\documentclass[11pt ,a4paper]{amsart}
\usepackage[T1,T2A]{fontenc}
\usepackage[utf8]{inputenc}
\usepackage[a4paper,margin=1in]{geometry}
\usepackage{fancyhdr}
\usepackage{graphicx}
\usepackage{tikz-cd}
\usepackage{mathrsfs}
\usepackage{amsthm}
\usepackage{amssymb}
\usepackage{amsmath}
\usepackage{mathtools}
\usepackage{bm}
\usepackage{todonotes}
\usepackage{adjustbox}
\usepackage{hyperref}
\hypersetup{colorlinks=true, citecolor=black, linkcolor=black}
\usepackage[capitalize, nameinlink]{cleveref}
\usepackage{pdftexcmds}
\usepackage{bbm} 
\usepackage[bbgreekl]{mathbbol}
\usepackage[toc,page]{appendix}
\usepackage[foot]{amsaddr}
\usepackage{enumitem}
\usepackage{xcolor}
\DeclareSymbolFontAlphabet{\mathbb}{AMSb}
\DeclareSymbolFontAlphabet{\mathbbl}{bbold}
\DeclareMathSymbol\bbDelta  \mathord{bbold}{"01}
\usepackage[backend=biber, style=alphabetic, maxnames=10, maxalphanames=3, minalphanames=3]{biblatex}
\makeatletter
\renewcommand{\subsection}{%
  \@startsection{subsection}{2}{\z@}%
  {-3.25ex \@plus -1ex \@minus -.2ex}%
  {-1em}%
  {\normalfont\large\itshape}}
\makeatother

\usetikzlibrary{arrows}
\theoremstyle{plain}

\makeatletter
\newcommand{\colim@}[2]{%
  \vtop{\m@th\ialign{##\cr
    \hfil$#1\operator@font colim$\hfil\cr
    \noalign{\nointerlineskip\kern1.5\ex@}#2\cr
    \noalign{\nointerlineskip\kern-\ex@}\cr}}%
}
\newcommand{\colim}{%
  \mathop{\mathpalette\colim@{\rightarrowfill@\textstyle}}\nmlimits@
}
\makeatother

\newtheorem{mthm}{Theorem}[section]
\newtheorem{mcor}[mthm]{Corollary}
\newtheorem{mlem}[mthm]{Lemma}
\newtheorem{mprop}[mthm]{Proposition}
\newtheorem{mques}[mthm]{Question}
\newtheorem*{mthm*}{Theorem}
\theoremstyle{definition}

\theoremstyle{remark}

\theoremstyle{definition}
\newtheorem{mdef}[mthm]{Definition}
\newtheorem{mcon}[mthm]{Construction}

\theoremstyle{remark}
\newtheorem*{mnot}{Notation}
\title{Collisions of Nilpotent Higgs Fields}
\date{\today.}
\author{Marius Kjærsgaard}

\renewcommand{\varphi}{\phi}

\renewcommand{\epsilon}{\varepsilon}

\begin{document}
\begin{abstract}
    We prove the non-characteristicness of the completed self-sum of the universal nilpotent cone in $T^*\mathrm{Bun}_G$ relative to the moduli of curves, for reductive groups $G$ of semi-simple rank $1$. In particular, this implies the non-characteristicness of the $\mathcal{H}om$-sheaf between Betti automorphic sheaves for groups of semi-simple rank $1$. We also construct an example of a collision of universal nilpotent Higgs fields, proving that the completed self-sum is not non-characteristic for $\mathrm{GL}_n$, $n\ge 3$, relative to the moduli of curves of genus $g\ge 4$. 
\end{abstract}
\maketitle
\setcounter{section}{0}

\section{Introduction}
Fix a reductive group $G$ over an algebraically closed field $k$ of characteristic $0$. Let $\text{Bun}_G$ be the moduli stack of $G$-bundles on the universal curve over the moduli stack $\mathcal{M}_g$ of genus $g$ curves. That is the moduli problem classifying a smooth proper family of genus $g$ curves $X\to S$ and a $G$-bundle on $X$.

Inside the relative cotangent stack $T_{\mathcal{M}_g}^*\mathrm{Bun}_G$, there is a closed conic Lagrangian $\mathrm{Nilp}_\mathrm{rel}$ classifying nilpotent Higgs fields. The universal nilpotent cone $\mathrm{Nilp}_u\subset T^*\text{Bun}_G$ is the unique closed conic Lagrangian lift of the relative nilpotent cone to the absolute cotangent space \cite{nadler2026globalnilpotentconeuniversal}. This object is central to the study of the Betti automorphic category under deformations of curves, see for example \cite{nadler2023automorphicgluingfunctorbetti} or \cite{færgeman2026localconstancybettiautomorphic}.

The aim of this text is to prove the following geometric property of $\mathrm{Nilp}_u$.
\begin{mprop}\label{non-characteristicness}
For $G$ a reductive group of semi-simple rank $1$, the completed sum of $\mathrm{Nilp}_u$ with itself
\[\mathrm{Nilp}_u\widehat{+} \mathrm{Nilp}_u\subset T^*\mathrm{Bun}_G\] 
is non-characteristic with respect to $\mathcal{M}_g$.
\end{mprop}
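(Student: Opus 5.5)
Non-characteristicness with respect to $\mathcal{M}_g$ means that the conic closed subset $\mathrm{Nilp}_u\widehat{+}\mathrm{Nilp}_u$ meets the conormal directions pulled back from $T^*\mathcal{M}_g$ only in the zero section. Concretely, it contains no covector of the form $d\pi^*(\eta)$ with $\eta\neq 0$, where $\pi:\mathrm{Bun}_G\to\mathcal{M}_g$ is the projection. I will first unwind the completed sum. Its points over a $G$-bundle $P$ on a curve $X$ are limits of sums $\xi_1+\xi_2$ with $\xi_i\in\mathrm{Nilp}_u$ lying over nearby points $P_1,P_2$ converging to $P$. The completion adds the directions where $|\xi_1|,|\xi_2|\to\infty$ while $\xi_1+\xi_2$ stays bounded.

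Next I use the description of $\mathrm{Nilp}_u$ from \cite{nadler2026globalnilpotentconeuniversal}. Over a nilpotent Higgs field $\phi\in H^0(X,\mathfrak{g}_P^*\otimes\omega)$, the fibre of $\mathrm{Nilp}_u$ is an affine lift of $\phi$ along $T^*\mathrm{Bun}_G\to T^*_{\mathcal{M}_g}\mathrm{Bun}_G$, where the lift is determined by $\phi$. The component along $T^*\mathcal{M}_g=H^0(X,\omega^{2})$ is a quadratic-type expression in $\phi$, essentially a "residual" term coming from deforming $(X,P)$ so that $\phi$ stays nilpotent. In semi-simple rank $1$, after passing to the derived group and the centre, the nonzero nilpotent $\phi$ give a canonical reduction to the Borel. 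Concretely, $P$ has a line subbundle $L$ and $\phi$ is a section of $L^{2}\otimes\omega$ up to twist, vanishing on a divisor $D$. I would compute the $\mathcal{M}_g$-component explicitly in these terms. It should be determined by $\phi$ together with the extension class, and it would vanish on pure relative directions.

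The key step concerns sequences $\xi_1^{(n)}+\xi_2^{(n)}\to\xi$ with $\xi$ lying in the pullback of $T^*\mathcal{M}_g$. The relative parts satisfy $\phi_1^{(n)}+\phi_2^{(n)}\to 0$. After rescaling by $\epsilon_n\to 0$ we obtain $\phi_1=-\phi_2=\phi$, both nilpotent at the same limit bundle $P$. The obstacle is to show that the absolute parts cancel to leading order. For this I would prove that in semi-simple rank $1$ the relation $\phi_1=-\phi_2$ nilpotent forces the Borel reductions of $P_1$ and $P_2$ to converge to the same reduction, since a nonzero nilpotent element of $\mathfrak{sl}_2$ has a unique Borel. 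This in turn forces the quadratic lifts to agree to first order, so the sum of their $\mathcal{M}_g$-components is $O(\epsilon_n)\cdot|\xi^{(n)}|$, and hence tends to $0$ after the relevant rescaling.

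I expect this convergence estimate to be the main difficulty. It needs a uniform control of the lift on the locus where the zero divisor $D$ of $\phi$ degenerates, and it is exactly where $\mathrm{GL}_n$ with $n\ge3$ fails, since there commuting nilpotents with different flags exist. I would first handle the stratum where $\phi$ has reduced zeros by a direct local computation. The remaining strata I would treat by a semicontinuity argument on the stratification of $\mathrm{Nilp}_\mathrm{rel}$ by the degree of $L$.
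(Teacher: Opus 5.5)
\textbf{There is a genuine gap.} Your treatment of the bounded case is fine: there the relative parts cancel and uniqueness of the lift gives $\eta=-\xi$. The unbounded case is where the argument breaks. You argue that the rescaled limits are $\pm\zeta$ with nonzero nilpotent relative part, so in semi-simple rank $1$ the two Borel reductions converge to the same one. You then say this ``in turn forces'' the lifts to agree to first order, so the $\mathcal{M}_g$-parts cancel. That second implication is asserted without argument, and convergence of the Borel reductions gives no rate compared with the size of the Higgs fields.

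The paper's $\mathrm{GL}_3$ example shows the implication fails as a mechanism:
\begin{itemize}
\item $v_0$ and $v_1$ are killed by the standard flag and its translate by $A\equiv 1 \bmod t$, so the flags coincide on the special fibre.
\item $t^2v_0$ and $t^2v_1$ converge to opposite nonzero nilpotents.
\item The base points agree, so the Kashiwara--Schapira condition holds trivially, and the relative part of $v_0+v_1$ is $O(t)$.
\item Yet $v_0+v_1$ has $\Omega$-component $-dz\neq 0$ on the special fibre.
\end{itemize}
Your bound $O(\epsilon_n)\,|\xi^{(n)}|$ with $\epsilon_n\sim|\xi^{(n)}|^{-1}$ is only $O(1)$, which is exactly what happens there. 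What you need is that the leading term vanishes, and that is the whole content of the proposition.

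Two further problems. First, the ``$\mathcal{M}_g$-component'' of a covector is not defined: $H^0(X,\Omega^{\otimes 2})$ is a subspace, not a quotient, of $H^0(X,\mathrm{At}(P)^\vee\otimes\Omega)$. Second, your description of $\widehat{+}$ omits the condition $|x_n-y_n|\,|\xi_n|\to 0$. Without it, adding covectors at different points is not chart-independent, and your argument never uses it.

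The missing ingredient is a rank-one mechanism that kills both fields while still seeing $\Omega$; the paper supplies it as follows.
\begin{enumerate}
\item Choose rational $B$-reductions on both branches via the Drinfeld compactification. Take a section $\mu$ where both are defined, and a single uniformization, so both come from two $B$-structures on one $G$-bundle on $T[[z]]$.
\item In semi-simple rank $1$, two $B$-structures are either equal or opposed. On the opposed locus $U'$ they refine to a $T$-structure, giving $\alpha(P_B^+)\cong(w\cdot\alpha)(P_B^-)$. This locus is dense but in general not fibrewise dense, precisely because the Borels may agree on the special fibre.
\item Lemma 5.5: the induced isomorphism of completed dual Atiyah bundles is a $\mathrm{dlog}$ of a unit of $R[[z]]_g$. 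After stripping powers of the uniformizer it extends to a fibrewise dense open $U$.
\item Coequalizing the two $\alpha$-pieces in the pushout of the two $B$-Atiyah quotients gives a quotient $D$ of $\mathrm{At}(P)^{\vee,c}$ with $0\to\Omega^c\to D\to D_{\mathrm{rel}}\to 0$ exact.
\item $v$ dies in $D$ on both branches, hence on $T_f[[z]]$ by reducedness. So a special fibre lying in $\Omega^{\otimes 2}$ must vanish.
\end{enumerate}
This $D$ replaces your undefined projection to $T^*\mathcal{M}_g$. Rank $1$ enters because two distinct Borels meet in a unique torus, leaving no choice of torus. It does not enter through uniqueness of the Borel of a nilpotent in $\mathfrak{sl}_2$. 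Working in the formal neighbourhood of a generic section also makes your stratification by the zero divisor of $\phi$ unnecessary.
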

Here $\widehat{+}$ denotes the completed sum of cones as defined by Kashiwara and Schapira, see \cite[Definition 6.2.3]{kashiwara1990sheaves} or Section 3 for a definition in the context of Artin stacks. Our main interest in this statement is the following consequence for automorphic sheaves.

\begin{mcor}\label{automorphic consequence}
    For $k=\mathbb{C}$, $G$ a reductive group of semi-simple rank 1 and any pair of automorphic sheaves $\mathcal{F},\mathcal{G}\in \mathrm{Sh}_{{\mathrm{Nilp}_u}}(\mathrm{Bun}_G)$, the sheaves $\mathcal{H}om(\mathcal{F},\mathcal{G})$ and $\mathcal{F}\otimes \mathcal{G}$
    are non-characteristic relative to $\mathcal{M}_g$.
\end{mcor}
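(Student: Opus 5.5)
We will deduce the corollary from \cref{non-characteristicness} using the standard microlocal estimates for tensor products and internal Hom, in the Kashiwara--Schapira form, transported to Artin stacks via smooth atlases. Let $p\colon \mathrm{Bun}_G\to\mathcal{M}_g$ be the projection. Being non-characteristic relative to $\mathcal{M}_g$ means that the singular support meets the conormal directions pulled back from the base, $\mathrm{Bun}_G\times_{\mathcal{M}_g}T^*\mathcal{M}_g\subset T^*\mathrm{Bun}_G$, only in the zero section. So it suffices to show
\[
\mathrm{SS}(\mathcal{H}om(\mathcal{F},\mathcal{G}))\subset \mathrm{Nilp}_u\widehat{+}\mathrm{Nilp}_u
\quad\text{and}\quad
\mathrm{SS}(\mathcal{F}\otimes\mathcal{G})\subset \mathrm{Nilp}_u\widehat{+}\mathrm{Nilp}_u ,
\]
and then invoke the proposition.

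\textbf{Reduction to schemes.} Choose a smooth atlas $U\to\mathrm{Bun}_G$; for quasi-compactness, first restrict to quasi-compact opens exhausting $\mathrm{Bun}_G$. Singular support on stacks is defined by pulling back along smooth maps. Smooth pullback commutes with $\otimes$ and with $\mathcal{H}om$ up to a shift, and it transforms conic subsets via the correspondence $T^*\mathrm{Bun}_G\times_{\mathrm{Bun}_G}U\to T^*U$. The completed sum defined in Section 3 is presumably set up so that it is compatible with this pullback. Granting that, the containments can be checked on $U$. There the cones $\Lambda=\mathrm{SS}(\mathcal{F}|_U)$ and $\Lambda'=\mathrm{SS}(\mathcal{G}|_U)$ are closed conic, and in fact Lagrangian, being contained in the pullback of $\mathrm{Nilp}_u$.

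\textbf{Applying Kashiwara--Schapira.} We use \cite[Corollary 6.4.5]{kashiwara1990sheaves}, or its analogue with $\widehat{+}$ and no transversality hypothesis:
\[
\mathrm{SS}(\mathcal{F}\otimes\mathcal{G})\subset \Lambda\widehat{+}\Lambda', \qquad \mathrm{SS}(\mathcal{H}om(\mathcal{F},\mathcal{G}))\subset \Lambda^{a}\widehat{+}\Lambda' .
\]
Here $\Lambda^a$ is the antipodal cone. Since $\mathrm{Nilp}_u$ is conic, it is stable under $\xi\mapsto-\xi$, so $\Lambda^a\subset\mathrm{Nilp}_u$ and $\Lambda^a$ can be replaced by $\Lambda$. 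For constructible sheaves (Betti setting, $k=\mathbb{C}$) these estimates hold without any non-characteristic hypotheses. We also use that $\widehat{+}$ is monotone in both arguments. Combining these gives both containments on $U$, and hence on $\mathrm{Bun}_G$.

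\textbf{Main obstacle.} The main point is to ensure that the estimate for $\mathcal{H}om$ is valid in this generality. The clean bound $\Lambda^a\widehat{+}\Lambda'$ for $\mathcal{H}om$ (via $\mathcal{H}om(\mathcal{F},\mathcal{G})=\delta^!(D\mathcal{F}\boxtimes\mathcal{G})$ up to twist) requires the $!$-pullback estimate along the diagonal. That estimate also holds for $\mathbb{R}$-constructible sheaves with the completed sum, which is exactly why $\widehat{+}$ appears rather than $+$. I would first check that Section 3's definition of $\widehat{+}$ on stacks matches the scheme-level Kashiwara--Schapira notion after smooth pullback, since the descent step depends on this. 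If $\mathrm{Sh}_{\mathrm{Nilp}_u}$ includes non-constructible (large) sheaves, I would reduce to compact objects and pass to colimits, using that $\otimes$ commutes with colimits. For $\mathcal{H}om$ I would argue directly, since the $!$-pullback bound holds for arbitrary sheaves by \cite[Theorem 6.3.1]{kashiwara1990sheaves}.
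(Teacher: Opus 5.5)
Your argument is correct and is essentially the paper's proof. The paper reduces smooth-locally to a scheme $U\to\mathrm{Bun}_G$, commutes $f^*$ past $\otimes$ and $\mathcal{H}om$ (using $f^!=f^*\otimes f^!(1)$ and the projection formula), applies the Kashiwara--Schapira $\widehat{+}$-estimates on $U$, and concludes from Proposition~\ref{non-characteristicness}. The compatibility of $\widehat{+}$ with smooth pullback that you only grant is exactly how the stacky completed sum is defined in Section 3 (Definition 3.7, justified by Lemma 3.6), so no extra check is needed there.
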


This corollary can be used to give a simpler proof of Nadler and Yun's local constancy conjecture, see \cite[Conjecture 4.4.3]{nadler2023automorphicgluingfunctorbetti}, in the case of semi-simple rank 1 groups. However, Proposition \ref{non-characteristicness} turns out to be a semi-simple rank 1 miracle.

We construct an example proving that Proposition \ref{non-characteristicness} is really a special property for semi-simple rank $1$ groups.

\begin{mprop}
    For $G=\mathrm{GL}_n$, with $n\ge 3$, the completed sum $\mathrm{Nilp}_u\widehat{+}\mathrm{Nilp}_u$ is not non-characteristic with respect to $\mathcal{M}_g$ for $g\ge 4$.
\end{mprop}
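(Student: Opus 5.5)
To prove the second proposition, I will exhibit a single point of $\mathrm{Nilp}_u\widehat{+}\mathrm{Nilp}_u$ lying in the conormal directions to the fibres of $\mathrm{Bun}_G\to\mathcal{M}_g$. Concretely, I will construct a covector $\xi\in T^*_{(X,E)}\mathrm{Bun}_G$ in the completed sum, nonzero, whose image in the relative cotangent $T^*_{\mathcal{M}_g}\mathrm{Bun}_G$ is zero. By definition, $\xi$ then comes from $T^*_X\mathcal{M}_g=H^0(X,\omega_X^{\otimes 2})$. The simplest source of such points is a \emph{collision}. Take two points $\xi_1,\xi_2\in \mathrm{Nilp}_u$ over the same $(X,E)$ whose relative (Higgs) parts are opposite nilpotent Higgs fields $\phi$ and $-\phi$. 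Their sum then has zero relative part, and its $\mathcal{M}_g$-component is the sum of the two quadratic-differential components. The completed sum contains the honest sum $\mathrm{Nilp}_u+\mathrm{Nilp}_u$, so it suffices to find such a pair with a nonzero total $\mathcal{M}_g$-component.

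The first step is to make the lift explicit. I will use the description from \cite{nadler2026globalnilpotentconeuniversal}: over a point $(X,E,\phi)$ of $\mathrm{Nilp}_\mathrm{rel}$, the absolute lifts are covectors $(\phi,q)$, where $q\in H^0(X,\omega_X^{\otimes2})$ is constrained by the Lagrangian condition. Generically, $q$ should be determined as a "secondary invariant" of the nilpotent Higgs field. For $\mathrm{GL}_n$ with $\phi^{n}=0$, I expect this to be a quadratic expression built from the filtration $\ker\phi\subset\ker\phi^2\subset\cdots$: a pairing $\phi^{k}\cdot\psi$, where $\psi$ is a choice of splitting data. At special strata, where the Lagrangian has several components, there are several possible $q$. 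The key point for producing a collision is that over $\phi$ and $-\phi$, or more generally over one $\phi$ lying on two different components of $\mathrm{Nilp}_u$, the $q$'s may differ. The difference $q_1-q_2$ is then a nonzero quadratic differential, and $(0,q_1-q_2)$ lies in the sum.

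The second step is the construction. I take $E=L_1\oplus L_2\oplus L_3\,(\oplus\mathcal{O}^{n-3})$ and $\phi$ strictly upper triangular, with entries $a\in H^0(L_2^{-1}L_1\omega)$ and $b\in H^0(L_3^{-1}L_2\omega)$ chosen so that $\phi^2\ne0$ only in the $(1,3)$ entry, equal to $ab$. The choice of degrees has two aims. First, the component of $\mathrm{Nilp}_u$ through $(E,\phi)$ should be non-unique, e.g.\ because $\phi$ can be deformed along two different Lagrangian branches, corresponding to the Jordan type $(3)$ versus $(2,1)$ near a degeneration. Second, the associated $q$'s should differ by a product such as $a\cdot c$ with $c\in H^0(L_3^{-1}L_1\omega)$ nonzero in $H^0(\omega^2)$. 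This is where rank $\ge3$ enters: in rank $2$, $\phi^2=0$ forces $q$ to be uniquely determined by $\phi$, which is consistent with the first proposition. The genus bound $g\ge4$ should come from a Riemann–Roch count ensuring that the required line bundles $L_i^{-1}L_j\omega$ have enough sections for $a$, $b$, $c$ to exist and be independent. I would choose $X$ non-hyperelliptic, and take degrees so that these sections exist while the relevant $H^1$-obstructions to lifting vanish.

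The third step is to verify membership. I must check that both $(\phi,q_1)$ and $(-\phi,-q_2)$ really lie in $\mathrm{Nilp}_u$. To do this, I will write each as the limit of conormal vectors to a family of substacks: for instance, the conormal to the locus of bundles admitting a given flag, lifted to the absolute cotangent via Kodaira–Spencer. Then $q$ is computed as the pairing of $\phi$ with the deformation of the flag. The main obstacle is exactly this identification of the absolute lift $q$ over non-generic nilpotent strata, since $\mathrm{Nilp}_u$ is defined abstractly as the unique conic Lagrangian lift. I would first attempt it by direct computation of the conormal bundles to the Harder–Narasimhan / flag-type strata in the universal family. I would then use closedness of $\mathrm{Nilp}_u$ to pass to the boundary point where the two branches meet.
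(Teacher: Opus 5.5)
There is a genuine gap, and it defeats the strategy itself. Your key step is to produce $(\phi,q_1),(\phi,q_2)\in\mathrm{Nilp}_u$ over the same $(X,E,\phi)$ with $q_1\neq q_2$. This is impossible. By the Nadler--Yun theorem quoted in Section 2, $\mathrm{Nilp}_u$ maps bijectively onto $\mathrm{Nilp}_{\mathrm{rel}}$, so a nilpotent Higgs field has a \emph{unique} absolute lift, on every stratum and in every rank. In particular, rank $\ge 3$ does not enter the way you suggest.

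It follows that the honest sum never meets the characteristic directions. Suppose $\xi_1,\xi_2\in\mathrm{Nilp}_u$ lie over the same $(X,E)$ and $\xi_1+\xi_2\in H^0(X,\Omega_X^{\otimes 2})$. Then their relative parts are $\phi$ and $-\phi$. Since $\mathrm{Nilp}_u$ is stable under scaling by $-1$ (the $B$-reduction kernel condition is linear), $-\xi_2\in\mathrm{Nilp}_u$ lifts the same $\phi$ as $\xi_1$. Uniqueness gives $-\xi_2=\xi_1$, so $\xi_1+\xi_2=0$.

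Passing to a boundary point via closedness of $\mathrm{Nilp}_u$ does not help. The limits of the two summands are again points of $\mathrm{Nilp}_u$ over a single base point, so you are back to an honest sum. Carried out correctly, your construction can only output the zero covector.

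The missing idea is the genuinely completed part of $\widehat{+}$: a limit in which the two nilpotent summands \emph{diverge} while their sum stays bounded. This is what the paper does. Over a DVR with uniformizer $t$, on the trivial $\mathrm{GL}_3$-bundle over $R[[z]]$, it takes:
\begin{itemize}
\item $v_0=(N_0,0)$, with $N_0$ strictly lower triangular with entries $t^{-1}(z-1)$, $-t^{-2}$, $t^{-1}z$, killed by the standard Borel;
\item $v_1=(N_1,-dz)$, killed by the standard Borel translated by $A$.
\end{itemize}
Since $A^{-1}dA$ has the single entry $t^2dz$ in position $(3,1)$, the absolute correction is $m_A(N_1)=t^{-2}\cdot t^2\,dz=dz$. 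This is finite because a pole of $N_1$ cancels a zero of the gauge transformation. Thus $v_0+v_1=(N,-dz)$ with $N\equiv 0 \bmod t$, which specializes to the nonzero characteristic covector $-dz$.

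The paper then algebraizes this on $\mathcal{O}(-1)\oplus\mathcal{O}^2$ over $\mathbb{P}^1_R$. It pulls back along a hyperelliptic genus-$4$ family $f\colon X\to\mathbb{P}^1_R$ with $f^*\mathcal{O}(3)\cong\Omega_{X/R}$; the hyperelliptic structure is exactly what lets the $\mathbb{P}^1$ construction be transported to curves.

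The limit lies in the completed sum via a collision over $\operatorname{Spec}R[x]/x(x-t^m)$, $m\ge 3$, with Higgs field $-t^mv_0+x(v_0+v_1)$. Its generic branches are $-t^mv_0$ and $t^mv_1$, both nilpotent, and its special fibre is $x\,v_s$, the Hamiltonian image of $v_s$. This unbounded-summands mechanism is what your proposal lacks.
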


The recent proof of Nadler and Yun's conjecture, see \cite{færgeman2026localconstancybettiautomorphic}, proves that $\Psi_*\mathcal{H}om(\mathcal{F},\mathcal{G})$ is a locally constant sheaf on $\mathcal{M}_g$, where $\Psi:\mathrm{Bun}_G\to \mathcal{M}_g$. However it does not establish Corollary \ref{automorphic consequence}. We do not know if Corollary \ref{automorphic consequence} is true for a general reductive group. 

\begin{mques}
    For any reductive group $G$ and any pair of automorphic sheaves $\mathcal{F},\mathcal{G}\in \mathrm{Sh}_{{\mathrm{Nilp}_u}}(\mathrm{Bun}_G)$, are the sheaves $\mathcal{H}om(\mathcal{F},\mathcal{G})$ and $\mathcal{F}\otimes \mathcal{G}$ non-characteristic relative to $\mathcal{M}_g$?
\end{mques}

\subsection{Lie algebra analogue}To motivate the argument for Proposition 1.1, let us first ignore the curves and consider the analogous construction for Lie algebras. Let $\mathfrak{g}$ be a Lie algebra, and let $N\subset \mathfrak{g}^\vee$ denote the nilpotent cone. For any $n\in N$, there exists a Borel subalgebra $\mathfrak{b}\subset \mathfrak{g}$ such that $n$ lies in the kernel of the natural map
\[
\mathfrak{g}^\vee\longrightarrow \mathfrak{b}^\vee.
\]
Given two elements $n,n'\in N$, we can choose potentially different Borel subalgebras $\mathfrak{b}$ and $\mathfrak{b}'$ that kill $n$ and $n'$, respectively. Since the intersection of any two Borel subalgebras contains a maximal torus, there exists a torus $\mathfrak{t}\subset \mathfrak{b}\cap\mathfrak{b}'$ such that $n+n'$ lies in the kernel of
\[
\mathfrak{g}^\vee\longrightarrow \mathfrak{t}^\vee.
\]
Thus, although the sum of two nilpotent elements need not be killed by a Borel subalgebra, it is always killed by a maximal torus.

In the context of Lie algebras, this statement does not give much information about the sum of the nilpotent cone with itself. For example, when $\mathfrak{g}=\mathfrak{sl}_2$, every element of $\mathfrak{sl}_2^\vee$ is killed by some maximal torus, reflecting the fact that every element of $\mathfrak{sl}_2^\vee$ is a sum of two nilpotent elements. However, once we introduce the curves back in the story, the analogous statement imposes a non-trivial condition on cotangent vectors arising from deformations of the curve. These are precisely the cotangent directions relevant to proving non-characteristicness.

\subsection{Collisions of absolute Higgs fields} 

The absolute cotangent space $T^*\mathrm{Bun}_G$ admits a moduli interpretation in terms of Atiyah bundles. Given a $T$-point of $\mathrm{Bun}_G$, represented by a $G$-bundle $P_G$ on a family of curves $X/T$, the cotangent space is identified with the space of global sections
\[
H^0(X,\mathrm{At}(P_G)^\vee\otimes\Omega_{X/T}),
\]
where $\mathrm{At}(P_G)^\vee$ denotes the dual Atiyah bundle associated to $P_G$ on $X/T$. We refer to such global sections $v$ as \textit{absolute Higgs fields}.

Similar to the case of Lie algebras, we say that an absolute Higgs field $v$ is \textit{nilpotent} if there exists a $B$-reduction $P_B$ of $P_G$ such that $v$ lies in the kernel of the natural map
\[
\mathrm{At}(P_G)^\vee\otimes\Omega_{X/T}
\longrightarrow
\mathrm{At}(P_B)^\vee\otimes\Omega_{X/T}.
\]
In the global case, however, we are not just interested in pointwise sums of nilpotent absolute Higgs fields. Instead, we consider so-called \textit{collisions of nilpotent absolute Higgs fields}. A collision of nilpotent absolute Higgs fields over a discrete valuation ring $R$ consists of a quadruple $(f,X,P_G,v)$, where $f\in\mathfrak{m}_R\setminus \{0\}$, $T_f=\operatorname{Spec}R[x]/(x(x-f))$, $X$ is a smooth proper curve over $T_f$, $P_G$ is a $G$-bundle on $X$, and
\[
v\in H^0(X,\mathrm{At}(P_G)^\vee\otimes\Omega_{X/T_f})
\]
is an absolute Higgs field such that the restriction of $v$ to each of the two irreducible components of $X$ is nilpotent.

\subsection{Idea of the proof}
Similarly to the case of Lie algebras, the nilpotency of $v$ on the generic fiber of each branch of $X$ implies the existence of two rational $B$-reductions of $P_G$. These reductions define quotients of $\mathrm{At}(P)^\vee \otimes \Omega_{X/T_f}$ that annihilate $v$ on their respective branches. Importantly, $\Omega_{X/T_f}^{\otimes 2}$ is a subbundle of both of these quotients.

We would then like to construct a further quotient vector bundle $D$ that factors through both of these quotients, such that $\Omega_{X/T_f}^{\otimes 2}$ is still a subbundle. However, the ambiguity in choosing a torus inside the intersection of the two Borels prevents us from defining such a quotient $D$ directly. Instead, we construct a quotient bundle $D_\mu$ of the dual Atiyah bundle over a fiberwise dense open subset of a formal neighborhood of a section $\mu$ of the family of curves. This formal quotient will still allow us to detect whether a collision of nilpotent absolute Higgs fields gives rise to a characteristic cotangent vector coming from $\mathcal{M}_g$.

By carefully choosing $\mu$ and a uniformization of the the data involved in the collision, the problem of constructing $D_\mu$ becomes independent of the family of curves $X$. We then collapse the formal branches of the completion of $X$ along $\mu$ to essentially reduce to a situation over $\mathrm{Spec }R[[z]]$. It turns out that in this formal setting the statement is true for $G$ of semi-simple rank 1, but false in general.

In fact, for $\mathrm{GL}_3$, considering formal collisions of Higgs fields leads to an example in which a collision of nilpotent absolute Higgs fields gives rise to characteristic cotangent vectors. In the last section, we construct this example explicitly for $\mathrm{GL}_3$ over a family of curves of genus $4$.

\subsection{Acknowledgements} I would like to thank Dustin Clausen for many discussions of mathematics in general and Joakim Færgeman for his collaboration on closely related topics.

\section{Dual Atiyah Bundles and The Universal Nilpotent Cone}
In this section we set up the basic properties of dual Atiyah bundles. Throughout this section $G$ is any affine algebraic group scheme over $\operatorname{Spec } k$, not necessarily reductive. Let $T$ be a base scheme over $\operatorname{Spec} k$, $X\to T$ be a scheme over $T$ and $P_G$ be a $G$-bundle on $X$.

Since the map $p:P_G\to X$ is smooth, we obtain a short exact sequence of quasi-coherent sheaves 
\[0\to p^* \Omega_{X/T}\to \Omega_{P_G/T}\to \Omega_{P_G/X}\to 0\]
on $P$. The functorial properties of Kähler differentials and the product formula provide descent data for this sequence along the map $P_G\to [P/G]\simeq X$. Let
\[0\to \Omega_{X/T}\to \mathrm{At}(P_G)^\vee \to \mathfrak{g}_{P_G}^\vee\to 0\]
denote the resulting sequence of quasi-coherent sheaves on $X$.

\begin{mdef}
    The \textit{dual Atiyah bundle} associated to $P_G$ on $X$ relative to $T$ is the quasi-coherent sheaf $\mathrm{At}(P_G)^\vee$ in the above sequence.
\end{mdef}

Note that in this generality the dual Atiyah bundle is not a vector bundle since $\Omega_{X/T}$ is not. As the notation suggests, the sheaf $\mathfrak{g}_{P_G}^\vee$ is isomorphic to the coadjoint bundle associated to $P_G$.

\begin{mlem}
    The quasi-coherent sheaf $\mathfrak{g}_P^\vee$ is isomorphic to the dual of the adjoint bundle associated to $P$.
\end{mlem}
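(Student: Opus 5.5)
We need to show that $\Omega_{P_G/X}$, descended along $P_G\to X$, is the coadjoint bundle $\mathfrak{g}_{P_G}^\vee=P_G\times^G\mathfrak{g}^\vee$. The idea is to trivialize $\Omega_{P_G/X}$ using the action of $G$, and then identify the resulting $G$-equivariant structure with the coadjoint one.

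First I reduce to the universal case. The action map gives an isomorphism $a\colon P_G\times G\xrightarrow{\sim} P_G\times_X P_G$, $(p,g)\mapsto(p,pg)$. Under this isomorphism the second projection $\mathrm{pr}_2\colon P_G\times_X P_G\to P_G$ corresponds to $a$ itself, and the first projection corresponds to $\mathrm{pr}_1\colon P_G\times G\to P_G$. Kähler differentials commute with base change, so pulling $\Omega_{P_G/X}$ back along $p$ gives
\[
p^*\Omega_{P_G/X}\simeq \Omega_{P_G\times G/P_G}\simeq \mathcal{O}_{P_G}\otimes_k \Omega_{G/k}
\]
on $P_G$. This is a statement over $P_G$, which is fppf (indeed smooth) over $X$.

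Next I trivialize $\Omega_{G/k}$ by left-invariant forms. The map $\mathcal{O}_G\otimes_k \mathfrak{g}^\vee\xrightarrow{\sim}\Omega_{G/k}$ is the standard isomorphism. It comes from the shear map $G\times G\to G\times G$, $(g,h)\mapsto (g,gh)$, together with $\Omega_{G/k}|_e=\mathfrak{m}_e/\mathfrak{m}_e^2=\mathfrak{g}^\vee$. Combining this with the first step gives a canonical isomorphism $\Omega_{P_G/X}\simeq \mathcal{O}_{P_G}\otimes\mathfrak{g}^\vee$ on $P_G$. More intrinsically, this is the dual of the infinitesimal action map $\mathcal{O}_{P_G}\otimes\mathfrak{g}\to T_{P_G/X}$, and it is an isomorphism because $P_G\to X$ is a $G$-torsor.

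It remains to compare descent data, and this is the main point. The descent datum on $\Omega_{P_G/X}$ is the one coming from functoriality of differentials under the right $G$-action $r_g\colon P_G\to P_G$. I need to check that, under the trivialization, the isomorphism $r_g^*\Omega_{P_G/X}\simeq\Omega_{P_G/X}$ becomes $\mathrm{Ad}^*(g)$ on $\mathfrak{g}^\vee$. To see this, note that the fundamental vector field of $\xi\in\mathfrak{g}$ satisfies
\[
(r_g)_*\xi^\#=(\mathrm{Ad}_{g^{-1}}\xi)^\#,
\]
because $p\exp(t\xi)g=pg\exp(t\,\mathrm{Ad}_{g^{-1}}\xi)$. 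Functorially in points, the same identity holds for $g\in G(S)$ acting on $P_G\times S$, with $\xi$ a dual-numbers point. Dualizing this identity shows that the equivariant structure on $\mathcal{O}_{P_G}\otimes\mathfrak{g}^\vee$ is the diagonal one, with the coadjoint action on the second factor. This is exactly the descent datum defining $P_G\times^G\mathfrak{g}^\vee$, and that bundle is the dual of $\mathfrak{g}_{P_G}=P_G\times^G\mathfrak{g}$.

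The main obstacle is bookkeeping: getting the left/right conventions and the inverse in $\mathrm{Ad}_{g^{-1}}$ consistent, so that the result is the coadjoint representation and not its contragredient twisted by an inverse. I would fix the conventions once by doing the computation for the trivial torsor $P_G=X\times G$. Since $\mathfrak{g}^\vee$ is finite-dimensional, the descended sheaf is locally free, and the cocycles agree fppf-locally. Any torsor is fppf-locally trivial, so the identification with the coadjoint bundle follows.
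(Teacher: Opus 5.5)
Your proposal is correct and is essentially the paper's argument: trivialize the torsor, identify $\Omega_{P_G/X}$ with $\mathfrak{g}^\vee$ via invariant forms, and check that the gluing data is the coadjoint action. You work over the tautological cover $P_G\to X$ and verify the $\mathrm{Ad}_{g^{-1}}$ twist explicitly through fundamental vector fields, whereas the paper trivializes over opens $U\subseteq X$ and reads off the coadjoint action from changes of trivialization (one small slip: $\Omega_{P_G\times G/P_G}$ is $a^*\Omega_{P_G/X}$, a sheaf on $P_G\times G$ rather than on $P_G$, but your intrinsic description via the infinitesimal action map gives the intended trivialization on $P_G$).
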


\begin{proof}
Let $U\to X$ be an open on which $P_G$ is trivializable. Choosing a trivialization $P_G|_U\simeq G\times U$ provides an isomorphism of $\mathfrak{g}_{P_G}^\vee$ with the vector bundle of $G$-invariant differential forms on $G\times U$. This is precisely $\mathfrak{g}^\vee\times U$. Changing the trivialization of $P_G$ over $U$ acts by pulling back $G$-invariant relative differential forms, which is the same as the dual of the adjoint action on $\mathfrak{g}^\vee\times U$. Hence the gluing data for $\mathfrak{g}_{P_G}^\vee$ is exactly the gluing data for the dual adjoint bundle associated to $P_G$.
\end{proof}

We will only apply the definition of the dual Atiyah bundle directly in the case where $X/T$ is a smooth proper curve, in which case $\Omega_{X/T}$ is a vector bundle and hence the dual Atiyah bundle is also a vector bundle. However, we will also want an analogue of the dual Atiyah bundle for $G$-bundles on a formal neighborhood of a section $\mu:T\to X$. In this case the algebraic dual Atiyah bundle would not be a vector bundle or even a coherent sheaf. Instead we define the completed dual Atiyah bundle in the following setting. Let $T=\mathrm{Spec }R$ be the spectrum of a local ring, $X/T$ be a smooth curve and $\mu:T\to X$ be a section.  Let $\Gamma_\mu$ denote the corresponding closed subscheme and $I$ be the ideal sheaf on $X$ defining $\Gamma_\mu$. Consider the diagram
\[V(I)\to V(I^2)\to V(I^3)\to \cdots\]
Since $V(I)=T$ is an affine scheme, each scheme in the above diagram is a nilpotent extension of an affine scheme and hence affine. Let $X\widehat{_\mu}$ denote the colimit of the above diagram in the category of affine schemes. Let $X\widehat{_\mu}=\mathrm{Spec } R'$ and equip $R'$ with the inverse limit topology. Let $i_\mu:X\widehat{_\mu}\to X$ denote the natural map. Since $T$ is the spectrum of a local ring, we may choose an open neighborhood $U$ of $\mu$ such that $U\to T$ factors through an étale morphism $U\to \mathbb{A}^1_T$. By the infinitesimal lifting criterion for étale morphisms, choosing such an étale morphism provides an isomorphism $X\widehat{_\mu}\simeq \mathrm{Spec }R[[z]]$ induced by an isomorphism of topological rings $R'\cong R[[z]]$.

\begin{mdef}
    The \textit{completed Kähler differentials} $\Omega_{X\widehat{_\mu}/T}^c$ is the quasi-coherent sheaf on $X\widehat{_\mu}$ given by the Hausdorff completion of the algebraic Kähler differentials $\Omega_{R'/R}$ as a topological $R'$-module, see \cite[Section 20.4]{EGAIV4}. For any open subscheme $U\subseteq X\widehat{_\mu}$, let $\Omega_{U/T}^c$ denote the pullback of $\Omega_{X\widehat{_\mu}/T}^c$ to $U$.
\end{mdef}

The completed Kähler differentials form a line bundle for any $U\subseteq X\widehat{_\mu}$, since a choice of isomorphism of topological rings $R'\simeq R[[z]]$ induces an isomorphism on completed Kähler differentials. For $R[[z]]$ the complete Kähler differentials is the trivial line bundle $R[[z]]dz$.  There is a natural map of quasi-coherent sheaves $\Omega_{U/T}\to \Omega_{U/T}^\mathrm{c}$. Given a $G$-bundle $P_G$ over $U$, let
\[0\to \Omega_{U/T}^\mathrm{c}\to \mathrm{At}(P_G)^{\vee, c}\to \mathfrak{
g}_{P_G}^\vee \to 0\]
denote the exact sequence induced by the algebraic Atiyah sequence and the map $\Omega_{U/T}\to \Omega_{U/T}^\mathrm{c}$. 
\begin{mdef}
    For an open subscheme $U\subseteq X\widehat{_\mu}$ and a $G$-bundle $P_G$ over $U$, the \textit{completed dual Atiyah bundle} is the vector bundle $\mathrm{At}(P_G)^{\vee, c}$ on $U\subseteq X\widehat{_\mu}$.
\end{mdef}

We now have the following compatibility between the algebraic dual Atiyah bundle of a $G$-bundle on a curve and the completed dual Atiyah bundle of its restriction to a formal neighborhood of a section of the curve.

\begin{mprop}
    Let $T$ be the spectrum of a local ring, $X\to T$ a smooth curve, $\mu:T\to X$ a section and let $i_\mu:X\widehat{_\mu}\to X$ be the inclusion of the formal completion of $X$ at the graph of $\mu$. For any $G$-bundle $P_G$ on $X$, the map
    \[i_\mu^* \mathrm{At}(P_G)^\vee \to \mathrm{At}(i_\mu^* P_G)^{\vee,c}\]
    is an isomorphism.
\end{mprop}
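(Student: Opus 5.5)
The plan is to compare the two short exact sequences and conclude with the five lemma. Pulling back the algebraic Atiyah sequence of $P_G$ along $i_\mu$ gives
\[0\to i_\mu^*\Omega_{X/T}\to i_\mu^*\mathrm{At}(P_G)^\vee\to i_\mu^*\mathfrak{g}_{P_G}^\vee\to 0.\]
This sequence stays exact because $\mathfrak{g}_{P_G}^\vee$ is a vector bundle, so the sequence is locally split. The completed sequence for $i_\mu^*P_G$ on $X\widehat{_\mu}$ is
\[0\to \Omega^c_{X\widehat{_\mu}/T}\to \mathrm{At}(i_\mu^*P_G)^{\vee,c}\to \mathfrak{g}^\vee_{i_\mu^*P_G}\to 0.\]
The map in the statement comes from the functoriality of Kähler differentials along $i_\mu^*P_G\to P_G$ over $X\widehat{_\mu}\to X$, followed by the completion map. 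I will first check that it is compatible with both outer terms, so that we get a map of short exact sequences.

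\textbf{Step 1: the right-hand terms.} The right vertical map $i_\mu^*\mathfrak{g}_{P_G}^\vee\to \mathfrak{g}^\vee_{i_\mu^*P_G}$ is an isomorphism. Relative differentials of a $G$-torsor along its base are computed by the lemma above as the coadjoint bundle, and that identification is compatible with base change. Concretely, $\Omega_{P/X}$ commutes with base change along $X\widehat{_\mu}\to X$, and the descent data match.

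\textbf{Step 2: the left-hand terms.} This is the main point: the map $i_\mu^*\Omega_{X/T}\to \Omega^c_{X\widehat{_\mu}/T}$ is an isomorphism. I would first shrink $X$ to an open neighbourhood $U$ of $\mu$ with an étale map $U\to\mathbb{A}^1_T$; this is harmless since $i_\mu$ factors through $U$. Étaleness gives $\Omega_{U/T}\cong\mathcal{O}_U\,dz$, so $i_\mu^*\Omega_{U/T}\cong R[[z]]\,dz$. Under the identification $R'\cong R[[z]]$, the completed differentials are also $R[[z]]\,dz$. Indeed, $\Omega_{R[[z]]/R}$ is huge, but its Hausdorff completion for the inverse limit topology is $\varprojlim \Omega_{(R[z]/z^n)/R}$, which is free on $dz$.

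It remains to see that the comparison map sends $dz$ to $dz$. This holds because $z$ on $U$ restricts to the formal coordinate $z$, by the construction of the isomorphism via infinitesimal lifting. So the map is the identity on $R[[z]]\,dz$. The care needed is in making the completion of $\Omega_{R'/R}$ precise. I would compute it as $\varprojlim_n \Omega_{R'/R}/I^n\Omega_{R'/R}$ and note that $\Omega_{R'/R}/I^n$ agrees with the differentials of $R[z]/z^n$ modulo $z^{n-1}$-type corrections, which vanish in the limit.

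\textbf{Step 3: conclusion.} Both rows are exact, since the completed sequence is exact by definition, being induced from the algebraic one. The outer vertical maps are isomorphisms by Steps 1 and 2, and the diagram commutes by naturality of the Kähler sequences. The five lemma then shows that $i_\mu^*\mathrm{At}(P_G)^\vee\to\mathrm{At}(i_\mu^*P_G)^{\vee,c}$ is an isomorphism. In fact, the completed sequence is precisely the pushout of the algebraic sequence of $i_\mu^*P_G$ along $\Omega\to\Omega^c$, so the whole argument reduces to the left-hand comparison in Step 2.
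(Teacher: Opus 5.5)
Your proposal is correct and follows essentially the same route as the paper. You compare the pulled-back algebraic Atiyah sequence with the completed one, identify the coadjoint terms, and reduce via an étale coordinate $U\to\mathbb{A}^1_T$ to the fact that $i_\mu^*\Omega_{X/T}$ and $\Omega^c_{X\widehat{_\mu}/T}$ are both free on $dz$; your identification of the Hausdorff completion of $\Omega_{R'/R}$ with $\varprojlim_n \Omega_{(R'/I^n)/R}$ usefully spells out a detail the paper leaves implicit.
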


\begin{proof} Consider the diagram
\[
\begin{tikzcd}
0 \arrow[r] & i_\mu^*\Omega_{X/T} \arrow[d] \arrow[r]        & i_\mu^*\mathrm{At}(P_G)^\vee \arrow[d] \arrow[r]  & i_\mu^* \mathfrak{g}_{P_G}^\vee \arrow[d, "\simeq"] \arrow[r]   & 0 \\
0 \arrow[r] & \Omega_{X\widehat{_\mu}/T} \arrow[r] \arrow[d] & \mathrm{At}(i_\mu^* P_G)^\vee \arrow[r] \arrow[d] & \mathfrak{g}_{i_\mu^* P_G}^\vee \arrow[r] \arrow[d, "="] & 0 \\
0 \arrow[r] & \Omega_{X\widehat{_\mu}/T}^c \arrow[r]         & {\mathrm{At}(i_\mu^* P_G)^{\vee,c}} \arrow[r]     & \mathfrak{g}_{i_\mu^* P_G}^\vee \arrow[r]                     & 0
\end{tikzcd}\]
Thus it suffices to prove that $i_\mu^* \Omega_{X/T}\to \Omega_{X\widehat{_\mu}/T}^c$ is an isomorphism. Since $T$ is the spectrum of a local ring, we may assume that $X$ admits an étale morphism $X\to \mathbb{A}^1_T$ over $T$. Then we have a diagram
\[
\begin{tikzcd}
X\widehat{_\mu} \arrow[d] \arrow[r, "\simeq"] & (\mathbb{A}^1_T)\widehat{_0} \arrow[d] \\
X \arrow[r]                                   & \mathbb{A}^1_T                        
\end{tikzcd}\]
Hence, since $X\to \mathbb{A}^1_T$ is étale and thus induces an isomorphism on Kähler differentials and the top map is induced by an isomorphism of topological rings, we are reduced to proving the statment for $(\mathbb{A}^1_T)\widehat{_0}\to \mathbb{A}^1_T$. In this case the statement follows as both modules of differential forms are free on the $1$-form $dz$. 
\end{proof}

The final property of dual Atiyah bundles that we need is the following proposition describing the automorphism of the dual Atiyah bundle induced by an automorphism of the trivial $\mathbb{G}_m$-bundle.

\begin{mprop}
    Let $U/T$ be a scheme, $f$ be an invertible function on $U$ and $\mathbb{G}_{m,U}\to \mathbb{G}_{m,U}$ the corresponding automorphism of the trivial $\mathbb{G}_m$-bundle over $U$. Then the induced map $\mathrm{At}(\mathbb{G}_{m,U})^\vee\overset{\sim}{\to} \mathrm{At}(\mathbb{G}_{m,U})^\vee$ is given by $\mathrm{At}(\mathbb{G}_{m,U})^\vee=\mathcal{O}_U\oplus \Omega_{U/T}\overset{\sim}{\to} \mathcal{O}_U\oplus \Omega_{U/T}=\mathrm{At}(\mathbb{G}_{m,U})^\vee$ where the middle map is given by the matrix
    \[\begin{pmatrix}
        1& \mathrm{dlog} f\\
        0& 1
    \end{pmatrix}.\]
\end{mprop}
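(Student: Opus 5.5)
We compute directly from the definition of the dual Atiyah bundle as descent of $\Omega_{P/T}$ along $P\to X$. Write the trivial bundle as $P=\mathbb{G}_{m,U}=U\times\mathbb{G}_m$ with fibre coordinate $t$. The relative differentials are
\[\Omega_{P/T}=p^*\Omega_{U/T}\oplus \mathcal{O}_P\,dt.\]
The $\mathbb{G}_m$-invariant sections are generated over $\mathcal{O}_U$ by $p^*\Omega_{U/T}$ together with the invariant form $\mathrm{dlog}\, t=dt/t$. This gives the identification
\[\mathrm{At}(\mathbb{G}_{m,U})^\vee\simeq \mathcal{O}_U\cdot \mathrm{dlog}\, t\oplus \Omega_{U/T}.\]
Here the summand $\Omega_{U/T}$ is the image of $p^*\Omega_{U/T}$, and $\mathcal{O}_U\,\mathrm{dlog}\,t$ maps isomorphically onto $\mathfrak{g}^\vee_{P}=\mathcal{O}_U$. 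This identification is the splitting of the Atiyah sequence induced by the trivialization, and it fixes the ordering $\mathcal{O}_U\oplus\Omega_{U/T}$ used in the statement.

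Next, the automorphism $\phi:(u,t)\mapsto (u,f(u)t)$ of $P$ is $\mathbb{G}_m$-equivariant, so pullback $\phi^*$ preserves invariant forms. It therefore induces the map on $\mathrm{At}^\vee$; we take pullback of forms as the induced map, and the inverse convention only changes $f$ to $f^{-1}$, i.e. the sign. We evaluate $\phi^*$ on the two generators.

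\begin{itemize}
\item On a form $\omega\in \Omega_{U/T}$ pulled back from $U$, we have $\phi^*p^*\omega=p^*\omega$, since $p\circ\phi=p$.
\item On the fibre generator, $\phi^*(\mathrm{dlog}\, t)=\mathrm{dlog}(ft)=\mathrm{dlog}\, t+p^*\mathrm{dlog} f$.
\end{itemize}

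Hence in the basis $(\mathrm{dlog}\,t,\ \Omega_{U/T})$ the map sends $(a,\omega)\mapsto (a,\ \omega + a\,\mathrm{dlog} f)$. Acting on column vectors $(a,\omega)^T$, this is the matrix $\begin{pmatrix}1&0\\ \mathrm{dlog} f&1\end{pmatrix}$. Written with the convention of the statement, where the off-diagonal entry records the component $\mathcal{O}_U\to\Omega_{U/T}$, this is the displayed upper triangular matrix $\begin{pmatrix}1&\mathrm{dlog} f\\0&1\end{pmatrix}$. The diagonal entries are $1$ because the map respects the Atiyah sequence and is the identity on both $\Omega_{U/T}$ and $\mathfrak{g}^\vee$, the latter since $\mathbb{G}_m$ is abelian.

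The only real subtlety is bookkeeping: checking that the descent identification of invariant forms matches the splitting $\mathcal{O}_U\oplus\Omega_{U/T}$ in the statement, and fixing the pullback versus pushforward convention. Both affect only the placement and sign of the $\mathrm{dlog} f$ entry, not the substance of the computation.
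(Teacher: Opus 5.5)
Your proposal is correct and follows essentially the same route as the paper. Both identify $\mathrm{At}(\mathbb{G}_{m,U})^\vee$ with the invariant forms spanned by forms pulled back from $U$ together with $\mathrm{dlog}\, t$, and then observe that the automorphism fixes the former and sends $\mathrm{dlog}\, t$ to $\mathrm{dlog}\, t+\mathrm{dlog} f$. Your extra bookkeeping is a harmless clarification the paper leaves implicit: the displayed upper triangular matrix corresponds to acting on row vectors in the ordering $\mathcal{O}_U\oplus\Omega_{U/T}$.
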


\begin{proof}
    We immediately reduce to the case where $U$ is affine say $U=\mathrm{Spec } R'$, thus it suffices to prove the statement on global sections. The global sections of $\mathrm{At}(\mathbb{G}_{m,U})^\vee $ is the $R'$-module of global $\mathbb{G}_{m,U}$-invariant differential forms on $\mathbb{G}_{m,U}$ relative to $T$. This module is the direct sum of the differential forms pulled back from $U$ and multiples of the differential form $\mathrm{dlog} z$, where $z$ denotes the coordinate on $\mathbb{G}_m$. The automorphism induced by $f$ preserves the submodule of forms pulled back from $U$ and it sends $\mathrm{dlog} z$ to $\mathrm{dlog} fz=\mathrm{dlog} z + \mathrm{dlog} f$. Thus the map of algebraic dual Atiyah bundles is given by the matrix above.
\end{proof}

In the case where $U\subseteq X\widehat{_\mu}$ is an open subscheme, the analogous statement for completed dual Atiyah bundles is also true, where the exterior derivative is defined as the composition $\mathcal{O}_U\overset{d}{\to}\Omega_{U/T}\to \Omega_{U/T}^c$.

Given a smooth Artin stack $\mathcal{X}$, let $T^*\mathcal{X}$ denote the naive cotangent stack of $\mathcal{X}$. The relevance of Atiyah bundles to us comes from the following proposition.

\begin{mprop}
    The stack $T^*\mathrm{Bun}_G$ is the moduli space that to an affine test object $T$ assoicates the groupoid of a family of curves $X/T$, a $G$-bundle $P$ on $X$ and a global section $v\in H^0(X,\mathrm{At}(P)^\vee \otimes \Omega_{X/T})$.
\end{mprop}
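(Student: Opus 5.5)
The plan is to identify the cotangent space of $\mathrm{Bun}_G$ at a $T$-point with the dual of its tangent complex, and to compute that tangent complex by deformation theory of the pair (curve, bundle). Fix an affine $T$ and a $T$-point $(X/T,P)$ of $\mathrm{Bun}_G$. First-order deformations of the pair over $T[\epsilon]$ are governed by the sheaf of $G$-invariant vector fields on $P$ relative to $T$. That sheaf is exactly the Atiyah bundle $\mathrm{At}(P)$, the dual of the sheaf constructed above. It sits in the sequence
\[
0\to \mathfrak{g}_P\to \mathrm{At}(P)\to T_{X/T}\to 0,
\]
which is dual to the defining sequence of $\mathrm{At}(P)^\vee$, using the Lemma identifying $\mathfrak{g}_P^\vee$ with the coadjoint bundle. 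The argument is the standard one: deforming $P\to X$ together with its $G$-action is locally trivial, because $P$ is smooth and locally trivial and $X/T$ is smooth. Infinitesimal automorphisms are $G$-invariant vector fields, and gluing produces the Čech description. Hence the tangent complex of $\mathrm{Bun}_G$ at this point is $R\Gamma(X,\mathrm{At}(P))[1]$: $H^0$ gives infinitesimal automorphisms, $H^1$ gives deformations, and $H^2=0$ since $X/T$ is a relative curve.

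Next apply relative Serre duality for the smooth proper curve $X/T$ of relative dimension one. The cotangent complex, restricted to this point, is dual to the tangent complex, so
\[
\mathbb{L}_{\mathrm{Bun}_G}\simeq R\Gamma(X,\mathrm{At}(P))[1]^\vee\simeq R\Gamma(X,\mathrm{At}(P)^\vee\otimes\Omega_{X/T}).
\]
Here I use that $\mathrm{At}(P)$ is a vector bundle, which holds because $\Omega_{X/T}$ is one. The naive cotangent stack $T^*\mathcal{X}$ is by definition $\mathrm{Spec}_{\mathcal{X}}\mathrm{Sym}\, H^0(\mathbb{T}_{\mathcal{X}})$, equivalently the stack whose $T$-points are sections of the degree-$0$ part of $\mathbb{L}_{\mathcal{X}}|_T$. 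So the fiber over $(X,P)$ is the $H^0$ term $H^0(X,\mathrm{At}(P)^\vee\otimes\Omega_{X/T})$. This identification must be functorial in $T$, so that it glues into an isomorphism of stacks over $\mathrm{Bun}_G$.

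The main obstacle is base-change compatibility. The cotangent fiber is a derived object, and we need it to commute with pullback along arbitrary maps $T'\to T$; naive $H^0$ does not do this in general. I would handle it as follows. The complex $R\Gamma(X,\mathrm{At}(P)^\vee\otimes\Omega_{X/T})$ is perfect of amplitude $[0,1]$ and commutes with base change, since $X/T$ is flat and proper and the sheaf is a vector bundle. The $H^0$ of the pullback of a perfect complex of amplitude $[0,1]$ to $T'$ is computed by the kernel of a map of vector bundles, after locally representing the complex by a two-term complex $E^0\to E^1$. Such a kernel is exactly the functor of points of the total space of the corresponding cone, namely $\mathrm{Spec}\,\mathrm{Sym}\,\mathrm{coker}((E^1)^\vee\to(E^0)^\vee)$. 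Therefore $T'$-points of $T^*\mathrm{Bun}_G$ over $(X_{T'},P_{T'})$ are exactly elements of $H^0(X_{T'},\mathrm{At}(P_{T'})^\vee\otimes\Omega_{X_{T'}/T'})$. The remaining ingredient is that the dual Atiyah sequence is compatible with base change in $T$, which follows from its construction via Kähler differentials. Together with the deformation computation, these two facts finish the proof.
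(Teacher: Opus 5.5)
Your proposal is correct and follows essentially the paper's route. Both arguments identify the $T$-points of the naive cotangent stack with the dual of the first-order deformations of $(X,P)$, note that these are governed by $\mathrm{At}(P)$, and apply Serre duality; the paper packages the middle step as deforming $X$ over $BG\times T$, with $T_{X/BG\times T}^\vee$ obtained by descending $\Omega_{P_G/T}$ along $P_G\to X$. Your complex-level formulation and explicit base-change check make precise the functoriality in $T$ that the paper leaves implicit.
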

\begin{proof}
    For an affine test scheme $T$, the $T$-points of the naive cotangent stack  $T^*\mathrm{Bun}_G$ over a given $T$-point $T\to \mathrm{Bun}_G$ is identified with the dual of the module of the first order deformations. The deformation problem associated to a $T$-point of $\mathrm{Bun}_G$ given by a pair of a curve $X/T$ and a $G$-bundle $P_G$ on $X$ is equivalent to the problem of deforming $X$ over $BG\times T$. Since $X\to BG \times T$ is smooth, the first order deformations of this problem is classified by $H^1(X,T_{X/BG\times T})$ and hence, by Serre duality and since $T$ is affine, the dual module is identified with $H^0(X,T_{X/BG\times T}^\vee \otimes \Omega_{X/T})$. We can identify $T_{X/BG\times T}^\vee$ by descending along the map $T\to BG\times T$ and see that it is exactly the quasi-coherent sheaf on $X$ obtained by descending $T_{P_G/T}^\vee=\Omega_{P_G/T}$ along the map $P_G\to X$. Hence $T_{X/BG\times T}^\vee$ is the dual Atiyah bundle $\mathrm{At}(P_G)^\vee$ and thus the $T$-points of $T^*\mathrm{Bun}_G$ over $(X/T,P_G)$ is given by the global sections of $H^0(X,\mathrm{At}(P_G)^\vee \otimes \Omega_{X/T})$.
\end{proof}

Motivated by the proposition above we make the following definition.
\begin{mdef}
    An \textit{absolute Higgs field} on a $G$-bundle $P_G$ over a curve $X/T$ is a global section $v\in H^0(X,\mathrm{At}(P)^\vee \otimes \Omega_{X/T})$. 
\end{mdef}

We end this section by defining the universal nilpotent cone inside $T^*\mathrm{Bun}_G$. Let
\[H_\mathrm{rel}:T^*_{\mathcal{M}_g}\mathrm{Bun}_G\to \mathrm{Hitch}_G\]
denote the relative Hitchin fibration. The relative nilpotent cone $\mathrm{Nilp}_\mathrm{rel}$ is the closed conical substack given by the $H_\mathrm{rel}^{-1}(\mathbf{0})$, where $\mathbf{0}$ denotes the zero-section of the Hitchin base.

\begin{mthm}[\cite{nadler2023automorphicgluingfunctorbetti}]
    There exist a unique closed, conical, Lagrangian subset $\mathrm{Nilp}_u\subset T^*\mathrm{Bun}_G$ which maps bijectively to $\mathrm{Nilp}_\mathrm{rel}$ under the projection $T^*\mathrm{Bun}_G\to T_{\mathcal{M}_g}^*\mathrm{Bun}_G$.
\end{mthm}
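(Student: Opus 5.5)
The plan is to construct $\mathrm{Nilp}_u$ explicitly as the locus of nilpotent absolute Higgs fields, i.e.\ those $v\in H^0(X,\mathrm{At}(P_G)^\vee\otimes\Omega_{X/T})$ annihilated by $\mathrm{At}(P_G)^\vee\otimes\Omega_{X/T}\to\mathrm{At}(P_B)^\vee\otimes\Omega_{X/T}$ for some $B$-reduction $P_B$. I would then check, in order, that this locus is closed and conical, that it maps bijectively onto $\mathrm{Nilp}_\mathrm{rel}$, that it is Lagrangian, and finally that any other closed conical Lagrangian with the bijectivity property coincides with it.

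\emph{Construction and bijectivity.} Comparing the Atiyah sequences for $P_B$ and $P_G$, the $\Omega_{X/T}$ parts agree. Hence the kernel of $\mathrm{At}(P_G)^\vee\to\mathrm{At}(P_B)^\vee$ is identified with $(\mathfrak{g}/\mathfrak{b})^\vee_{P_B}$, and this maps injectively into $\mathfrak{g}^\vee_{P_G}$. So a relative nilpotent Higgs field $\phi$ that is killed by $P_B$ has exactly one lift killed by $P_B$, namely its image in $(\mathfrak{g}/\mathfrak{b})^\vee_{P_B}\otimes\Omega$ viewed inside the Atiyah bundle. Conversely, the image of a nilpotent absolute Higgs field is nilpotent, so the projection $\mathrm{Nilp}_u\to\mathrm{Nilp}_\mathrm{rel}$ is surjective.

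For injectivity one must show that this lift does not depend on the choice of $P_B$. Closedness should follow from properness of the stack of $B$-reductions with a Higgs field in the kernel, using the Drinfeld-type compactification of rational reductions, which reduces to generic reductions over the generic point of $X$. Conicity is clear.

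\emph{Lagrangian property.} I would stratify $\mathrm{Bun}_G$ by the locally closed substacks $S_\alpha$ of bundles admitting a $B$-reduction of a given type, following the Laumon/Ginzburg description. Fiberwise, $\mathrm{Nilp}_\mathrm{rel}$ is the union of closures of the relative conormals to these strata. Each $S_\alpha$ is smooth over $\mathcal{M}_g$, so the absolute conormal $T^*_{S_\alpha}\mathrm{Bun}_G$ projects isomorphically onto the relative conormal. I would then identify the absolute lifts constructed above with these absolute conormals on a dense open. This gives that $\mathrm{Nilp}_u$ is a union of closures of absolute conormals, hence Lagrangian, and a dimension count confirms it.

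\emph{Uniqueness.} Let $\Lambda$ be any closed conical Lagrangian. Each of its irreducible components is the closure of a conormal $T^*_Z\mathrm{Bun}_G$ to a smooth locally closed $Z$. If $\Lambda$ maps bijectively onto $\mathrm{Nilp}_\mathrm{rel}$, the relative conormals of these $Z$ must be exactly the components of $\mathrm{Nilp}_\mathrm{rel}$. Injectivity forces $Z\to\mathcal{M}_g$ to be generically smooth; otherwise the fibers of $T^*_Z\to T^*_{\mathcal{M}_g,Z}$ would be positive-dimensional. This pins down $Z=S_\alpha$ generically, and taking closures gives $\Lambda=\mathrm{Nilp}_u$.

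\emph{Main obstacle.} I expect the hardest step to be the independence of the lift from the choice of $B$-reduction, together with bijectivity along the boundaries of the closures. For a single relative field, two different $B$-reductions $P_B,P_B'$ killing $\phi$ could a priori produce lifts differing by a section of $\Omega^{\otimes2}_{X/T}$. This is the same phenomenon that the collision analysis later in the paper is designed to detect, since the difference class measures cotangent directions coming from $\mathcal{M}_g$.

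What I would try first is to compare the two reductions over the generic point, where they are related by an element of $G(K)$ lying in a Bruhat cell. I would then compute, via the preceding proposition on $\mathrm{dlog} f$, the effect on the Atiyah bundle. The aim is to show that the $\Omega^{\otimes2}$ component of the difference vanishes because $\phi$ is killed by both Borels: the induced $\mathrm{dlog}$ terms pair against a component of $\phi$ that lies in the nilradical of $\mathfrak{b}\cap\mathfrak{b}'$, and that component vanishes.
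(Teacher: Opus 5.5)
The paper does not prove this theorem. It cites Nadler--Yun and only records that the field-valued points of $\mathrm{Nilp}_u$ are exactly the nilpotent absolute Higgs fields, so your construction is the right one. Your closedness idea is the valuative argument of Lemma 4.4. Note that the stack of pairs $(P_B,v)$ is not itself proper: the limiting reduction is only rational and must be saturated.

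The step you single out as the main obstacle is short and holds for every $G$. Trivialize generically so that $P_B$ is the standard reduction. By the Bruhat decomposition over $K(X)$, the second reduction is $gB$ with $g=b\dot w$, where $b\in B(K(X))$ and $\dot w$ is constant. The two lifts then differ by $\pm\langle\phi,dg\,g^{-1}\rangle=\pm\langle\phi,db\,b^{-1}\rangle$. This vanishes because $db\,b^{-1}$ is $\mathfrak b$-valued and $\phi\in\mathfrak b^\perp$. This is also not the phenomenon the collisions of Section 4 detect: those add two different fields, and that genuinely fails for $\mathrm{GL}_3$ (Section 6).

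\textbf{The gap: the Lagrangian step.} Your uniqueness argument also rests on it. The claim that $\mathrm{Nilp}_\mathrm{rel}$ is fiberwise the union of closures of conormals to loci $S_\alpha$ defined by the type of a $B$-reduction (or to Harder--Narasimhan strata) is false.

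\begin{itemize}
\item For $\mathrm{GL}_2$ and $g\ge 2$ there are components other than the zero section whose generic point lies over a stable bundle. They consist of fields $E\to E/L\to L\otimes\Omega_X\subset E\otimes\Omega_X$ with $\deg L<\tfrac12\deg E$ and $L^{\otimes 2}\otimes\det E^{-1}\otimes\Omega_X$ effective of degree $<g$.
\item The image of such a component in $\mathrm{Bun}$ is a proper closed (``wobbly'') locus. It is cut out by this effectivity condition, not by the existence of a reduction of given type.
\item Its generic point lies in the open semistable stratum, whose conormal is the zero section.
\end{itemize}

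Even a correct identification on dense opens would not suffice. The projection $T^*\mathrm{Bun}_G\to T^*_{\mathcal M_g}\mathrm{Bun}_G$ is not proper, so a bijection onto the pure-dimensional $\mathrm{Nilp}_\mathrm{rel}$ does not exclude extra lower-dimensional components of $\mathrm{Nilp}_u$ over special curves. Excluding them is the real content of the theorem, and a dimension count does not do it.

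\textbf{What works instead.}

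(i) Isotropy follows directly from your definition. Since $H^0(\mathrm{At}(P_G)^\vee\otimes\Omega)\to H^0(\mathrm{At}(P_B)^\vee\otimes\Omega)$ is $dq_\lambda^*$ for $q_\lambda:\mathrm{Bun}_B^\lambda\to\mathrm{Bun}_G$ over the universal curve, you have $\mathrm{Nilp}_u=\bigcup_\lambda\{(q_\lambda(y),\xi): dq_\lambda^*\xi=0\}$. Stratify $\mathrm{Bun}_B^\lambda$ so that $q_\lambda$ is smooth onto smooth locally closed $Z_i$. This puts $\mathrm{Nilp}_u$ inside $\bigcup_i T^*_{Z_i}\mathrm{Bun}_G$.

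(ii) For purity, show that $\mathrm{Nilp}_u\to\mathrm{Nilp}_\mathrm{rel}$ is closed, hence a homeomorphism. Work over a DVR.
\begin{itemize}
\item Take any global lift $v'$ of the relative field; this exists since $H^1(X,\Omega^{\otimes 2}_{X/R})=0$.
\item Extend the generic Borel to a fiberwise dense open $O$ as in Lemma 4.4. On $O$ the lift killed by the Borel exists.
\item Hence $v_\eta-v'_\eta\in H^0(X_\eta,\Omega^{\otimes2})$ is regular at the generic point of $X_s$, and by Hartogs it extends over the regular surface $X$.
\item The resulting extension is nilpotent on $X_s$ by the argument of Lemma 4.4.
\end{itemize}
Thus $\mathrm{Nilp}_u$ is pure of dimension $\dim\mathrm{Bun}_G$, like $\mathrm{Nilp}_\mathrm{rel}$, and by (i) it is Lagrangian.

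(iii) For uniqueness, recall that an irreducible closed conic Lagrangian is the closure of the conormal to the smooth locus of its image in $\mathrm{Bun}_G$. The components of any bijective Lagrangian lift correspond to components of $\mathrm{Nilp}_\mathrm{rel}$ with the same image in $\mathrm{Bun}_G$. So any two such lifts coincide, and no strata are needed.
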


From Nadler and Yun's construction of $\mathrm{Nilp}_u$, it follows that the field valued points of $\mathrm{Nilp}_u$ can be described as follows: A $k$-point of $T^*\mathrm{Bun}_G$ given by an absolute Higgs field $v$ on a $G$-bundle $P_G$ on a curve $X/\operatorname{Spec }k$ defines a $k$-point of $\mathrm{Nilp}_u$ if and only if there exist a $B$-reduction of $P_G$ such that $v$ is in the kernel of the map
\[
H^0(X,\mathrm{At}(P_G)^\vee\otimes\Omega_{X/T})
\longrightarrow
H^0(X,\mathrm{At}(P_B)^\vee\otimes\Omega_{X/T}).
\]

\section{The Completed Sum of Cones for Smooth Artin Stacks}
Let $\mathcal{X}$ be a smooth Artin stack and $\mathcal{N},\mathcal{N}'\subseteq T^*\mathcal{X}$ be closed conical subsets of the cotangent space of $\mathcal{X}$. Here $T^*\mathcal{X}$ denotes the naive cotangent stack, in particular beware that $T^*\mathcal{X}$ is in general not a vector bundle over $\mathcal{X}$ but only the affine cone associated to a coherent sheaf. In this section we will generalize the \textit{completed sum} $\mathcal{N}\widehat{+}\mathcal{N}'\subseteq T^*\mathcal{X}$, as defined by Kashiwara and Schapira in the case of smooth manifolds, to the context of smooth Artin stacks. 

We start by recalling the definition of the completed sum for $X$ a smooth variety. In this case the symplectic structure on $T^*X$ defines an isomorphism $T^*T^*X\simeq TT^*X$ of vector bundles over $T^*X$. The map on cotangent spaces induced by $T^*X\to X$ gives a map $T^*X\times_X T^*X\to T^*T^*X$ over $T^*X$, where $T^*X\times_X T^*X\to T^*X$ is given by projection to the first factor. 
\begin{mdef}
    The Hamiltonian embedding $H_X:T^*X\hookrightarrow TT^*X$ is the closed immersion over $X$ given by the composition $T^*X\to T^*X\times_X T^*X\to  T^*T^*X\simeq TT^*X$, where the first map is given $(0,\mathrm{id}_{T^*X})$.
\end{mdef}

This is the definition used in Kashiwara-Schapira, the definition that will be more useful to us is the moduli description of the Hamiltonian embedding. Consider the map $G_X:T^*X\to TT^*X$ defined by the following moduli interpretation. A $T$-valued point of $T^*X$ is a pair $(x,v)$, where $x:T\to X$ and $v\in H^0(T, x^* \Omega_{X}^1)$. Under the map $G_X$ the $T$-valued point $(x,v)$ is mapped to the $T$-valued point of $TT^*X$ given by the $T[\varepsilon]$-valued point of $T^*X$ given by $(x_\varepsilon,\varepsilon v)$, where $x_\varepsilon:T[\varepsilon]\to X$ is the map $T[\varepsilon]\to T\to X$ and 
\[\varepsilon v\in H^0(T,x^*\Omega_X^1)\oplus \varepsilon H^0(T,x^*\Omega_X^1)\simeq H^0(T,x_\varepsilon^*\Omega_X^1).\]
We will now prove that the map $G_X$ is in fact the Hamiltonian embedding.

\begin{mlem}
    For $X$ a smooth algebraic variety, the maps $G_X$ and $H_X$ are equal.
\end{mlem}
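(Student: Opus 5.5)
Both $G_X$ and $H_X$ are maps of schemes $T^*X\to TT^*X$ over $T^*X$, where $TT^*X\to T^*X$ is the tangent bundle projection. So it suffices to show that they define the same section of the vector bundle $TT^*X\to T^*X$. The statement is local on $X$ and both constructions are compatible with restriction to opens. I would therefore reduce to the case where $X$ is affine with étale coordinates $x_1,\dots,x_n:X\to\mathbb{A}^n$. Then $T^*X\cong X\times\mathbb{A}^n$ with fiber coordinates $\xi_1,\dots,\xi_n$, so that a point $(x,v)$ has $v=\sum\xi_i\,dx_i$. Both maps are natural in étale maps, or one can simply compute with the induced coordinates. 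In these coordinates $TT^*X$ is trivialized by the basis $\partial_{x_i},\partial_{\xi_i}$, and $T^*T^*X$ by $dx_i,d\xi_i$.

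First I compute $G_X$. The $T[\varepsilon]$-point $(x_\varepsilon,\varepsilon v)$ has constant base coordinates $x_i$. Its fiber coordinates are $\xi_i+\varepsilon\xi_i$; that is, I read $\varepsilon v$ as the point $v+\varepsilon v$ of $H^0(T,x^*\Omega)\oplus\varepsilon H^0(T,x^*\Omega)$, which is the intended tangent vector based at $v$. So $G_X(x,\xi)$ is the tangent vector $\sum_i\xi_i\,\partial_{\xi_i}$. This is the Euler vector field of the conic structure.

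Next I compute $H_X$. The map $T^*X\times_X T^*X\to T^*T^*X$ is the pullback of forms $\pi^*:\pi^*\Omega_X\to\Omega_{T^*X}$. Applied to $(0,\mathrm{id})$, it sends $(x,\xi)$ to the covector $\sum\xi_i\,dx_i$, which is the tautological one-form $\lambda$ at that point. It remains to apply the isomorphism $T^*T^*X\simeq TT^*X$ induced by the symplectic form $\omega=d\lambda=\sum d\xi_i\wedge dx_i$. I must be careful with the contraction convention $\iota_Y\omega=\pm\lambda$ used in Kashiwara--Schapira. With $\iota_Y\omega=\lambda$ and $Y=\sum\xi_i\partial_{\xi_i}$, we get $\iota_Y\omega=\sum\xi_i\,dx_i$. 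So $H_X$ is also the Euler vector field.

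The main obstacle is purely one of conventions. The sign of $\omega$ and the direction of the identification $T^*T^*X\simeq TT^*X$ must be fixed so that $\lambda$ corresponds to $+$ the Euler field rather than its negative. I would state explicitly the convention $\omega=d\lambda$ and $Y\mapsto\iota_Y\omega$, matching \cite{kashiwara1990sheaves}. If the opposite sign convention is forced, the two maps differ by the automorphism $-1$ of the fibers; this still makes them agree as closed immersions up to that sign, and the sign is irrelevant for the conic sets used later.

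Finally, I would note that the equality of the maps can be checked on $T$-points for arbitrary affine $T$. The coordinate computation above is functorial in $T$, so this completes the proof.
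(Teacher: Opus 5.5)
There is a genuine gap: you have misidentified both maps, so your computation shows that two maps agree which are not $G_X$ and $H_X$. Neither map is a section of $TT^*X\to T^*X$. The paper defines $H_X$ as a closed immersion over $X$, and it lands over the zero section. In $(0,\mathrm{id}_{T^*X})\colon T^*X\to T^*X\times_X T^*X$ the first factor is the base point, since the paper specifies that $T^*X\times_X T^*X\to T^*X$ is the first projection. So $H_X(x,v)$ is the image under $T^*T^*X\simeq TT^*X$ of the covector $\pi^*v$ at the point $(x,0)$ of the zero section, not the image of $\lambda$ at $(x,v)$.

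Likewise, $\varepsilon v$ is the element $0+\varepsilon v$ of $H^0(T,x^*\Omega^1_X)\oplus\varepsilon H^0(T,x^*\Omega^1_X)$, whose reduction modulo $\varepsilon$ is the zero covector. Reading it as $(1+\varepsilon)v$ changes the map. Writing $v=\sum_i v_i\,dx_i$, both maps actually send $(x,v)$ to the vertical vector $\sum_i v_i\,\partial_{\xi_i}$ based at $(x,0)$. This is the paper's formula $(x,v)\mapsto(x,0,0,v)$, and its proof uses explicitly that the composite $T^*\mathbb{A}^n_k\to TT^*\mathbb{A}^n_k\to T^*\mathbb{A}^n_k$ is zero.

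The difference from your Euler field is not cosmetic. On the fibre $TT^*X$ over $0$, the map $P_{T^*X}$ is the bundle projection followed by the diagonal, and the closure of $O_{T^*X}\cap P_{T^*X}^{-1}(\mathcal{N}\times\mathcal{N}')$ lies in the closed set $P_{T^*X}^{-1}(\mathcal{N}\times\mathcal{N}')$. So with the Euler field, the preimage in the definition of $\mathcal{N}\widehat{+}\mathcal{N}'$ would be contained in $\mathcal{N}\cap\mathcal{N}'$, rather than giving the Kashiwara--Schapira completed sum.

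The repair keeps your strategy. At $(x,0)$ the covector $\pi^*v=\sum_i v_i\,dx_i$ corresponds under $Y\mapsto\iota_Y\omega$, with $\omega=\sum_i d\xi_i\wedge dx_i$, to $Y=\sum_i v_i\,\partial_{\xi_i}$, which is also what the paper's identification gives. The $T[\varepsilon]$-point with base coordinates $x_i$ and fibre coordinates $\varepsilon v_i$ is the same tangent vector. Computing directly in étale coordinates, so that $x_i,\xi_i$ are étale coordinates on $T^*X$, is then a legitimate substitute for the paper's reduction along an étale $g\colon X\to\mathbb{A}^n_k$. That reduction requires checking separately that $H_X$ and $G_X$ are both compatible with $dg$.
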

\begin{proof}
    Since the problem can be checked Zariski locally on $X$, we may assume that $X$ admits an étale morphism $g:X\to \mathbb{A}_k^n$. Then we have the following diagram
    \[
\begin{tikzcd}
T^*X \arrow[r, "H_X"]                                                                                                     & TT^*X                                                          \\
T^*\mathbb{A}^n_k\times_{\mathbb{A}^n_k} X \arrow[r, "H_{\mathbb{A}^n_k}\times_{0_{\mathbb{A}^n_k}} 0_X"] \arrow[u, "dg"] & T(T^*\mathbb{A}^n_k\times_{\mathbb{A}^n_k} X) \arrow[u, "Tdg"]
\end{tikzcd}\]
    Since $g$ is étale, the vertical maps are isomorphisms. To see that the diagram commutes, note that by construction the symplectic form on $T^*X$ is identified with the pullback of the standard symplectic form on $T^*\mathbb{A}^n_k$. The commutativity then follows since the composition $T^*\mathbb{A}^n_k\to TT^*\mathbb{A}^n_k \to T^*\mathbb{A}^n_k$ of the Hamiltonian embedding and the projection to $T^*\mathbb{A}^n_K$ is the zero map of vector bundles over $\mathbb{A}^n_k$. So the composition $T^*\mathbb{A}^n_k\times_{\mathbb{A}^n_k}X\to TT^*\mathbb{A}^n_k \times_{\mathbb{A}^n_k}X\simeq T(T^*\mathbb{A}^n_k \times_{\mathbb{A}^n_k}X)\simeq  TT^*\mathbb{A}^n_k\times_{T\mathbb{A}^n_k} TX$, where the first map is given by $H_{\mathbb{A}^n_k}\times \mathrm{id}_X$, is equal to $H_{\mathbb{A}^n_k}\times_{0_{\mathbb{A}^n_k}} 0_X$.

    It is easy to see from the moduli description of $G_X$ that the analogous diagram with $H_X$ replaced by $G_X$ also commutes, see the start of the proof of Lemma 3.4 below for an argument. Thus since the vertical maps are isomorphisms, it suffices to show that $H_{\mathbb{A}_k^n}$ is equal to $G_{\mathbb{A}_k^n}$.

    In the case $X=\mathbb{A}^n_k$, the Hamiltonian isomorphism is given by 
    \[T^*T^*\mathbb{A}^n=\mathbb{A}^n\times \mathbb{A}^{n,\vee}\times (\mathbb{A}^n\times \mathbb{A}^{n,\vee})^\vee\simeq \mathbb{A}^n\times \mathbb{A}^{n,\vee}\times \mathbb{A}^{n,\vee}\times \mathbb{A}^{n,\vee\vee}\simeq \mathbb{A}^n\times \mathbb{A}^{n,\vee}\times \mathbb{A}^n \times \mathbb{A}^{n,\vee}=TT^*\mathbb{A}^n\]
Thus the Hamiltonian embedding sends a point $(x,v)\in T^*\mathbb{A}^n$ to the point $(x,0,0,v)\in \mathbb{A}^n\times \mathbb{A}^{n,\vee}\times \mathbb{A}^n \times \mathbb{A}^{n,\vee}=TT^*\mathbb{A}^n$. On the other hand consider $(x,v)$ as the $k$-point on $T^*\mathbb{A}^n$ given by the map $x:k[x_1,\dots,x_n]\to k$ and the element $v$ in the $k$-vector space generated by $dx_1,\dots,dx_n$. By the moduli description of $G_X$, this is mapped to the $k[\varepsilon]$-point of $T^*\mathbb{A}^n$ given by $x:k[x_1,\dots,x_n]\to k\to k[\varepsilon]$ and the element $\varepsilon v$ in the free $k[\varepsilon]$-module generated by $dx_1,\dots,dx_n$. Modding out with $\varepsilon$ we see that the projection to the first two factors of $\mathbb{A}^n\times \mathbb{A}^{n,\vee}\times \mathbb{A}^n \times \mathbb{A}^{n,\vee}=TT^*\mathbb{A}^n$ is given by $(x,0)$. Looking at the coefficents of $\varepsilon$ it follows that the projection to the two last factors is given by $(0,v)$. Hence the two maps are equal for $\mathbb{A}^n$.
\end{proof}

Note that the moduli description of the Hamiltonian embedding extends to any smooth Artin stack $\mathcal{X}$. To be precise let $T^*\mathcal{X}$ denote the naive cotangent stack of $\mathcal{X}$, let $\Omega_\mathcal{X}^1$ denote the naive sheaf of differential 1-forms and let $TT^*\mathcal{X}$ denote the mapping stack $\mathrm{Map}(k[\varepsilon],T^*\mathcal{X})$. Then we have a morphism $T^*\mathcal{X}\to TT^*\mathcal{X}$ given by sending a $T$-point of $T^*\mathcal{X}$ given by a pair $(x,v)$, where $x:T\to \mathcal{X}$ and $v\in H^0(T,x^* \Omega_\mathcal{X}^1)$, to the $T$-point of $TT^*\mathcal{X}$ given by the $T[\varepsilon]$-point $(x_\varepsilon,\varepsilon v)$ of $T^*\mathcal{X}$, where $x_\varepsilon$ is the map $T[\varepsilon]\to T\to \mathcal{X}$ and
\[\varepsilon v\in H^0(T,x^*\Omega_\mathcal{X}^1)\oplus \varepsilon H^0(T,x^*\Omega_\mathcal{X}^1)\cong H^0(T[\varepsilon],x_\varepsilon^* \Omega^1_\mathcal{X}).\]

\begin{mdef}
    For $\mathcal{X}$ a smooth Artin stack, define the Hamiltonian embedding of $\mathcal{X}$ to be the morphism $H_\mathcal{X}:T^*\mathcal{X}\to TT^*\mathcal{X}$ described above.
\end{mdef}

Given a smooth morphism $f:\mathcal{Y}\to \mathcal{X}$ of smooth Artin stacks, we obtain a correspondence
\[T^*\mathcal{Y}\overset{df}{\hookleftarrow}T^*\mathcal{X}\times_{\mathcal{X}}\mathcal{Y}\overset{f_t}{\to} T^*\mathcal{X}\]
Where the map $df$ is a closed immersion. The Hamiltonian embedding has the following functoriality in smooth morphisms.

\begin{mlem}
    Let $f:\mathcal{Y}\to \mathcal{X}$ be a smooth morphism of smooth Artin stacks. The diagram
\[
\begin{tikzcd}
T^*\mathcal{Y} \arrow[r, "H_{\mathcal{Y}}"]                                                                                            & TT^*\mathcal{Y}                                                         \\
T^*\mathcal{X}\times_{\mathcal{X}}\mathcal{Y} \arrow[u, "df", hook] \arrow[r, "H_{\mathcal{X}}\times_{0_\mathcal{Y}} 0_{\mathcal{X}}"] & T(T^*\mathcal{X}\times_{\mathcal{X}}\mathcal{Y}) \arrow[u, "Tdf", hook]
\end{tikzcd}\]
    is Cartesian.
\end{mlem}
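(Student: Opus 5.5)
We prove the lemma in two steps: first the diagram commutes, then it is Cartesian. Both reduce to the moduli description of $H_\mathcal{X}$ and $H_\mathcal{Y}$, together with the fact that $Tdf$ is the map induced on $k[\varepsilon]$-mapping stacks by the closed immersion $df$.

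\emph{Commutativity.} Take a $T$-point $(y,w)$ of $T^*\mathcal{X}\times_\mathcal{X}\mathcal{Y}$, where $y:T\to\mathcal{Y}$ and $w\in H^0(T,(f\circ y)^*\Omega^1_\mathcal{X})$. Its image under $df$ is $(y,df(w))$, with $df(w)\in H^0(T,y^*\Omega^1_\mathcal{Y})$. Applying $H_\mathcal{Y}$ gives the $T[\varepsilon]$-point $(y_\varepsilon,\varepsilon\, df(w))$.

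Going the other way, $H_\mathcal{X}\times_{0_\mathcal{Y}}0_\mathcal{X}$ sends $(y,w)$ to the $T[\varepsilon]$-point $(y_\varepsilon,\varepsilon w)$ of $T^*\mathcal{X}\times_\mathcal{X}\mathcal{Y}$. Here the $\mathcal{Y}$-component is the constant deformation $y_\varepsilon$, i.e.\ the zero section of $T\mathcal{Y}$ over $y$, and it lies over the constant deformation of $f\circ y$. Applying $Tdf$, which is just $df$ evaluated on $T[\varepsilon]$-points, gives $(y_\varepsilon,df(\varepsilon w))$.

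These two outputs agree because $df$ is $\mathcal{O}$-linear and commutes with base change along $T[\varepsilon]\to T$. Hence $df(\varepsilon w)=\varepsilon\, df(w)$ under the identification $H^0(T[\varepsilon],y_\varepsilon^*\Omega^1)\cong H^0\oplus\varepsilon H^0$.

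\emph{Cartesianness.} We compare $T$-points of the fiber product $T^*\mathcal{Y}\times_{TT^*\mathcal{Y}}T(T^*\mathcal{X}\times_\mathcal{X}\mathcal{Y})$ with $T$-points of $T^*\mathcal{X}\times_\mathcal{X}\mathcal{Y}$. A point of the fiber product consists of three pieces of data:
\begin{itemize}
\item a point $(y,u)$ of $T^*\mathcal{Y}$;
\item a $T[\varepsilon]$-point $(\tilde y,\tilde w)$ of $T^*\mathcal{X}\times_\mathcal{X}\mathcal{Y}$;
\item an isomorphism $(\tilde y,df(\tilde w))\simeq(y_\varepsilon,\varepsilon u)$ of $T[\varepsilon]$-points of $T^*\mathcal{Y}$.
\end{itemize}
The isomorphism trivializes the deformation $\tilde y\simeq y_\varepsilon$. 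Transporting along it, we may take $\tilde y=y_\varepsilon$.

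Next write $\tilde w=w_0+\varepsilon w_1$. The condition becomes $df(w_0)=0$ and $df(w_1)=u$.

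Since $f$ is smooth, the map $f^*\Omega^1_\mathcal{X}\to\Omega^1_\mathcal{Y}$ is injective. For the naive cotangent complexes, this is the statement that $H^0$ of $\mathbb{L}_f$ being locally free and $\mathbb{L}_f$ being concentrated in degree $0$ makes $H^1$ of the triangle injective. This injectivity is the same fact that makes $df$ a closed immersion, and it holds after base change to $T$ and to $T[\varepsilon]$. Therefore $w_0=0$, and $u=df(w_1)$ is determined by $w_1$.

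Thus the fiber product is identified with $\{(y,w_1)\}=T^*\mathcal{X}\times_\mathcal{X}\mathcal{Y}$ via the bottom-left corner, which proves the square is Cartesian.

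\emph{Main obstacle.} The delicate point is the groupoid level. Both $T^*\mathcal{Y}$ and $TT^*\mathcal{Y}$ are stacks, so the fiber product carries the extra datum of the isomorphism and also its automorphisms. We must check two things:
\begin{itemize}
\item the space of isomorphisms $\tilde y\simeq y_\varepsilon$ lying over a fixed point is torsor-free after fixing the trivialization, i.e.\ no extra automorphisms appear;
\item injectivity of $df$ holds after the non-flat base change along $T[\varepsilon]\to T$.
\end{itemize}
We handle both by working smooth-locally. We choose a smooth atlas $U\to\mathcal{X}$ and a smooth atlas $V\to\mathcal{Y}\times_\mathcal{X}U$. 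We then use that the naive cotangent stacks and $T(-)$ are compatible with these base changes, and that the square for schemes is Cartesian by the computation above. Finally, Cartesianness descends along smooth covers.
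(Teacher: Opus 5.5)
Your proposal is correct and is essentially the paper's proof. Commutativity comes from the moduli descriptions of $H_{\mathcal{X}}$ and $H_{\mathcal{Y}}$ together with $\varepsilon$-linearity of $df$, and Cartesianness comes from writing the lifted covector as $w_0+\varepsilon w_1$ and using injectivity of $df$ to force $w_0=0$ and $u=df(w_1)$.

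The paper avoids your groupoid-level worries and the smooth-atlas reduction. It simply notes that $df$ and $Tdf$ are monomorphisms (closed immersions), so Cartesianness amounts to checking that a $T$-point of $T^*\mathcal{Y}$ factors through $df$ iff its image under $H_{\mathcal{Y}}$ factors through $Tdf$. That same monomorphism property is also all the injectivity you need. Your cotangent-complex explanation of it is garbled: the relevant vanishing is $H^{-1}(\mathbb{L}_f)=0$, and $\mathbb{L}_f$ need not sit in degree $0$ when $f$ is not representable.
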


\begin{proof}
    To see that the diagram commutes, consider a $T$-point on $T^*\mathcal{X}\times_{\mathcal{X}}\mathcal{Y}$ given by $y:T\to \mathcal{Y}$ and $v\in H^0(T,x^*\Omega_{\mathcal{X}}^1)$, where $x:T\to \mathcal{X}$ is the composition of $y$ and $f$. Let $y$ and $v'\in H^0(T,y^*\Omega_\mathcal{Y}^1)$ denote the image of this $T$-point under $df$. Then $df$ followed by $H_\mathcal{Y}$ is given by the $T[\varepsilon]$-point of $T^*\mathcal{Y}$ given by $y_\varepsilon:T[\varepsilon]\to \mathcal{Y}$ and $\varepsilon v'\in H^0(T[\varepsilon],y_\varepsilon^*\Omega_\mathcal{Y}^1)$. Under the other composition the $T$-point is first mapped to the $T[\varepsilon]$-point of $T^*\mathcal{X}\times_{\mathcal{X}}\mathcal{Y}$ given by $y_\varepsilon$ and $\varepsilon v\in H^0(T[\varepsilon],x_\varepsilon^* \Omega_\mathcal{X}^1)$. Now the pullback map $H^0(T[\varepsilon],x_\varepsilon^*\Omega_\mathcal{X}^1)\to H^0(T[\varepsilon],y_\varepsilon^*\Omega_\mathcal{Y}^1)$ maps $\varepsilon v$ to $\varepsilon v'$ hence giving the commutativity.

    To prove that the diagram is Cartesian, it suffices to prove that a $T$-point of $T^*\mathcal{Y}$ factors through $df$ if and only if its image under $H_\mathcal{Y}$ factors through $Tdf$. The commutativity of the diagram implies that if a $T$-point factors through $df$ then its image factors through $Tdf$. Conversely, let $(y,v)$ be a $T$-point of $T^*\mathcal{Y}$, with $y:T\to \mathcal{Y}$ and $v\in H^0(T,y^*\Omega^1_\mathcal{Y})$, and suppose that the image under the Hamiltonian embedding factors through $Tdf$. Hence the $T[\varepsilon]$-point $(y_\varepsilon,\varepsilon v)$ of $T^*\mathcal{Y}$ factors through $df$. Let $x$ denote the image of $y$ under $f$. Then there is an element $w\in H^0(T[\varepsilon],x_\varepsilon^* \Omega_\mathcal{X}^1)$ which maps to $\varepsilon v$. But then $w$ must be of the from $w=\varepsilon w'$ for $w'\in H^0(T,x^*\Omega_\mathcal{X}^1)$ and $w'$ maps to $v$. Hence $(y,v)$ is in the image of $df$, being the image of $(y,w')$. 
\end{proof}

For $\mathcal{X}$ a stack, let $D_\mathcal{X}$ denote the moduli problem sending a test object $T$ to the space of pairs $(f,x)$ where $f$ is a function $f:T\to \mathbb{A}^1$ and $x$ is a map $T_f:=\underline{\mathrm{Spec}} \mathcal{O}_T[x]/x(x-f)\to \mathcal{
X}$. Note that $D_\mathcal{X}$ admits a map $F:D_\mathcal{X}\to \mathbb{A}^1$ given by sending $(f,x)$ to $f$. The fiber over $0\in \mathbb{A}^1$ is equivalent to the tangent stack $T\mathcal{X}$. Composing with the Hamiltonian embedding gives a map $H'_X:T^*X\to D_{T^*X}$. Let $O_\mathcal{X}\hookrightarrow D_\mathcal{X}$ denote the open substack given by the preimage of $\mathbb{A}^1\setminus \{0\}$. When $\mathcal{X}$ is a smooth variety, $D_\mathcal{X}$ identifies with the deformation to normal cone of the diagonal $\mathcal{X}\to \mathcal{X}\times \mathcal{X}$.

Given a test scheme $T$ and $f:T\to \mathbb{A}^1$, let $T_{f,+}$ denote the closed subscheme of $T_f$ given by $V(x)$ and similarly let $T_{f,-}$ denote the closed subscheme given by $V(x-f)$. As schemes these are both isomorphic to $T$. Thus we obtain a map $P_\mathcal{X}:D_\mathcal{X}\to \mathcal{X}\times \mathcal{X}$ given by restricting $x$ to $T_{f,+}$ and $T_{f,-}$. From the moduli description we see that $P_\mathcal{X}$ and $F:D_\mathcal{X}\to \mathbb{A}^1$ induce an equivalence from $O_\mathcal{X}$ to $\mathcal{X}\times \mathcal{X}\times \mathbb{A}\setminus\{0\}$.

\begin{mdef}
    Let $X$ be a smooth variety and $\mathcal{N},\mathcal{N}'\subseteq T^*X$ be closed conical subsets. The \textit{completed sum} $\mathcal{N}\widehat{+}\mathcal{N}'$ is the closed conical subset of $T^*X$ given by $H'^{-1}_X(\overline{O_{T^*X}\cap P_{T^*X}^{-1}(\mathcal{N}\times \mathcal{N}')})$.
\end{mdef}

We will now extend this definition to smooth Artin stacks. To specify a closed subset of $T^*\mathcal{X}$ is equivalent to specifying a compatible system of closed subsets of $T^*\mathcal{X}\times_{\mathcal{X}} Y$ for all smooth morphisms $f:Y\to \mathcal{X}$ with $Y$ a scheme. The following lemma gives this compatibility for defining $\widehat{+}$.

\begin{mlem}
    Let $f:Y\to X$ be a smooth morphism of smooth varieties and $\mathcal{N},\mathcal{N}'\subseteq T^*X$ be closed conical subsets, there is an equality of closed subsets of $T^*Y$
    \[df(f_t^{-1}(\mathcal{N}))\widehat{+} df(f_t^{-1}(\mathcal{N}'))=df(f_t^{-1}(\mathcal{N}\widehat{+}\mathcal{N}')).\]
\end{mlem}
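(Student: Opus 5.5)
We work with the moduli description of the completed sum: $\mathcal{N}\widehat{+}\mathcal{N}'$ is the preimage under $H'_X$ of the closure of $O_{T^*X}\cap P^{-1}_{T^*X}(\mathcal{N}\times\mathcal{N}')$ inside $D_{T^*X}$. The plan is to compare the deformation spaces $D_{T^*Y}$ and $D_{T^*X}$ through the correspondence $T^*Y\hookleftarrow T^*X\times_X Y\to T^*X$. Put $Z:=T^*X\times_X Y$. Functoriality of the construction $\mathcal{X}\mapsto D_\mathcal{X}$ gives maps
\[
D_{T^*Y}\overset{D(df)}{\hookleftarrow} D_{Z}\overset{D(f_t)}{\longrightarrow} D_{T^*X},
\]
compatible with $F$ and $P$ and with the Hamiltonian maps $H'$. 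The compatibility with $H'$ comes from the previous lemma, since $T\mathcal{X}$ is the fiber of $D_\mathcal{X}$ over $0$.

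\textbf{Step 1: cut down from $T^*Y$ to $Z$.} Since $df$ is a closed immersion, $D(df)$ is a closed immersion: a map $T_f\to T^*Y$ factors through $Z$ if and only if the covector lies in the subbundle $f^*\Omega_X$, which is a closed condition. The set $O_{T^*Y}\cap P^{-1}(df(f_t^{-1}\mathcal{N})\times df(f_t^{-1}\mathcal{N}'))$ lies in $D_Z$. So its closure in $D_{T^*Y}$ is the image of its closure in $D_Z$.

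By the Cartesian square of the previous lemma, pulled back along $D(df)$ and restricted to the zero fiber, a point of $T^*Y$ lands in this closure exactly when it lies in $df(Z)$ and its image in $D_Z$ lands in the closure. This reduces the left-hand side to a statement about $Z$ and $D_Z$.

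\textbf{Step 2: descend along $f_t$.} We have $D_Z\simeq D_{T^*X}\times_{D_X}D_Y$. The map $D_Y\to D_X$ is smooth, because it is the deformation to the normal cone of the diagonal under the smooth map $f$; in local étale coordinates $Y\simeq X\times\mathbb{A}^r$, it is a product with $D_{\mathbb{A}^r}$. Hence $D(f_t)$ is smooth, in particular open and flat.

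For an open map, the preimage of a closure is the closure of the preimage. Moreover
\[
D(f_t)^{-1}\bigl(O\cap P^{-1}(\mathcal{N}\times\mathcal{N}')\bigr)=O_Z\cap P^{-1}(f_t^{-1}\mathcal{N}\times f_t^{-1}\mathcal{N}').
\]
Pulling back along $H'$ and using the compatibility $H'_X\circ f_t=D(f_t)\circ H'_Z$ then gives the equality of the two sides.

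\textbf{Main obstacle.} The delicate point is verifying that $D(f_t)$ is smooth, or at least open, and that the closure commutes with the restriction to the zero fiber composed with $H'$. Everything else is formal.

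If smoothness of $D_Y\to D_X$ turns out to be awkward, we would argue étale locally. Choose coordinates in which $f$ is the projection $X\times\mathbb{A}^r\to X$. Then $D_Y=D_X\times D_{\mathbb{A}^r}$, and $D_{\mathbb{A}^r}\simeq\mathbb{A}^r\times\mathbb{A}^r\times\mathbb{A}^1$ is a trivial deformation. With this, the closure computation reduces to the fact that taking the closure of $A\times B$ and of $A$ times the ambient space commutes with products by smooth factors.
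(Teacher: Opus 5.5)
Your proposal is correct and follows essentially the same route as the paper. You transport $O\cap P^{-1}(\mathcal{N}\times\mathcal{N}')$ along the correspondence $D_{T^*Y}\hookleftarrow D_Z\to D_{T^*X}$, use that the closed immersion $D(df)$ and the smooth (hence open) map $D(f_t)$ commute with taking closures, and pass through the Hamiltonian embedding via the Cartesian square of Lemma 3.4. Your justification of the smoothness of $D(f_t)$, as a base change of $D_Y\to D_X$ using $D_Z\simeq D_{T^*X}\times_{D_X}D_Y$, is slightly more detailed than the paper, which simply asserts it.
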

\begin{proof}
    Consider the following commutative diagram
    \[
\begin{tikzcd}
T^*Y \arrow[d, "H_Y", hook] & T^*X\times_{X}Y \arrow[d, "H_X\times_{0_X} 0_Y", hook] \arrow[r,"f_t"] \arrow[l, "df"', hook'] & T^*X \arrow[d, "H_X", hook] \\
TT^*Y \arrow[d, hook]       & T(T^*X\times_{X}Y) \arrow[r] \arrow[l, hook'] \arrow[d, hook]                            & TT^*X \arrow[d, hook]       \\
D_{T^*Y}                    & D_{T^*X\times_{X}Y} \arrow[l, hook'] \arrow[r]                                           & D_{T^*X}                   
\end{tikzcd}\]
Since $O_{T^*X}$ maps isomorphically to $T^*X\times T^*X\times \mathbb{A}^1\setminus \{0\}$, transporting $O_{T^*X}\cap P_{T^*X}^{-1}(\mathcal{N}\times \mathcal{N}')$ along the bottom correspondance gives exactly $O_{T^*Y}\cap P_{T^*Y}^{-1}(df(f_t^{-1}(\mathcal{N}))\times df(f_t^{-1}(\mathcal{N}')))$. Since the map $T^*X\times_X Y\to T^* X$ is smooth so is the map $D_{T^*X\times_X Y}\to D_{T^*X}$, in particular the latter map is open. Thus transporting along the bottom correspondence commutes with taking closures. The result now follows since Lemma 3.4 implies that transporting and pulling back along the Hamiltonian embedding is the same as pulling back along the Hamiltonian embedding and then transporting.
\end{proof}

For any smooth map $f:U\to \mathcal{X}$ with $U$ a scheme, it follows from Lemma 3.4 that the closed set $df(f_t^{-1}(\mathcal{N}))\widehat{+}df(f_t^{-1}(\mathcal{N}'))\subset T^*U$ is a closed subset of  $T^*\mathcal{X}\times_\mathcal{X} U$. Thus we can make the following definition.

\begin{mdef}
    Let $\mathcal{X}$ be a smooth Artin stack and $\mathcal{N},\mathcal{N'}\subseteq T^*\mathcal{X}$ be closed conical subsets. The \textit{completed sum} $\mathcal{N}\widehat{+}\mathcal{N}'\subseteq T^*\mathcal{X}$ is the closed conical subset which, for each smooth map $f:U\to \mathcal{X}$ with $U$ a scheme, is given by
    \[(\mathcal{N}\widehat{+}\mathcal{N}')|_U:=df(f_t^{-1}(\mathcal{N}))\widehat{+}df(f_t^{-1}(\mathcal{N}'))\subseteq T^*\mathcal{X}\times_\mathcal{X} U.\]
\end{mdef}

Suppose we are given a smooth map $g:U'\to U$, to see that the closed subsets in Definition 3.7 form a compatible system consider the following diagram

\[
\begin{tikzcd}
T^*U'                               &                                                                                               &                \\
T^*U\times_U U' \arrow[u, hook'] \arrow[d] & T^*\mathcal{X}\times_{\mathcal{X}} U' \arrow[rd] \arrow[lu, hook'] \arrow[l, hook'] \arrow[d] &                \\
T^*U                                & T^*\mathcal{X}\times_{\mathcal{X}} U \arrow[r] \arrow[l, hook']                               & T^*\mathcal{X}
\end{tikzcd}\]
By applying Lemma 3.6 to the map $g$ and using that the square in the above diagram is Cartesian, it follows that the preimage of $(\mathcal{N}\widehat{+}\mathcal{N}')|_U\subset T^*\mathcal{X}\times_\mathcal{X} U$ under $T^*\mathcal{X}\times_\mathcal{X} U'\to T^*\mathcal{X}\times_\mathcal{X} U$ is $(\mathcal{N}\widehat{+}\mathcal{N}')|_{U'}$.

\section{Collisions of Nilpotent Higgs Fields}

In this section we will reinterpret the non-characteristicness of  $\mathrm{Nilp}_u\widehat{+}\mathrm{Nilp}_u\subseteq T^* \mathrm{Bun}_G$ over $\mathcal{M}_g$ in terms of \textit{collisions of nilpotent absolute Higgs fields}.

\subsection{The setup} Throughout we will be working with several different geometric objects living over $T=\mathrm{Spec } R$ and $T_f:=\mathrm{Spec } R[x]/x(x-f)$ for $R$ a discrete valuation ring and $f\in \mathfrak{m}\setminus\{0\}$, we therefore introduce the following general notation.
\begin{mnot}
    Let $R$ be a discrete valuation ring with maximal ideal $\mathfrak{m}$ and $f\in \mathfrak{m}\setminus\{0\}$. For any geometric object $S$ over $T_f:=\operatorname{Spec} R[x]/x(x-f)$, let $S_+$ denote the pullback of $S$ to the closed subscheme $V(x)\subset T_f$. Similarly let $S_-$ denote the pullback to $V(x-f)\subset T_f$. Let $\eta$ and $s$ denote the generic and the special fiber of $T=\operatorname{Spec} R$ respectively. For any geometric object $S$ over $T$ let $S_s$ and $S_\eta$ denote the pullback of $S$ to the special and the generic fiber of $T$ respectively. 
\end{mnot}

Since $\mathrm{Bun}_G$ is a smooth Artin stack of locally finite type, the non-characteristicness of $\mathrm{Nilp}_u \widehat{+}\mathrm{Nilp}_u\subseteq T^* \mathrm{Bun}_G$ relative to $\mathcal{M}_g$ is equivalent to the non-characteristicness of the closed conical subsets $df(f_t^{-1}(\mathrm{Nilp}_u))\widehat{+}df(f_t^{-1}(\mathrm{Nilp}_u))\subseteq T^*U$ relative to $\mathcal{M}_g$, for all smooth maps $f:U\to \mathrm{Bun}_G$ with $U$ a scheme of finite type over $\mathrm{Spec }k$. Given $f:U\to \mathrm{Bun}_G$ consider the following diagram
\[
\begin{tikzcd}
T^*U \arrow[d, "H'_U"] & T^*\mathrm{Bun}_G\times_{\mathrm{Bun}_G} U \arrow[r] \arrow[d, "H'_{\mathrm{Bun}_G}\times_{0_{\mathrm{Bun}_G}} 0_U"] \arrow[l, hook'] & T^*\mathrm{Bun}_G \arrow[d, "H'_{\mathrm{Bun}_G}"] \\
D_{T^*U}               & D_{T^*\mathrm{Bun}_G\times_{\mathrm{Bun}_G} U} \arrow[r] \arrow[l, hook']                                                             & D_{T^*\mathrm{Bun}_G}                             
\end{tikzcd}\]
As in the previous section 
\begin{align*}
    O_{T^*U}\cap P_{T^*U}^{-1}(df(f_t^{-1}(\mathrm{Nilp}_u))\times df(f_t^{-1}(\mathrm{Nilp}_u)))&=\\
    O_{T^*\mathrm{Bun_G}\times_{\mathrm{Bun_G}} U}\cap P_{T^*\mathrm{Bun_G}\times_{\mathrm{Bun_G}}U}^{-1}&(f_t^{-1}(\mathrm{Nilp}_u)\times f_t^{-1}(\mathrm{Nilp}_u))=: O_{{\mathrm{Nilp}_u},U}.
\end{align*}
Since $D_{T^*\mathrm{Bun_G}\times_{\mathrm{Bun_G}}U}$ is a representable by a finite type scheme over $\operatorname{Spec}k$, it follows that geometric points in the closure of a locally closed set $Z$ can be detected by maps from strictly Henselian discrete valuation rings. More precisely, given a geometric point $x\in \overline{Z}$ there exist a strictly Henselian discrete valuation ring $R$ and a map from $\operatorname{Spec } R$ such that the generic point factors through $Z$ and the closed point is $x$.

\subsection{Collisions} Let $R$ be a strictly Henselian discrete valuation ring with separably closed residue field $k$ and maximal ideal $\mathfrak{m}$. Let $T:=\operatorname{Spec} R\to D_{T^*U}$ be a map detecting a $k$-point in the closure of $O_{{\mathrm{Nilp}_u},U}$ with the $k$-point in the closed substack $TT^*U$. Since $O_{\mathrm{Nilp}_u,U}$ is a subset of $D_{T^*\mathrm{Bun_G}\times_{\mathrm{Bun_G}}U}$, it follows that the map $T\to D_{T^*U}$ factors through $D_{T^*\mathrm{Bun_G}\times_{\mathrm{Bun_G}}U}$ and the underlying $k$-point is in the closed substack $T(T^*\mathrm{Bun_G}\times_{\mathrm{Bun_G}}U)$. Consider the resulting map $T\to D_{T^*\mathrm{Bun}_G}$, unwinding the definitions such a map gives rise to the following data.

\begin{mdef}
    A \textit{collision of nilpotent absolute Higgs fields over} $T$ is a quadruple $(f,X,P,v)$, where $f\in \mathfrak{m}\setminus \{0\}$, $X$ is a smooth proper curve over $T_f:=\mathrm{Spec} R[x]/x(x-f)$, $P_G$ is a $G$-bundle on $X$, $v\in H^0(X,\mathrm{At}(P_G)^\vee\otimes \Omega^1_{X/T_f})$ such that $v_{+,\eta}$, respectively $v_{-,\eta}$, is a nilpotent absolute Higgs field on $P_{+,\eta}$, respectively $P_{-,\eta}$.
\end{mdef}

An absolute Higgs field $v$ on a $G$-bundle $P$ over a curve $X/T$ defines a cotangent vector pulled back from $\mathcal{M}_g$ exactly if $v$ is in the image of the map
 \[H^0(X,\Omega_{X/T}^{\otimes 2})\hookrightarrow H^0(X,\mathrm{At}(P)^\vee\otimes \Omega_{X/T}).\]
Thus the non-characteristicness of  $\mathrm{Nilp}_u\widehat{+}\mathrm{Nilp}_u\subseteq T^* \mathrm{Bun}_G$ over $\mathcal{M}_g$ is a consequence of the following statement.

\begin{mprop}
    Suppose $G$ has semi-simple rank $1$ and let $(f,X,P,v)$ be a collision of nilpotent absolute Higgs fields. If $v_s$ is in the image of the map
    \[H^0(X_s,\Omega_{X_s/(T_f)_s}^{\otimes 2} )\hookrightarrow H^0(X_s,\mathrm{At}(P_s)^\vee \otimes \Omega_{X_s/(T_f)_s}),\]
    then $v_s=0$.
\end{mprop}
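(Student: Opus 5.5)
Following the outline in the introduction, I reduce to a formal computation near a well-chosen section and then use the rank-one structure there.

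\textbf{Step 1: rational Borel reductions and the quotient $D_\mu$.} Since $v_{+,\eta}$ and $v_{-,\eta}$ are nilpotent, the description of field-valued points of $\mathrm{Nilp}_u$ gives $B$-reductions $P_{B,+}$ of $P_{+,\eta}$ and $P_{B,-}$ of $P_{-,\eta}$ killing $v_{\pm,\eta}$. These extend to rational $B$-reductions over $X_\pm$, defined on a fiberwise dense open. On that locus $v_\pm$ is annihilated by the quotient $\mathrm{At}(P_\pm)^\vee\otimes\Omega\to \mathrm{At}(P_{B,\pm})^\vee\otimes\Omega$. The subbundle $\Omega^{\otimes 2}$ maps isomorphically into each such quotient.

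Since $R$ is strictly Henselian, I choose a section $\mu$ of $X/T$ with three properties: it extends to $T_f$, it avoids the degeneracy loci of both reductions on the special fiber, and near it both reductions are regular. By the Proposition identifying $i_\mu^*\mathrm{At}(P_G)^\vee$ with $\mathrm{At}(i_\mu^*P_G)^{\vee,c}$, I pass to $X\widehat{_\mu}\simeq \operatorname{Spec} R[x]/(x(x-f))[[z]]$. There I trivialize $P$ and write the two reductions as $g_\pm\in G(R[[z]])$, possibly after inverting a function, relative to a fixed Borel.

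\textbf{Step 2: the formal rank-one computation.} For $G$ of semi-simple rank $1$, I reduce modulo the center, which contributes only $\mathrm{dlog}$ terms controlled by the $\mathbb{G}_m$ Proposition. This reduces to $\mathrm{PGL}_2$ or $\mathrm{SL}_2$, where $G/B=\mathbb{P}^1$ and a $B$-reduction is a line subbundle $L_\pm$.

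On the open where $L_+\ne L_-$, the two lines split the rank-$2$ bundle, so they determine a canonical torus. This is the ambiguity-free case: in rank one the torus in $\mathfrak{b}_+\cap\mathfrak{b}_-$ is unique when the Borels are distinct. It yields a quotient $D_\mu=\mathrm{At}(P_T)^{\vee,c}\otimes\Omega^c$ through which both Borel quotients factor.

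For $D_\mu$ to be usable I need $v$ to map to zero in it. Since $v_\pm$ dies in the $\pm$ Borel quotient, its image in $D_\mu$ vanishes on $X_+$ and on $X_-$ separately. I then use that $D_\mu$ is a vector bundle over the reduced ring $R[x]/x(x-f)$ defined over all of $T_f$. The rational difference of the two trivializations involves a function whose $\mathrm{dlog}$ appears via the $\mathbb{G}_m$ Proposition, and one must check it stays integral after dividing by $f$.

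\textbf{Step 3: conclusion.} Suppose $v_s=q\in H^0(\Omega^{\otimes 2})$. The map $\Omega^{\otimes 2}\to D_\mu$ remains injective, in fact a subbundle, on the special fiber. So $q$ restricted to a fiberwise dense open of the formal neighbourhood must vanish. Hence $q$ vanishes on an open subset of $X_s$, and $v_s=0$.

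\textbf{Main obstacle.} I expect the main difficulty to be the degenerate locus where $L_+=L_-$ specializes, or where the two reductions collide on the special fiber. There the canonical torus degenerates, and one must show $D_\mu$ still extends with $\Omega^{\otimes2}$ as a subbundle after completing. My first attempt would be a direct $\mathrm{SL}_2$ matrix computation: put $g_+^{-1}g_-$ in the form $\begin{pmatrix}1&0\\ u&1\end{pmatrix}$ with $u\in R[x]/(x(x-f))((z))$, and track the $\mathrm{dlog}$ correction to the $\Omega$-component explicitly, using the freedom in choosing $\mu$ so that $u$ is a unit.
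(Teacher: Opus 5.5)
There is a genuine gap, and it sits exactly at what you call the main obstacle; the fix you suggest there cannot work. Your quotient $D_\mu=\mathrm{At}(P_T)^{\vee,c}$ exists only on the locus where the two Borel reductions are opposed. Step 3 needs that locus to be fiberwise dense over $T_f$, in particular to meet the special fiber. Nothing prevents the two rational reductions from coinciding along all of $X_0$, and this is the typical shape of a collision: compare the $\mathrm{GL}_3$ example in Section 6, where the two flags differ by $A\equiv 1 \bmod t$. In that case your $u$ vanishes identically on the special fiber. So no choice of $\mu$ makes it a unit, the opposed locus lies in the generic fiber, and $D_\mu$ says nothing about $v_s$. (If the reductions are opposed at some point of $X_0$, choosing $\mu$ through that point does make your argument work, with $U$ the whole formal neighbourhood. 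That is only the easy case.) Your worry about integrality ``after dividing by $f$'' also looks at the wrong denominators. The relevant ones are powers of the uniformizer $t$ of $R$. Compatibility between the two branches is automatic from reducedness of $T_f[[z]]$, once both reductions are pulled back along $q$ from a single bundle on $T[[z]]$.

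What is missing is a quotient that does not go all the way down to the torus. The paper takes the pushout $I$ of $\mathrm{At}(P_B^+)^{\vee,c}\leftarrow\mathrm{At}(P)^{\vee,c}\to\mathrm{At}(P_B^-)^{\vee,c}$. It then coequalizes only the images of $\mathrm{At}(\alpha(P_B^+))^{\vee,c}$ and $\mathrm{At}((w\cdot\alpha)(P_B^-))^{\vee,c}$, via an isomorphism $c_\alpha$ between them. The line bundles $\alpha(\overline{P}_B^+)$ and $(w\cdot\alpha)(\overline{P}_B^-)$ are identified only on the opposed locus, by a unit $a/g^n$, and this identification does not extend. However, its effect on dual Atiyah bundles is $\begin{pmatrix}1&\mathrm{dlog}(a/g^n)\\0&1\end{pmatrix}$. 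Writing $a=t^la'$ and $g^n=t^kg'$ with $a',g'\not\equiv 0\bmod t$ gives $\mathrm{dlog}(a/g^n)=\mathrm{dlog}(a'/g')$, which is defined on the fiberwise dense open $D(a'g')$ (Lemma 5.5). A fiber-dimension count then shows that the resulting $D$ is a vector bundle containing $\Omega^c$ as a subbundle, including at points where the Borels coincide. At such points the relative part coequalizes $\alpha$ and $w\cdot\alpha=-\alpha$ on $\mathfrak{b}^\vee$, killing the line spanned by $2\alpha\neq 0$. This leaves rank $\dim\mathfrak{t}$, the same as on the opposed locus, and this is where semi-simple rank $1$ enters. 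Finally, your reduction modulo the center to $\mathrm{SL}_2$ or $\mathrm{PGL}_2$ is unjustified as stated and unnecessary; working directly with the characters $\alpha$ and $w\cdot\alpha$ of $B$ avoids it.
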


To see that Proposition 4.2 implies Proposition 1.1, note that if $(X_0,P_0,v_0)$ is a $k$-point of $\mathrm{Nilp}_u\widehat{+}\mathrm{Nilp}_u$, then the image under the Hamiltonian embedding $(X_0[\varepsilon],P_0[\varepsilon],\varepsilon v_0)$ extends to a collision of nilpotent Higgs fields over the spectrum of a strictly Henselian discrete valuation ring $T$, with an isomorphism of $(X_0[\varepsilon],P_0[\varepsilon],\varepsilon v_0)$ with $(X_s,P_s,v_s)$. In particular if $v_0$ is pulledback from $\mathcal{M}_g$, then $v_s=\varepsilon v_0$ satisfies the assumption of Proposition 4.2 and hence $\varepsilon v_0=v_s=0$. Recall that
\[ H^0(X[\varepsilon],\Omega_{X[\varepsilon]/k[\varepsilon]}^1)\simeq H^0(X,\Omega_X^1)\oplus \varepsilon H^0(X,\Omega_X^1)\]
thus $v_0=0$. From now on we assume that $G$ has semi-simple rank $1$. We will prove Proposition 4.2 by proving the following proposition.

\begin{mprop}
    Let $(f,X,P,v)$ be a collision of nilpotent absolute Higgs fields over $T$. There exist a section $\mu: T_f\to X$, an open subscheme $U\subseteq X\widehat{_\mu}$, which is fiberwise dense over $T_f$, and a quotient vector bundle $D_\mu$ of $\mathrm{At}(P|_U)^{\vee,c}$ over $U$, such that the composition $\Omega_{U/T_f}^c\to \mathrm{At}(P|_U)^{\vee,c}\to D_\mu$ defines a subvector bundle of $D_\mu$ and $v$ is in the kernel of the map
    \[H^0(X,\mathrm{At}(P)^\vee\otimes \Omega_{X/T_f})\to H^0(U,D_\mu \otimes \Omega_{U/T_f}^c).\]
\end{mprop}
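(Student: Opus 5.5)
We would follow the outline in the introduction. The first step is to produce rational $B$-reductions. Since $v_{+,\eta}$ and $v_{-,\eta}$ are nilpotent, there are $B$-reductions of $P_{+,\eta}$ and $P_{-,\eta}$ killing them. By properness of $G/B$ these extend over the generic point of the special fiber (the DVR has dimension one and $X_\pm$ is a regular surface). So they extend to rational $B$-reductions $P_{B,+}$, $P_{B,-}$ of $P_\pm$, defined away from a closed subset finite over $T$.

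Next we choose the section $\mu$. Using that $R$ is strictly Henselian, we pick $\mu:T_f\to X$ whose two branches avoid the bad loci of both reductions on both fibers. Restricting to $X\widehat{_\mu}\simeq \mathrm{Spec}\, R[x]/x(x-f)[[z]]$, via an étale coordinate as in Section 2, the two reductions become $B$-reductions on an open $U$. We take $U$ to be the complement of the locus where the two Borels fail to be in generic relative position, or where the rational reductions are undefined. This $U$ is fiberwise dense, since that bad locus is cut out by a function nonvanishing on each fiber of each branch after a suitable choice of $\mu$.

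On $U$ we trivialize $P$, possible over $R[[z]]$-type rings after inverting the bad function. In semi-simple rank $1$ the two Borels $B_+,B_-$ are either equal or opposite generically. The key point is that in rank $1$ their intersection $B_+\cap B_-$ is, in the opposite case, exactly a maximal torus $T_G$. There is no ambiguity of choice of torus, so a quotient $D_\mu:=\mathrm{At}(P_{T_G})^{\vee,c}$ is canonically defined. In the equal case we take $D_\mu$ to be the $B$-quotient.

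We must then check two things. First, $v$ maps to zero: on the $+$ branch $v$ dies in $\mathrm{At}(P_{B,+})^\vee$, which surjects onto $D_\mu$, and similarly on the $-$ branch. Since $R[x]/x(x-f)$ embeds into the product of the two branches, $v$ vanishes in $D_\mu\otimes\Omega^c$. Second, $\Omega^c_{U/T_f}\to D_\mu$ is a subbundle, which is immediate from the Atiyah sequence for $P_{T_G}$.

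The main obstacle is that $P_{B,\pm}$ live on different branches. The torus reduction must be a single reduction of $P|_U$ over $T_f$ that restricts correctly to both branches, i.e.\ the gluing over $x=0=x-f$, which over $R/f$ is a nontrivial condition. To handle this we would use the $\mathbb{G}_m$ automorphism computation, Proposition 2.5: changing the trivialization by $g$ shifts the Atiyah quotient by $\mathrm{dlog}\, g$. The plan is to lift the pair of branchwise reductions to $T_f$ by solving for a unipotent gauge over $R[x]/x(x-f)[[z]][1/h]$. This uses that in rank $1$ the unipotent radical is one-dimensional, so the compatibility reduces to an affine-linear equation in a single function. That equation is solvable after possibly shrinking $U$, whereas for higher rank the non-commutativity of the unipotent radical obstructs it.
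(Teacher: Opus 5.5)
The gap is in the construction of $D_\mu$ on a fiberwise dense open. Your quotient $\mathrm{At}(P_{T_G})^{\vee,c}$ exists only where the two Borel reductions are opposite. That locus is dense but in general \emph{not} fiberwise dense. Your claim that a suitable $\mu$ makes the bad locus miss the special fiber is false, because the two reductions can agree along the entire special fiber.

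Here is an example. Take $G=\mathrm{GL}_2$, $f=t^2$, $X=X_0\times T_f$ a constant family, $P$ trivial, $0\neq\omega_0\in H^0(X_0,\Omega_{X_0})$, $n$ spanning $\mathfrak{b}^\perp\subset\mathfrak{g}^\vee$, and $h=\bigl(\begin{smallmatrix}1&0\\ t&1\end{smallmatrix}\bigr)$. Let $n'$ be the coadjoint translate of $n$ that is killed by $hBh^{-1}$, so $n'\equiv n \bmod t$.
\begin{itemize}
\item The fields $t\,n\otimes\omega_0$ on $X_+$ and $t\,n'\otimes\omega_0$ on $X_-$ agree modulo $f$, so they glue to a collision $v$. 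Since $h$ is constant, $h^{-1}dh=0$ and no $\Omega$-correction appears.
\item The reductions $B$ and $hBh^{-1}$ are forced, because $v_{\pm,\eta}$ are nonzero nilpotents.
\item These reductions are opposite exactly where $t$ is invertible, for every choice of $\mu$.
\end{itemize}
Your fallback, a single torus reduction over $T_f$ compatible with both branches obtained from a unipotent gauge, also fails here. Restrict it to $V(x)\cap V(x-f)$ and localize at the generic point of the special fiber, which any fiberwise dense $U$ contains. This gives, over $A=R[[z]]_{(t)}/(t^2)$, a maximal torus stabilizing both lines $Ae_1$ and $A(e_1+te_2)$. Those lines would then have to be equal or complementary, so $t$ would be zero or a unit in $A$, and it is neither. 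Hence no ``affine-linear equation'' is solvable on a fiberwise dense $U$, and semi-simple rank $1$ does not help in the way you suggest.

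What is missing is the paper's construction in Section 5, which avoids tori altogether.
\begin{itemize}
\item \textbf{The branch problem needs no equation.} Fix one uniformization $X\widehat{_\mu}\simeq T_f[[z]]$ and the projection $q:T_f[[z]]\to T[[z]]$, which identifies both branches with $T[[z]]$. Both reductions then become $B$-structures $\overline{P}_B^\pm$ on one bundle $\overline{P}$ over $T[[z]]$, and their pullbacks $P_B^\pm$ are defined on all of $T_f[[z]]$ (Construction 4.5).
\item \textbf{The quotient $D$.} It is the quotient of the pushout $I$ of $\mathrm{At}(P_B^+)^{\vee,c}$ and $\mathrm{At}(P_B^-)^{\vee,c}$ that coequalizes $\rho^+_\alpha$ and $\rho^-_{w\cdot\alpha}\circ c_\alpha$. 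Here $c_\alpha:\mathrm{At}(\alpha P_B^+)^{\vee,c}\simeq\mathrm{At}((w\cdot\alpha)P_B^-)^{\vee,c}$ is induced on the opposite locus $U'$ by an isomorphism of $\mathbb{G}_m$-bundles.
\item \textbf{The key point (Lemma 5.5).} Unlike the torus, $c_\alpha$ extends to a fiberwise dense open. It is determined by $\mathrm{dlog}$ of a unit on $U'$. Writing that unit as $t^m a'/g'$ with $a',g'$ not divisible by $t$, one has $\mathrm{dlog}(t^m a'/g')=\mathrm{dlog}(a'/g')$, which is defined wherever $a'g'$ is invertible.
\item \textbf{Vector bundle property.} That $D$ is a vector bundle containing $\Omega^c$ as a subbundle follows from a fiber-dimension count. $D_{\mathrm{rel}}$ has rank $\dim\mathfrak{t}$ both where the reductions coincide and where they are opposite, $D$ has generic rank $\dim\mathfrak{t}+1$, and fiber dimension is upper semicontinuous.
\end{itemize}
Once such a $D$ exists, your final argument that $v$ vanishes, via the two branches and reducedness, matches Proposition 5.8. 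Extending the reductions via properness of $G/B$ at codimension-one points is a fine substitute for the Drinfeld compactification argument.
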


We will now see how Proposition 4.3 implies Proposition 4.2.

\begin{proof}[Proof of Proposition 4.2 assuming Proposition 4.3]
    Suppose we are in the setting of Proposition 4.2 and have chosen the data that exist by Proposition 4.3.
    Since $v$ is in the kernel of the map $H^0(X,\mathrm{At}(P)^\vee\otimes \Omega_{X/T_f})\to H^0(U,D_\mu \otimes \Omega_{U/T_f}^c)$, it follows that $v_s$ is in the kernel of
    \[H^0(X_s,\mathrm{At}(P_s)^\vee\otimes \Omega_{X_s/(T_f)_s})\to H^0(U_s,D_\mu|_{U_s} \otimes \Omega_{U_s/(T_f)_s}^c).\]
    By assumption $v_s$ is in the image of the map
    \[H^0(X_s,\Omega_{X_s/(T_f)_s}^{\otimes 2} )\hookrightarrow H^0(X_s,\mathrm{At}(P_s)^\vee \otimes \Omega_{X_s/(T_f)_s}).\]
    It follows that $v_s|_{U_s}$ is an element of $H^0(U_s,\Omega_{U_s/(T_f)_s}^{c,\otimes 2} )$ in the kernel of the map
    \[H^0(U_s,\Omega_{U_s/(T_f)_s}^{c,\otimes 2})\to H^0(U_s,D_\mu|_{U_s} \otimes \Omega_{U_s/(T_f)_s}^c).\]
    Since the map $\Omega_{U/T_f}^c\to D_\mu$ is a subvector bundle, so is its restriction to the fiber over $s$. In particular the above map is injective, hence $v_s|_{U_s}=0$.
    
    Since $U\subseteq X\widehat{_\mu}$ is fiberwise dense over $T_f$ and $s$ is a closed point of $T$, it follows that $U_s\subseteq  (X\widehat{_\mu})_s\simeq (X_s)\widehat{_{\mu_s}}$ is a dense open subset. Since $X_s$ is a smooth curve  over $\operatorname{Spec}k[\varepsilon]$, it follows that $(X_s)\widehat{_{\mu_s}}$ is isomorphic to $\operatorname{Spec } k[\varepsilon][[z]]$. Every zero-divisor of $k[\varepsilon][[z]]$ is nilpotent, hence, since $U_s$ is non-empty, the restriction map
    \[H^0((X_s)\widehat{_{\mu_s}},\Omega_{X_s/(T_f)_s}^{\otimes 2}|_{(X_s)\widehat{_{\mu_s}}})\to H^0(U_s,\Omega_{U_s/(T_f)_s}^{c,\otimes 2})\]
    is injective, so $v_s|_{(X_s)\widehat{_{\mu_s}}}=0$. Since $X_s$ is smooth over $\operatorname{Spec}k[\varepsilon]$, it follows that for any connected affine open subscheme $\operatorname{Spec} R'\subseteq X_s$ every zero divisor of $R'$ is nilpotent. In particular the restriction of differential forms to a connected affine open neighborhood $\operatorname{Spec } R'$ of $\mu_s$ is injective. Let $I'\subset R'$ be the ideal defining $\mu_s$ and $\mathfrak{m}'$ be a maximal ideal containing $I'$. Since every zero divisor of $R'$ is nilpotent, the map to the localization at $\mathfrak{m}'$ is injective. Hence it follows by applying Krull's intersection theorem that the map
    \[H^0(X_s,\Omega_{X_s/(T_f)_s}^{\otimes 2})\to H^0((X_s)\widehat{_{\mu_s}},\Omega_{X_s/(T_f)_s}^{\otimes 2}|_{(X_s)\widehat{_{\mu_s}}})\]
    is also injective, so $v_s=0$.
\end{proof}

\subsection{Choosing $\mu$} As the first step in proving Proposition 4.3 we construct the section $\mu$.  Since $v_{+,\eta}$ is a nilpotent absolute Higgs field on $P_{+,\eta}$ over $X_{+,\eta}$, which is a smooth curve over a field, there exist a $B$-structure $P_{B,+,\eta}$ on $P_{+,\eta}$ such that $v_{+,\eta}$ is in the kernel of the map
\[H^0(X_{+,\eta},\mathrm{At}(P_{+,\eta})^\vee \otimes \Omega_{X_{+,\eta}})\to H^0(X_{+,\eta},\mathrm{At}(P_{B,+,\eta})^\vee \otimes \Omega_{X_{+,\eta}}).\]
Similarly let $P_{B,-,\eta}$ be a $B$-structure on $P_{-,\eta}$ killing $v_{-,\eta}$. We now use the fact that $T$ is the spectrum of a discrete valuation ring to extend these $B$-structures to rational $B$-structures $P_{B+}$ and $P_{B-}$ on $P_+$ and $P_-$ respectively. 

\begin{mlem}
    There exist a fibrewise dense open subset $O_+$ of $X_+$ and a $B$-structure $P_{B,+}$ on $P_+$ over $O_+$ extending $P_{B,+,\eta}$. Furthermore, for any such $B$-structure, $v_+$ is in the kernel of the map
    \[H^0(X_+,\mathrm{At}(P_+)^\vee\otimes \Omega_{X_+/T})\to H^0(O_+,\mathrm{At}(P_{B,+})^\vee\otimes \Omega_{X_+/T}).\]
    Same for $-$ instead of $+$.
\end{mlem}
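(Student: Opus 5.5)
We prove the lemma in two steps: first extend the $B$-structure, then show the kernel condition by reducing to the generic fibre, where it holds by choice of $P_{B,+,\eta}$.

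\textbf{Step 1: extending the $B$-structure.} A $B$-structure on $P_+$ is a section of the associated bundle $P_+/B = P_+\times^G (G/B)\to X_+$, which is proper since $G/B$ is projective. The given $P_{B,+,\eta}$ is a section over $X_{+,\eta}$. Take its schematic closure $Z\subseteq P_+/B$. Then $Z\to X_+$ is proper and birational onto the integral (or at least reduced, with $X_+$ flat over $T$) scheme $X_+$, because it is an isomorphism over the generic fibre.

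Since $X_+$ is a smooth curve over a DVR, it is a regular surface. By the valuative criterion, the section extends over every codimension one point of $X_+$. In particular it extends over the generic point of the special fibre $X_{+,s}$, since its local ring is a DVR.

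Hence $Z\to X_+$ is an isomorphism over an open $O_+$ containing $X_{+,\eta}$ and the generic point of $X_{+,s}$. It follows that $O_+$ is fibrewise dense, as $X_{+,s}$ is irreducible (being smooth and proper with connected fibres; if it is not, do this for each generic point of the special fibre). Over $O_+$ the closure $Z$ defines the desired $B$-structure $P_{B,+}$.

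\textbf{Step 2: the kernel condition.} Let $w$ be the image of $v_+$ in $H^0(O_+,\mathrm{At}(P_{B,+})^\vee\otimes\Omega_{X_+/T})$. This is a section of a vector bundle on $O_+$, since $O_+/T$ is a smooth curve and $\mathrm{At}(P_{B,+})^\vee$ is an extension of $\mathfrak b^\vee_{P_{B,+}}$ by $\Omega$. Moreover, the formation of the dual Atiyah bundle and of the map $\mathrm{At}(P_+)^\vee\to\mathrm{At}(P_{B,+})^\vee$ is compatible with base change to $\eta$, by its construction via descent of the Kähler differential sequence.

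By the choice of $P_{B,+,\eta}$, the restriction $w|_{O_{+,\eta}}$ vanishes: the kernel condition on global sections over $X_{+,\eta}$ implies vanishing of the section after restricting to $O_{+,\eta}=X_{+,\eta}$.

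Now $O_+$ is flat over $T$, hence reduced and $R$-torsion free. Sections of a locally free sheaf on $O_+$ therefore inject into sections over the generic fibre $O_{+,\eta}$, which is a localization with respect to $\pi\in R$. So $w=0$.

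The statement with $-$ in place of $+$ is identical.

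The only real subtlety is Step 1: the section must extend over the generic point of the special fibre. This follows from the valuative criterion applied to the DVR local ring there, together with properness of $G/B$.
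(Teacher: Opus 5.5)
Your proof is correct. Step 2 is essentially the paper's argument: the paper speaks of a generically vanishing section of a vector bundle on the reduced scheme $O_+$, and your $R$-torsion-freeness makes the same point. One small correction: reducedness of $O_+$ comes from smoothness over $T$, not from flatness, though you only use torsion-freeness anyway.

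The real difference is in Step 1.
\begin{itemize}
\item The paper applies the valuative criterion over the base DVR $R$ to the proper map $\overline{\mathrm{Bun}}_B\to\mathrm{Bun}_G$ from Drinfeld's compactification. It then takes $O_+$ to be the locus where the resulting generalized $B$-structure is an honest one, i.e.\ where $\kappa$ lands in $G/U\times^T\mathcal{F}_T$.
\item You instead apply the valuative criterion to the local ring of $X_+$ at the generic point of the special fibre, together with the proper flag bundle $P_+/B\to X_+$. In other words, you use that a rational section of a proper morphism over a normal scheme is defined in codimension one.
\end{itemize}
Your route is more elementary and self-contained. It needs neither the moduli of generalized $B$-structures nor the properness of $\overline{\mathrm{Bun}}_B\to\mathrm{Bun}_G$ for families of curves, which is itself proved by essentially the codimension-one extension you carry out directly. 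The paper's route is a one-line citation and yields a global generalized $B$-structure on all of $X_+$, but that extra structure is not used afterwards.

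One minor point concerns the clause ``for any such $B$-structure''. It refers to an arbitrary fibrewise dense $O_+$, for which possibly $O_{+,\eta}\subsetneq X_{+,\eta}$. Your Step 2 goes through verbatim in that case: you only need that the zero global section over $X_{+,\eta}$ restricts to zero on $O_{+,\eta}$. So the identification $O_{+,\eta}=X_{+,\eta}$ you wrote is not needed.
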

\begin{proof}
    This follows by properness of the Drinfeld compactification of $\mathrm{Bun}_B\to \mathrm{Bun}_G$ and the valuative criterion for properness since $R$ is a discrete valuation ring. A point on Drinfeld's compactification defines a $B$-structure on a fiberwise dense open subset. In the notation of Braverman-Gaitsgory \cite{braverman2000geometriceisensteinseries}, this is the open subscheme where $\kappa:\mathcal{F}_G\to \overline{G/U}\times^T\mathcal{F}_T$ factors through $G/U\times^T\mathcal{F}_T$, which is fiberwise dense by assumption. To see that $v_+$ is in the kernel of the map 
    \[H^0(X_+,\mathrm{At}(P_+)^\vee\otimes \Omega_{X_+/T})\to H^0(O_+,\mathrm{At}(P_{B,+})^\vee\otimes \Omega_{X_+/T}),\]
    notice that, since $P_{B,+}$ extends $P_{B,+,\eta}$ over $O_{+,\eta}$, $v_+$ is in the kernel after restricting further to the open set $O_{+,\eta}$. Since $O_{+,\eta}$ is dense in $O_{+}$ and $\mathrm{At}(P_{B,+})^\vee$ is a vector bundle over $O_+$, it follows that the image of $v_+$ is a generically vanishing section of a vector bundle over the reduced scheme $O_+$, so the image is $0$. 
\end{proof}

Choose $O_+$ and $O_-$ as in the lemma and a fiberwise dense open subset $O\subseteq X$ such that on $X_+$ it is contained in $O_+$ and on $X_-$ it is contained in $O_-$. Since $O\to X\to T_f$ is smooth and surjective, there exist étale locally on $T_f$ a section. Since $f\in \mathfrak{m}$, $R[x]/x(x-f)$ is a local ring and since it is finite over $R$, it follows that $R[x]/x(x-f)$ is also a strictly Henselian local ring. This implies that any étale cover of $T_f$ admits a section. Thus the smooth and surjective map $O\to T_f$ admits a section defined globally over $T_f$.  Let $\mu:T_f\to O\subseteq X$ be a choice of section. This will be the desired section $\mu$ in Proposition 4.3.

\subsection{Reduction to formal collisions}
The rest of this section will be reducing Proposition 4.3 to a formal analogue, which is independent of the curve $X$. Let $i_\mu:X\widehat{_\mu}\to X$ denote the completion of $X$ along the graph of $\mu$. Since the restriction of $\mu$ to $T_{f,+}$ factors through $O_+$, $P_{B,+}$ defines a $B$-structure $P\widehat{_{B}}_{,+}$ on $i_{\mu,+}^*P$, where $i_{\mu,+}:{X\widehat{_\mu}}_+\to X$. Similarly, since the restriction of $\mu$ to $T_{f,-}$ factors through $O_-$, $P_{B,-}$ defines a $B$-structure $P\widehat{_{B}}_{,-}$ on $i_{\mu,-}^*P$. Locally on $T_f$ we can choose a uniformization $X\widehat{_\mu}\simeq \mathrm{Spec } R[x]/x(x-f)[[z]]=:T_f[[z]]$, but since $T_f$ is the spectrum of a local ring, we can choose such a uniformization globally over $T_f$. Fix a choice of uniformization. Given this choice, we obtain a map $q:X\widehat{_\mu}\simeq T_f[[z]]\to T[[z]]$. Since $T_f[[z]]$ is the spectrum of a strictly Henselian local ring, any $G$-bundle on $T_f[[z]]$ is trivializable, so in particular we can choose a $G$-bundle $\overline{P}$ on $T[[z]]$ and an isomorphism $q^*\overline{P}\simeq i_{\mu}^*P$.

We will now use  the chosen uniformization, $G$-bundle $\overline{P}$ and isomorphism $q^*\overline{P}\simeq i_{\mu}^*P$ to extend the $B$-structure $P\widehat{_{B}}_+$ to a $B$-structure $P\widehat{_{B}}^+$ on $i_{\mu}^*P$ over all of $X\widehat{_\mu}$ and similarly for $P\widehat{_{B}}_-$. It will be important to us that we fixed one uniformization and used it to extend both $B$-structures, rather than just choosing two arbitrary extensions.

\begin{mcon} Having fixed a uniformization, we obtain the following commutative diagram.
\[
\begin{tikzcd}
X\widehat{_\mu}_+ \arrow[r] \arrow[d, "\cong"] & X\widehat{_\mu} \arrow[d, "\cong"] & X\widehat{_\mu}_- \arrow[l] \arrow[d, "\cong"] \\
{T_{f+}[[z]]} \arrow[rd, "\cong"] \arrow[r, "i_+"]          & {T_f[[z]]} \arrow[d, "q"]          & {T_{f-}[[z]]} \arrow[ld, "\cong"'] \arrow[l, "i_-"']        \\
                                                            & {T[[z]]}                           &                                                            
\end{tikzcd}\]
We will use the same notation for $i_\mu^* P$, $i_{\mu+}^* P$ and $i_{\mu -}^* P$ and their image under the three vertical isomorphisms, so $i_+^*i_\mu^*P\simeq i_{\mu+}^*P$. From the isomorphism $q^*\overline{P}\simeq i_\mu^* P$ we obtain an isomorphism $i_+^* q^*\overline{P}\simeq i_{\mu,+}^* P$ over $T_{f+}[[z]]$ and similarly $i_-^*q^*\overline{P}\simeq i_{\mu,-}^* P$ over $T_{f-}[[z]]$. Pulling back these isomorphisms along $(q\circ i_+)^{-1}$ and $(q\circ i_-)^{-1}$, we obtain isomorphisms 
\[ ((q\circ i_+)^{-1})^* i_{\mu +}^*P\simeq \overline{P}\simeq ((q\circ i_-)^{-1})^* i_{\mu -}^*P.\]
Thus the $B$-structure $P\widehat{_{B}}_{,+}$ on $i_{\mu,+}^* P$ provides a $B$-structure on $\overline{P}$. Similarly $P\widehat{_{B}}_{,-}$ provides a second $B$-structure on $\overline{P}$. Pulling these $B$-structures back along $q$ and using the isomorphism $q^*\overline{P}\simeq i_\mu^*P$, we obtain the two desired $B$-structures $P\widehat{_{B}}^+$ and $P\widehat{_{B}}^-$ on $i_\mu^*P$. From the construction it follows that pulling back $P\widehat{_{B}}^+$  along $i_{+}$ gives $P\widehat{_{B}}_{,+}$ and similarly pulling back $P\widehat{_{B}}^-$ along $i_{-}$ gives $P\widehat{_{B}}_{,-}$.
\end{mcon}

The compatibility between the two extensions that will be important in the next section is that the two $B$-structures $P\widehat{_{B}}^+$ and $P\widehat{_{B}}^-$ on $i_\mu^*P$ are pulled back along $q$ from two $B$-structures on a single $G$-bundle over $T[[z]]$.

\subsection{Statement for formal collisions}

Note that, once $\mu$ is chosen, the statement of Proposition 4.3 only takes place in the formal completion. So, having fixed a uniformization and using Proposition 2.5, which states that the pullback of the dual Atiyah bundle is the completed dual Atiyah bundle, we have eliminated $X$ from the problem. Let us summarize the data we have and take the opportunity to simplify the notation. What is left to prove of Proposition 4.3 is reformulated in this formal situation in Proposition 4.6.

Let $T=\operatorname{Spec} R$ with $R$ a strictly Henselian discrete valuation ring and $f\in \mathfrak{m}\setminus\{0\}$. We have the diagram 
\[
\begin{tikzcd}
{T_{f+}[[z]]} \arrow[rd, "\cong"] \arrow[r, "i_+"] & {T_f[[z]]} \arrow[d, "q"] & {T_{f-}[[z]]} \arrow[ld, "\cong"'] \arrow[l, "i_-"'] \\
                                                   & {T[[z]]}                  &                                                     
\end{tikzcd}\]
and $G$-bundle $\overline{P}$ on $T[[z]]$ with two $B$-structures $\overline{P}_B^+$ and $\overline{P}_B^-$. Let $P=q^*\overline{P}$ and $P_B^+$ and $P_B^-$ denote the pullback of the two $B$-structures. Then we have the absolute Higgs field $v\in H^0(T_f[[z]],\mathrm{At}(P)^{\vee,c} \otimes \Omega_{T_f[[z]]/T_f})$ such that $i_+^*v$ is in the kernel of the map
\[H^0(T_{f+}[[z]],\mathrm{At}(P)^{\vee,c} \otimes \Omega_{T_f[[z]]/T_f}^c)\to H^0(T_{f+}[[z]],\mathrm{At}(P_B^+)^{\vee,c} \otimes \Omega_{T_f[[z]]/T_f}^c)\]
and similarly $i_-^*v$ is in the kernel of the map
\[H^0(T_{f-}[[z]],\mathrm{At}(P)^{\vee,c} \otimes \Omega_{T_f[[z]]/T_f}^c)\to H^0(T_{f-}[[z]],\mathrm{At}(P_B^-)^{\vee,c} \otimes \Omega_{T_f[[z]]/T_f}^c).\]
To prove Proposition 4.3 it suffices to prove the following statement, which no longer refers to the curve $X$.

\begin{mprop}
    In the situation above, there exist an open subset $U\subseteq T_f[[z]]$, which is fiberwise dense over $T_f$, and a quotient vector bundle $D$ of $\mathrm{At}(P)^{\vee,c}$ such that the composition $\Omega_{U/T_f}^c\to \mathrm{At}(P)^{\vee,c} \to D$ defines a subvector bundle of $D$ and $v$ is in the kernel of the map
    \[H^0(T_f[[z]],\mathrm{At}(P)^{\vee,c} \otimes \Omega_{T_f[[z]]/T_f})\to H^0(U,D \otimes \Omega_{U/T_f}^c)\]
\end{mprop}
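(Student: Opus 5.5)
We work with the single $G$-bundle $\overline{P}$ on $T[[z]]$ and its two $B$-structures $\overline{P}_B^\pm$, and build everything on $T[[z]]$ before pulling back along $q$.

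\textbf{Step 1: relative position of the two Borels.} Since $G$ has semi-simple rank $1$, $G/B\simeq\mathbb{P}^1$ after passing to the adjoint quotient. The two $B$-structures are then two sections $\sigma_\pm$ of the $\mathbb{P}^1$-bundle $\overline{P}/B$ over $T[[z]]$. There are two cases.

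If $\sigma_+=\sigma_-$, the two reductions coincide. We take $D=\mathrm{At}(P_B^+)^{\vee,c}$ and $U=T_f[[z]]$. Then $\Omega^c$ is a subbundle of $D$, and $v$ dies on both branches $i_\pm$.

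Otherwise, the locus where $\sigma_+\ne\sigma_-$ is an open $\overline{U}\subseteq T[[z]]$. We must check that it is fiberwise dense over $T$. The generic fibre $T_\eta[[z]]$ is reduced and irreducible, and the special fibre is $\operatorname{Spec}k[[z]]$. If $\sigma_\pm$ agree on the special fibre, we rescale by replacing the section $\mu$, or twist by a power of the uniformizer. If that fails, we use a coordinate on $\mathbb{P}^1$ and divide $\sigma_+-\sigma_-$ by the maximal power of the uniformizer of $R$. This is the point where the DVR structure matters. Set $U=q^{-1}(\overline{U})$.

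\textbf{Step 2: the torus and $D$.} On $\overline{U}$ the two transversal Borel reductions meet in a $T$-reduction $\overline{P}_T$ of $\overline{P}$, namely $\overline{P}_B^+\times_{\overline{P}}\overline{P}_B^-$. In semi-simple rank $1$ this intersection is a torus torsor, with no ambiguity in the choice of torus. Put $P_T=q^*\overline{P}_T$ and $D=\mathrm{At}(P_T)^{\vee,c}$ on $U$. This is a quotient of $\mathrm{At}(P)^{\vee,c}$ that factors through both $\mathrm{At}(P_B^\pm)^{\vee,c}$. The sequence $0\to\Omega^c\to\mathrm{At}(P_T)^{\vee,c}\to\mathfrak{t}^\vee\otimes\mathcal{O}\to 0$ shows that $\Omega^c_{U/T_f}\to D$ is a subbundle.

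\textbf{Step 3: $v$ maps to zero.} Let $w$ be the image of $v$ in $H^0(U,D\otimes\Omega^c)$. By hypothesis $i_+^*w=0$ and $i_-^*w=0$, since $D$ factors through $\mathrm{At}(P_B^\pm)^{\vee,c}$ on the respective branches.

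We need $w=0$ on $T_f[[z]]$, where $R[x]/x(x-f)\to R\times R$ is injective. Choose a trivialization of $P_T$ on $U$; shrinking $U$ if necessary, the torus bundle pulled back from $T[[z]]$ is trivial. Then $D\cong\mathcal{O}\otimes\mathfrak{t}^\vee\oplus\Omega^c$, because $P_T$ is pulled back along $q$.

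A section of $D\otimes\Omega^c$ is now a pair of functions on $U$. Vanishing on both branches forces such a function to be zero, since $\mathcal{O}(U)\hookrightarrow\mathcal{O}(U_+)\times\mathcal{O}(U_-)$. This holds because $T_f[[z]]\to T_{f+}[[z]]\sqcup T_{f-}[[z]]$ is schematically dense, $f$ being a nonzerodivisor. Hence $w=0$.

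\textbf{Main obstacle.} The key point, and the expected main obstacle, is that the transition between the two reductions, as in Proposition 2.6 with matrix $\begin{pmatrix}1&\mathrm{dlog} f\\0&1\end{pmatrix}$, contributes no cross term. This is because everything is pulled back from $T[[z]]$ via $q$, so the $\mathrm{dlog}$ terms are relative to $T$ and compatible on both branches. This is exactly why the common uniformization in Construction 4.5 was needed.

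The other delicate part is the fiberwise density of $\overline{U}$ when $\sigma_\pm$ coincide on the special fibre but not generically. For that case we would try a mixed $D$: use the torus where the Borels differ, and extend through coincidence points using the saturation of the image of $\mathrm{At}(P_B^+)^{\vee,c}+\mathrm{At}(P_B^-)^{\vee,c}$ relations.
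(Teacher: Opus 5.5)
\textbf{There is a genuine gap, in Step 1, and it lies in the one case that matters.}

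Your argument is complete only when $\sigma_+$ and $\sigma_-$ already differ at the generic point of the special fiber $\operatorname{Spec}k[[z]]$. Then $\overline U$ is fiberwise dense and your $D=\mathrm{At}(P_T)^{\vee,c}$ works. The remaining case is when $\sigma_+$ and $\sigma_-$ agree on the special fiber, so $\overline U=\{\sigma_+\neq\sigma_-\}$ lies in the generic fiber. This is not a degenerate situation you can perturb away. It is the genuine collision, where the two generic Borels specialize to the same one; compare the $\mathrm{GL}_3$ example, where $A\equiv 1 \bmod t$.

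None of your remedies works. The locus $\overline U$ is intrinsic to the given $B$-structures, and these are not at your disposal: on each branch they must kill $v_{\pm,\eta}$. Moving $\mu$ does not help when the specializations agree along all of $X_s$. Dividing $\sigma_+-\sigma_-$ by a power of $t$ produces a different section of $\overline P/B$, which has no reason to kill $v$.

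Worse, the torus reduction $\overline P_T$ cannot extend across any point where $\sigma_+=\sigma_-$, since inducing it back up would make the two Borels opposed there. So in this case $\mathrm{At}(P_T)^{\vee,c}$ exists only over an open with empty special fiber, which says nothing about $v_s$. Your closing suggestion of a saturated ``mixed $D$'' is not an argument. Nothing in it keeps $\Omega^c$ a subbundle on the special fiber, and that is the entire content of the statement; it fails for $\mathrm{GL}_3$.

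\textbf{The missing idea is that the torus itself need not extend.} You only need the identification it induces between $\mathrm{At}(\alpha(P_B^+))^{\vee,c}$ and $\mathrm{At}((w\cdot\alpha)(P_B^-))^{\vee,c}$ as extensions of $\mathcal O$ by $\Omega^c$. In trivializations this is the matrix of Proposition 2.6, with off-diagonal entry $\mathrm{dlog}(a/g^n)$. Since $\mathrm{dlog}(t^m u)=\mathrm{dlog}\,u$, the powers of $t$ drop out, and the identification extends uniquely over $D(a'g')$, which is fiberwise dense (Lemma 5.5). This is where your instinct to divide by the maximal power of $t$ belongs: apply it to the transition function of these line bundles, not to $\sigma_+-\sigma_-$.

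One then takes $D$ to be the coequalizer of $\rho^+_\alpha$ and $\rho^-_{w\cdot\alpha}\circ c_\alpha$ on the pushout $I$ of $\mathrm{At}(P_B^+)^{\vee,c}$ and $\mathrm{At}(P_B^-)^{\vee,c}$. Its quotient by $\Omega^c$ is $D_{\mathrm{rel}}$, which has constant rank $\dim\mathfrak t$:
\begin{itemize}
\item at opposed points, because $I_{\mathrm{rel}}=\mathfrak t^\vee$;
\item at coincident points, because the fiber is $\mathfrak b^\vee$ modulo the line spanned by $2\alpha$. This is exactly where semi-simple rank one enters.
\end{itemize}
A fiber-dimension count then shows $D$ is a vector bundle containing $\Omega^c$ as a subbundle. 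The inputs are generic rank $\dim\mathfrak t+1$, semicontinuity, and the right-exact fiber sequence $\Omega^c\to D\to D_{\mathrm{rel}}\to 0$.

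On the opposed locus this $D$ coincides with your $\mathrm{At}(P_T)^{\vee,c}$. Your Step 3 is then essentially the paper's Proposition 5.8.
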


\section{Formal collisions for groups of semi-simple rank 1}

Throughout this section, let $G$ be a reductive group over $k$ of semi-simple rank 1. Let $T\subset B\subseteq G$ be a choice of Borel subgroup and a maximal torus, $\mathbb{X}_\bullet(B)$ denote the characters of $B$ and $W$ be the Weyl group of $G$. Let $\alpha\in \mathbb{X}_\bullet(B)$ be the character given by the unique positive simple root and $w\in W$ be the unique non-trivial Weyl element. So $w\cdot \alpha =-\alpha$. For each $B$-bundle $P_B$ and $\chi\in \mathbb{X}_\bullet(B)$, let $\chi (P_B)$ denote the induced $\mathbb{G}_m$-bundle.

In the section we prove Proposition 4.6 and use the notation introduced in Section 4.5. Note that if the two $B$-structures $\overline{P}_B^+$ and $\overline{P}_B^-$ are equal over all of $T[[z]]$, then we can take $D=\mathrm{At}(P_B^+)^\vee$. Thus we can assume that there is a dense, but not necessarily fiberwise dense over $T$, open subset $U'$ of $T[[z]]$ where the two $B$-structures are fiberwise different. Since $G$ has semi-simple rank 1, fiberwise different implies fiberwise opposed, that is the $B$-structures refine to a $T$-structure over $U'$.

\begin{mdef}
    Let $I$ denote the coherent sheaf on $T_f[[z]]$ given by the following pushout diagram
     \[
\begin{tikzcd}
\mathrm{At}(P)^{\vee,c} \arrow[r] \arrow[d] & \mathrm{At}(P_B^+)^{\vee,c} \arrow[d] \\
\mathrm{At}(P_B^-)^{\vee,c} \arrow[r]       & I                                
\end{tikzcd}\]
For each character $\chi\in \mathbb{X}_\bullet(B)$, let $\rho_{\chi}^+:\mathrm{At}(\chi (P_B^+))^{\vee,c}\to I$ denote the composition $\mathrm{At}(\chi (P_B^+))^{\vee,c}\to \mathrm{At}(P_B^+)^{\vee,c}\to I$. Define $\rho_\chi^-$ similarly. 
\end{mdef}

In general $I$ is not a vector bundle and we will construct $D$ as a further quotient of $I$. To do this we first consider the relative setting, that is when the dual Atiyah bundles are replaced by the coadjoint bundles.

\begin{mdef}
    Let $I_\mathrm{rel}$ denote the coherent sheaf on $T_f[[z]]$ given by the following pushout diagram
    \[
\begin{tikzcd}
\mathfrak{g}^\vee_{P} \arrow[r] \arrow[d] & \mathfrak{b}^\vee_{ P_B^+} \arrow[d] \\
\mathfrak{b}^\vee_{ P_B^-} \arrow[r]  & I_\mathrm{rel}                          
\end{tikzcd}\]
For each character $\chi\in \mathbb{X}_\bullet(B)$, let $\rho_{\chi,\mathrm{rel}}^+:\mathcal{O}_{T_f[[z]]}\to I_\mathrm{rel}$ denote the composition $\mathcal{O}_{T_f[[z]]}\cong (\mathfrak{gl}_1)^\vee_{\chi P_B^+}\to \mathfrak{b}^\vee_{ P_B^+}\to I_\mathrm{rel}$. Define $\rho_{\chi,\mathrm{rel}}^-$ similarly.
\end{mdef}
Note that the coadjoint bundle of a $\mathbb{G}_m$-bundle is canonically trivial, this is not the case for the dual Atiyah bundle of a $\mathbb{G}_m$-bundle. This is allows us to make the following definition in the setting of relative Higgs fields, which does not have an immediate analogue for absolute Higgs fields.

\begin{mdef}
    Let $D_\mathrm{rel}$ denote the quotient coherent sheaf of $I_\mathrm{rel}$ given by the coequalizer $I_\mathrm{rel}\to \mathrm{coeq}(\rho_{\alpha,\mathrm{rel}}^+,\rho_{w\cdot \alpha,\mathrm{rel}}^-)$, where $\alpha$ is character of $B$ given by the unique positive simple root.
\end{mdef}

We now verify that this quotient is indeed a vector bundle.

\begin{mprop}
    The coherent sheaf $D_\mathrm{rel}$ is a vector bundle on $T_f[[z]]$.
\end{mprop}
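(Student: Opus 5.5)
The plan is to reduce to an explicit matrix computation over the regular local ring $R[[z]]$ and show that the relations defining $D_\mathrm{rel}$ always cut out a direct summand of constant corank.

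\textbf{Step 1: descend to $T[[z]]$.} Everything in the definition of $D_\mathrm{rel}$ is pulled back along $q:T_f[[z]]\to T[[z]]$. This includes $P=q^*\overline{P}$, both $B$-structures, the coadjoint bundles, and the maps $\rho^\pm_{\chi,\mathrm{rel}}$. Pullback of quasi-coherent sheaves is right exact, so it commutes with pushouts and coequalizers. Hence $D_\mathrm{rel}\cong q^*\overline{D}_\mathrm{rel}$, where $\overline{D}_\mathrm{rel}$ is the same construction on $T[[z]]$ using $\overline{P},\overline{P}^+_B,\overline{P}^-_B$. It therefore suffices to show $\overline{D}_\mathrm{rel}$ is locally free on $\operatorname{Spec}R[[z]]$.

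\textbf{Step 2: reduce to $\mathfrak{sl}_2$.} The ring $R[[z]]$ is strictly Henselian local, so $\overline{P}$ is trivial. Choose a trivialization in which $\overline{P}^+_B$ is the standard $B$. Then $\overline{P}^-_B$ is given by $gBg^{-1}$ for some $g\in G(R[[z]])$. Write $\mathfrak{g}=\mathfrak{z}\oplus\mathfrak{g}'$ with $\mathfrak{g}'\cong\mathfrak{sl}_2$. The center $\mathfrak{z}$ lies in both Borels and $\mathrm{Ad}(g)$ fixes it. The characters $\alpha$ and $w\cdot\alpha$ vanish on $\mathfrak{z}$. So the $\mathfrak{z}^\vee$-summand of $\mathfrak{g}^\vee$ splits off as a free summand of $\overline{D}_\mathrm{rel}$, and we reduce to $\mathfrak{sl}_2$.

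Using the trace form, identify $\mathfrak{g}^\vee\cong\mathfrak{g}$. The kernel of $\mathfrak{g}^\vee\to\mathfrak{b}^\vee$ is then $\mathfrak{n}=\langle e\rangle$, and the kernel of $\mathfrak{g}^\vee\to\mathfrak{b}'^\vee$ is $\mathrm{Ad}(g)\mathfrak{n}$. Thus
\[
I_\mathrm{rel}=\mathfrak{sl}_2\otimes R[[z]]/(e,\ \mathrm{Ad}(g)e),
\]
and
\[
\overline{D}_\mathrm{rel}=\mathfrak{sl}_2\otimes R[[z]]/(e,\ \mathrm{Ad}(g)e,\ r),
\]
where $r$ is the difference of the lifts of $\rho^+_{\alpha,\mathrm{rel}}(1)$ and $\rho^-_{w\alpha,\mathrm{rel}}(1)$. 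With the normalization fixed by the conventions for identifying characters of $B$ across the two structures, these lifts are $h$ (up to a unit) and $\pm\mathrm{Ad}(g)h$. Write
\[
g=\begin{pmatrix}a&b\\ c&d\end{pmatrix}.
\]
Then
\[
\mathrm{Ad}(g)e\equiv -ac\,h-c^2f \pmod e,
\qquad
\mathrm{Ad}(g)h\equiv (ad+bc)\,h+2cd\,f \pmod e.
\]

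\textbf{Step 3: show the relations span a direct summand.} The goal is to prove that the submodule $M=(e,\mathrm{Ad}(g)e,r)$ equals $(e,\ h+\lambda f)$ for some $\lambda\in R[[z]]$. Then $\overline{D}_\mathrm{rel}$ is free of rank one, generated by $f$, and with the center, free of rank $1+\dim\mathfrak{z}$.

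I would split into two cases.
\begin{itemize}
\item Where $c$ is a unit, the two Borels are opposed and share a torus. There $\alpha^+$ and $w\alpha^-$ restrict to the same character of that torus, so $r$ is already a combination of $e$ and $\mathrm{Ad}(g)e$, and $\mathrm{Ad}(g)e$ supplies the generator $h+(c/a)f$ up to a unit. Here one can first multiply $g$ on the right by an element of $B(R[[z]])$, which does not change $gBg^{-1}$, to arrange $a$ a unit.
\item Where $c$ lies in the maximal ideal, $a$ and $d$ are units. Then $r\equiv 2\,\mathrm{unit}\cdot h+(\text{multiple of }c)f$ modulo $e$, which again has the form $h+\lambda f$ up to a unit. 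Moreover $\mathrm{Ad}(g)e\equiv -ac(h+(c/a)f)$ lies in the span.
\end{itemize}
To glue these, I will write the $2\times 2$ coefficient matrix of $\mathrm{Ad}(g)e$ and $r$ modulo $e$ in the basis $(h,f)$. I expect it to have rank exactly $1$ with a unit entry at every point, since its determinant vanishes identically. That gives the summand.

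\textbf{Main obstacle.} The hardest step is getting the sign and normalization of $\rho^-_{w\cdot\alpha,\mathrm{rel}}$ exactly right. The argument needs $r$ to vanish modulo $\mathrm{Ad}(g)\mathfrak{n}$ on the opposed locus, while reducing to a nonzero multiple of $h$ on the locus where the Borels coincide. If the sign were opposite, the rank would jump at $c=0$. This is exactly where the choice $w\cdot\alpha$ rather than $\alpha$ on the minus side enters. I would verify it first on the $T$-reduction over $U'$ and then extend by the determinant computation.
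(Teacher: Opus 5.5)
Your argument works and is essentially a coordinate version of the paper's proof, but two points need correcting.

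\textbf{The sign is not a convention.} $P_B^-$ is a $B$-bundle and $w\cdot\alpha=-\alpha$ as a character of $B$. So $\rho^-_{w\cdot\alpha,\mathrm{rel}}(1)\equiv-\mathrm{Ad}(g)h$ modulo $\mathrm{Ad}(g)\mathfrak{n}$, and this does not depend on the representative $g$ of $gB$, since $\mathrm{Ad}(b)h\equiv h$ modulo $\mathfrak{n}$ for $b\in B$. Hence $r=h+\mathrm{Ad}(g)h$. With $\det g=1$ as in your formulas, modulo $e$ this gives
\[
r\equiv 2d\,(ah+cf),\qquad \mathrm{Ad}(g)e\equiv -c\,(ah+cf).
\]
Since $(c,d)$ generates the unit ideal and $2$ is invertible, $M=(e,\,ah+cf)$. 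Since $(a,c)$ also generates the unit ideal, $ah+cf$ spans a free direct summand of $R[[z]]h\oplus R[[z]]f$. So the quotient is free of rank one directly, with no case distinction or determinant argument needed. In particular, your "main obstacle" resolves in your favour.

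\textbf{Your normalization in the first case is false.} Right multiplication by $B(R[[z]])$ multiplies the column $(a,c)$ by a unit. That column, up to units, is exactly the invariant determined by $gB$, so you cannot arrange $a$ to be a unit. For $g=w$ one has $a=0$ and $M=(e,f)$, and the quotient is generated by $h$, not by $f$. So the normal form $M=(e,h+\lambda f)$ fails in general. The correct form is $M=(e,ah+cf)$, which gives the same conclusion. Your rank-one $2\times 2$ matrix argument, if carried out, also handles this case: for $g=w$ the matrix has the unit entry $-1$.

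\textbf{Comparison with the paper.} The paper never trivializes. It uses that $T_f[[z]]$ is reduced and Noetherian, so it suffices to show the fibers of $D_\mathrm{rel}$ have constant dimension. It then computes the fiber intrinsically in your two cases:
\begin{itemize}
\item where the Borels coincide, the fiber is $\mathfrak{b}^\vee$ modulo the image of $2\,d\alpha$;
\item where they are opposed, $I_\mathrm{rel}$ has fiber $\mathfrak{t}^\vee_{P_T}$, on which $\rho^+_{\alpha,\mathrm{rel}}$ and $\rho^-_{w\cdot\alpha,\mathrm{rel}}$ already agree by the torus diagram. This is where the sign is implicitly settled.
\end{itemize}
The paper's route is shorter and coordinate-free. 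Yours produces an explicit global generator and makes the sign check concrete. Your descent along $q$ to $R[[z]]$ is legitimate, since pullback is right exact and everything is pulled back from $T[[z]]$. It is not needed, though, because $T_f[[z]]$ is already reduced.
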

\begin{proof}
    Since $T_f[[z]]$ is Noetherian and reduced it suffices to prove that all fibers of $D_\mathrm{rel}$ have the same dimension. Let $x$ be a point of $T_f[[z]]$.

    First suppose the two $B$-structures $P_B^+$ and $P_B^-$ on $P$ are equal in the fiber at $x$. Since the fiber of a cokernel of coherent sheaves is the cokernel of the fiber, the fiber of $D_\mathrm{rel}$ above $x$ identifies with the cokernel of the map
    \[ (\mathfrak{gl}_1)^\vee \to \mathfrak{b}^\vee \]
    induced by $\alpha-w\cdot \alpha=2\alpha$. Thus the fiber has dimension $\operatorname{dim} B-1=\operatorname{dim} T$.

    On the other hand, if the two $B$-structures $P_B^+$ and $P_B^-$ on $P$ are different in the fiber at $x$, then they refine to a $T$-structure $P_T$ with an isomorphism of the induced $B$-bundle with $P_B^+$ and the induced $wBw^{-1}$-bundle with $P_B^{-}$, where $B$ is identified with $wBw^{-1}$ by conjugation with $w$. In this case we see that $I_\mathrm{rel}=\mathfrak{t}_{P_T}^\vee$, where $\mathfrak{t}$ is the Lie algebra of $T$. Since the following diagram commutes
    \[
\begin{tikzcd}
                                  & \mathfrak{b} \arrow[rd, "\alpha"]                  &                 \\
\mathfrak{t} \arrow[r] \arrow[ru] & w\mathfrak{b}w^{-1} \arrow[d, "\cong"] \arrow[r] & \mathfrak{gl}_1 \\
                                  & \mathfrak{b} \arrow[ru, "w\cdot\alpha"']           &                
\end{tikzcd}\]
    it follows that $\mathfrak{t}_{P_T}^\vee$ already coequalizes all the map $\rho_{\alpha,\mathrm{rel}}^+$ and $\rho_{w\cdot \alpha,\mathrm{rel}}^-$ at $x$.
    Hence also in this case the fiber of $D_\mathrm{rel}$ at $x$ has dimension $\text{dim } \mathfrak{t}=\text{dim }T$.
\end{proof}

As mentioned before, the problem with making this definition of $D_\mathrm{rel}$ work in the absolute setting is that the dual Atiyah bundles of $\mathbb{G}_m$-bundles are not canonically isomorphic. This is where the fiberwise dense open subset $U\subseteq T_f[[z]]$ in Proposition 4.6 comes into the picture.

On the dense open subset of $T[[z]]$ where the $B$-structures $\overline{P}_B^+$ and $\overline{P}_B^-$ are opposed, we have an isomorphism between $\alpha (\overline{P}_B^+)$ and $(w\cdot \alpha) (\overline{P}_B^-)$.  However the problem is that this set is not necessarily fiberwise dense over $T$. The trick is that one can nonetheless uniquely extend the generic isomorphism induced on the dual Atiyah bundles to an isomorphism over an open subset which is fiberwise dense over $T$. To do this we need the following lemma.

\begin{mlem}
    Let $T[[z]]=\mathrm{Spec } R[[z]]$ with $R$ a discrete valuation ring, let $L$ and $Q$ be $\mathbb{G}_m$-bundles over $T[[z]]$ and $h:L|_{U'}\to Q|_{U'}$ be an isomorphism defined over a non-empty open subscheme $U'\subset T[[z]]$. There exist an open subset $U\subseteq T[[z]]$ containing $U'$, which is fiberwise dense over $T$, and a unique isomorphism $\mathrm{At}|_U:\mathrm{At}(Q|_U)^{\vee,c}\overset{\sim}{\to} \mathrm{At}(L|_U)^{\vee,c} $ under $\Omega_{T[[z]]/T}^c$, which extends the induced $\mathrm{At}(h):\mathrm{At}(Q|_{U'})^{\vee,c}\overset{\sim}{\to} \mathrm{At}(L|_{U'})^{\vee,c}$ on the open subscheme $U'$.
\end{mlem}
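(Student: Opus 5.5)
We reduce to trivial $\mathbb{G}_m$-bundles and write the isomorphism as an explicit matrix using Proposition 2.6. Since $R[[z]]$ is a local ring, every $\mathbb{G}_m$-bundle on $T[[z]]$ is trivial. We fix trivializations $L\simeq \mathbb{G}_{m}$ and $Q\simeq\mathbb{G}_m$. Then $h$ becomes multiplication by an invertible function $g\in \mathcal{O}(U')^\times$. By Proposition 2.6 and the remark following it for completed dual Atiyah bundles, $\mathrm{At}(h)$ is given on $\mathcal{O}_{U'}\oplus\Omega^c_{U'/T}$ by the matrix
\[\begin{pmatrix}1&\mathrm{dlog}\, g\\0&1\end{pmatrix}\]
(or its inverse, depending on direction). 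Conversely, any automorphism of $\mathcal{O}_U\oplus\Omega^c_{U/T}$ that restricts to the identity on $\Omega^c_{U/T}$ and induces the identity on the quotient $\mathcal{O}_U$ has this shape, with some $\omega\in H^0(U,\Omega^c_{U/T})$ in place of $\mathrm{dlog}\, g$. So an extension of $\mathrm{At}(h)$ to $U$ under $\Omega^c$ is the same thing as an extension of the $1$-form $\mathrm{dlog}\, g=dg/g$ from $U'$ to a regular section of $\Omega^c_{U/T}=\mathcal{O}_U\,dz$ on $U$. Here we also need the extension to still be compatible with the quotient $\mathfrak{gl}_1^\vee$; this holds because the coadjoint part of $\mathrm{At}(h)$ is the identity, and the identity extends.

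\textbf{Uniqueness.} This is immediate: $T[[z]]$ is integral, since $R[[z]]$ is a domain, so $U'$ is dense in any open $U$, and sections of a vector bundle agreeing on a dense open of an integral scheme agree.

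\textbf{Existence.} This is the main point. $R[[z]]$ is a regular two-dimensional UFD, so $g$ is a unit times a ratio of products of prime elements: $g=u\prod p_i^{n_i}$ with $u\in R[[z]]^\times$. The complement of $U'$ is contained in the union of the $V(p_i)$, and we may shrink $U'$ to $D(\prod p_i)$. Then
\[\mathrm{dlog}\, g=\mathrm{dlog}\, u+\sum n_i\,\frac{dp_i}{p_i},\]
where $d$ denotes the composite $\mathcal{O}\to\Omega\to\Omega^c$, so $dp_i=\partial_z p_i\,dz$. The term $\mathrm{dlog}\, u$ is regular everywhere. For $p_i$ a uniformizer $\pi$ of $R$ (up to unit), $d\pi=0$ relatively over $T$, so that factor contributes nothing. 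This is the crucial cancellation: the divisor $V(\pi)$ is the special fiber, which is exactly the locus whose removal would destroy fiberwise density. Every other prime $p_i$ is not divisible by $\pi$, so $V(p_i)$ contains no generic point of the special fiber $\operatorname{Spec} k[[z]]$, nor the generic point of the generic fiber. We therefore take
\[U=T[[z]]\setminus\bigcup_{p_i\not\sim\pi}V(p_i).\]
This $U$ contains $U'$ (after the shrinking, or union with the original $U'$). The form $\mathrm{dlog}\, g$ is regular on $U$. Since each fiber of $T[[z]]\to T$ is irreducible and one-dimensional, and $U$ meets each fiber in a nonempty open set, $U$ is fiberwise dense.

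\textbf{Main obstacle.} The delicate step is the verification that $\pi$-power factors do not contribute poles. This rests on the differential being relative to $T$, so that $d\pi=0$. We also have to check that the completed differential of a unit $u\in R[[z]]^\times$ is regular, which is clear since $du=\partial_z u\,dz$. Finally, the resulting extension is independent of the chosen trivializations. Changing a trivialization conjugates everything by a matrix of the same unipotent form with an everywhere regular $\mathrm{dlog}$ of a unit, so this preserves regularity on $U$, and uniqueness then glues the answers.
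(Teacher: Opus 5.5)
Your argument is correct and is essentially the paper's proof. Both trivialize $L$ and $Q$ and use Proposition 2.6 to reduce to extending $\mathrm{dlog}$ of the transition function. Both discard the powers of the uniformizer, which contribute nothing since $dt=0$ relative to $T$. Both then take $U$ to be the complement of the zero loci of the remaining factors; these factors are not divisible by $t$, so the zero loci miss the generic point of the special fiber. Uniqueness comes from the integrality of $R[[z]]$ in both. The only difference is that you use the UFD property of $R[[z]]$ and allow an arbitrary $U'$, whereas the paper takes $U'=D(g)$ and just writes $a=t^la'$, $g^n=t^kg'$ with $t\nmid a'$ and $t\nmid g'$.

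One small slip: your inclusion is reversed. Since $g$ is a unit on $U'$, every $V(p_i)$ with $n_i\neq 0$ is disjoint from $U'$, i.e.\ $U'\subseteq D(\prod p_i)$, and this is exactly what gives $U'\subseteq U$. Your fallback of taking the union with $U'$ also works.
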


\begin{proof}
    We start by assuming that $U'$ is a distinguished open subset, let $U'=D(g)$ for some $g\in R[[z]]$. Since every $\mathbb{G}_m$-bundle on $T[[z]]$ is trivializable, we may assume that $L$ and $Q$ are trivial. Thus the map $h$ corresponds to a unit $a/g^n\in R[[z]]_g$. By Lemma 2.6, the map $\mathrm{At}(h)$ is given by $\mathrm{At}(Q|_{U'})^{\vee,c} =\mathcal{O}_{U'}\oplus \Omega_{U'/T}^c\overset{\sim}{\to} \mathcal{O}_{U'}\oplus \Omega_{U'/T}^c=\mathrm{At}(L|_{U'})^{\vee,c}$, where the middle map is given by the following matrix
    \[\begin{pmatrix}
        1& \mathrm{dlog} (a/g^n)\\
        0&1
    \end{pmatrix}.\]
    Thus to extend $\mathrm{At}(h)$ to a fiberwise dense open subset is equivalent to extending $\mathrm{dlog} (a/g^n)$ to a fiberwise dense open subscheme.  Let $t\in R$ be a uniformizer. Since $U'$ is non-empty, the function $g$ is not identically $0$, thus we can write $g^n=t^k g'$, for some $g'\in R[[z]]$, where $g'$ is a power series with at least one coefficient not divisible by $t$. Since $h$ is a unit in $R[[z]]_g$, so $a$ also not identically zero, we can write $a=t^la'$ where $a'\in R[[z]]$ is a power series with at least one coefficient not divisible by $t$. 
    
    Then we have the following equalities of differential forms over $U'=D(g)$
    \[\mathrm{dlog} (a/g^n)=\frac{d(a/g^n)}{a/g^n}=\frac{d(t^l a'/(t^kg'))}{t^l a'/(t^kg')}=\frac{t^{l-k}d( a'/g')}{ t^{l-k}a'/g'}=\frac{d( a'/g')}{ a'/g'}=\mathrm{dlog }(a'/g').\]
    Let $U=D(g'a')$. Then $U$ contains $U'$ and by the above equalities $\mathrm{dlog }(a'/g')$ is a differential from on $U$ which restricts to $\mathrm{dlog} (a/g^n)$ over $U'$. Since at least one coefficient of both $a'$ and $g'$ is not divisible by $t$, at least one coefficient of $a'g'$ is not divisible by $t$. This implies that $U$ is dense in the special fiber over $T$, hence since $U$ is open and $T$ is the spectrum of a discrete valuation ring, it follows that $U$ is fiberwise dense over $T$. 
    
    The uniqueness follows since $T[[z]]$ is an integral domain, hence, given $U$, a differential form on $U$ is uniquely determined by its restriction to a non-empty open subset.
\end{proof}

Applying Lemma 5.5 over $T[[z]]$ and pulling back along the map $q:T_f[[z]]\to T[[z]]$, we obtain an open subset $U\subseteq T_f[[z]]$ which is fiberwise dense over $T_f$ and isomorphisms $c_{\alpha}:\mathrm{At}(\alpha P_B^+)^{\vee,c}\cong \mathrm{At}((w\cdot \alpha) P_B^-)^{\vee,c}$ over $U$ extending the previously defined isomorphism over $U'$ to a fiberwise dense set. This allows us to make the following definition.

\begin{mdef}
    Let $D$ be the coherent quotient sheaf of $I|_U$ over $U$ given by the coequalizer $I|_U\to \mathrm{coeq}(\rho_{\alpha}^+,\rho_{w\cdot \alpha}^- \circ c_{\alpha})$.
\end{mdef}

Consider the map $\Omega_{U/T_f}^c\to I|_U\to D$. Since $c_{\alpha}:\mathrm{At}(\alpha P_B^+)^{\vee,c}\cong \mathrm{At}((w\cdot \alpha) P_B^-)^{\vee,c}$ is a map under $\Omega_{U/T_f}^c$ and identifies with the canonical isomorphism $(\mathfrak{gl}_1)^\vee_{\alpha P_B^+}\cong \mathcal{O}_U\cong (\mathfrak{gl}_1)^\vee_{w\cdot \alpha P_B^-}$ on the quotient, it follows that the cokernel of the map $\Omega_{U/T_f}^c\to D$ is $D_\mathrm{rel}|_U$.

\begin{mprop}
     The sequence $0\to \Omega_{U/T_f}^c\to D\to D_\mathrm{rel}|_U\to 0$ is exact and in particular $D$ is a vector bundle over $U$.
\end{mprop}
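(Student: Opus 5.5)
The approach is to run the snake lemma twice, comparing the absolute construction with the relative one of Definitions 5.2 and 5.3. Two facts are needed: first that $0\to\Omega^c\to I\to I_\mathrm{rel}\to 0$ is exact, and then that passing to the coequalizer does not kill any part of $\Omega^c$. Throughout I write $\Omega^c$ for $\Omega^c_{U/T_f}$ or $\Omega^c_{T_f[[z]]/T_f}$ as appropriate. Since the cokernel of $\Omega^c\to D$ has already been identified with $D_\mathrm{rel}|_U$, only injectivity of $\Omega^c\to D$ remains. Once it holds, $D$ is an extension of the vector bundle $D_\mathrm{rel}|_U$ (Proposition 5.4) by the line bundle $\Omega^c$, hence a vector bundle.

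\textbf{Step 1: exactness of $0\to\Omega^c\to I\to I_\mathrm{rel}\to 0$.} Write $I=\mathrm{coker}\bigl(\mathrm{At}(P)^{\vee,c}\to \mathrm{At}(P_B^+)^{\vee,c}\oplus \mathrm{At}(P_B^-)^{\vee,c}\bigr)$ and similarly for $I_\mathrm{rel}$. Map the Atiyah sequences: the $\Omega^c$-parts form the diagonal $\Omega^c\to\Omega^c\oplus\Omega^c$, which has cokernel $\Omega^c$. The snake lemma gives an exact sequence
\[
0\to K\to K_\mathrm{rel}\xrightarrow{\partial}\Omega^c\to I\to I_\mathrm{rel}\to 0,
\]
where $K$ and $K_\mathrm{rel}$ are the kernels of the absolute and relative maps. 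The map $\mathrm{At}(P)^{\vee,c}\to \mathrm{At}(P_B^\pm)^{\vee,c}$ is surjective with kernel $(\mathfrak g/\mathfrak b)^\vee_{P}$. This kernel maps isomorphically onto the kernel of $\mathfrak g^\vee_P\to\mathfrak b^\vee_{P_B^\pm}$. Intersecting the $+$ and $-$ kernels shows $K\to K_\mathrm{rel}$ is an isomorphism, so $\partial=0$ and $\Omega^c\hookrightarrow I$.

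\textbf{Step 2: reducing the coequalizer to a map from $\mathcal O_U$.} By functoriality, both compositions $\Omega^c\to\mathrm{At}(\alpha P_B^+)^{\vee,c}\to I$ and $\Omega^c\to\mathrm{At}((w\cdot\alpha)P_B^-)^{\vee,c}\to I$ equal the canonical inclusion from Step 1. Since $c_\alpha$ is under $\Omega^c$, the difference $\rho^+_\alpha-\rho^-_{w\cdot\alpha}\circ c_\alpha$ vanishes on $\Omega^c$. It therefore factors through a map $\lambda:\mathcal O_U=\mathrm{At}(\alpha P_B^+)^{\vee,c}/\Omega^c\to I|_U$. Its composite with $I|_U\to I_\mathrm{rel}|_U$ is the relative difference $\lambda_\mathrm{rel}=\rho^+_{\alpha,\mathrm{rel}}-\rho^-_{w\cdot\alpha,\mathrm{rel}}$. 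Hence $D=I|_U/\mathrm{im}\,\lambda$ and $D_\mathrm{rel}|_U=I_\mathrm{rel}|_U/\mathrm{im}\,\lambda_\mathrm{rel}$. A further snake lemma shows that $\Omega^c\to D$ is injective if and only if $\lambda(J)=0$, where $J=\ker\lambda_\mathrm{rel}\subseteq\mathcal O_U$. By Step 1, $\lambda(J)$ is automatically a subsheaf of $\Omega^c$.

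\textbf{Step 3 (the main point): $\lambda(J)=0$.} Let $U'$ be the dense open locus, pulled back along $q$, where the two $B$-structures are opposed. There they refine to a $T$-structure $P_T$, and the isomorphism $\alpha P_B^+\cong (w\cdot\alpha)P_B^-$ is induced from $P_T$. Since $c_\alpha$ extends $\mathrm{At}(h)$, both $\rho^+_\alpha$ and $\rho^-_{w\cdot\alpha}\circ c_\alpha$ factor through $\mathrm{At}(P_T)^{\vee,c}$ compatibly, by the absolute version of the commutative triangle in the proof of Proposition 5.4. So $\lambda|_{U'\cap U}=0$.

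I expect the main obstacle to be density. Because $U'$ is pulled back from $T[[z]]$ and both branches $T_{f\pm}[[z]]$ map isomorphically to $T[[z]]$, $U'\cap U$ is dense in each irreducible component of $U$. Thus $\lambda(J)$ is a subsheaf of the line bundle $\Omega^c$ on the reduced Noetherian scheme $U$ whose sections vanish on a dense open. Since $\Omega^c$ is torsion-free in this sense, $\lambda(J)=0$. This gives injectivity of $\Omega^c\to D$, hence the exact sequence and the vector bundle property.
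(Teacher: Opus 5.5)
Your overall route works and differs from the paper's, but Step 1 as written does not prove its claim. Write $\pi\colon\mathrm{At}(P)^{\vee,c}\to\mathfrak g^\vee_P$, with kernel $\Omega^c$. Knowing that $\pi$ is injective on $\ker^+$ and on $\ker^-$ only gives injectivity of $K\to K_{\mathrm{rel}}$. Surjectivity needs $\pi(\ker^+\cap\ker^-)=\pi(\ker^+)\cap\pi(\ker^-)$. Given that injectivity, this holds if and only if $\Omega^c\cap(\ker^++\ker^-)=0$. But that intersection is exactly $\ker(\Omega^c\to I)$, so ``intersecting the kernels'' presupposes $\partial=0$.

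This is not a formality. At a point $x$ of $T_f[[z]]$ where the two $B$-structures meet with different first-order jets in the $z$-direction, the fibre of $I$ is $\mathfrak b^\vee$ and $\Omega^c|_{k(x)}\to I|_{k(x)}$ is zero. What saves the sheaf-level statement is that $K_{\mathrm{rel}}=0$. On the dense open $U'$ the $B$-structures are opposed, so $\mathfrak b_{P_B^+}+\mathfrak b_{P_B^-}=\mathfrak g_P$ there and $K_{\mathrm{rel}}|_{U'}=0$. Moreover $K_{\mathrm{rel}}$ is a subsheaf of the vector bundle $\mathfrak g^\vee_P$ on the reduced scheme $T_f[[z]]$, so $K=K_{\mathrm{rel}}=0$ and $\partial=0$. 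This is the same density and torsion-freeness argument you use in Step 3.

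Step 3 has a smaller imprecision. Deducing $\lambda|_{U'}=0$ from agreement after composing to $\mathrm{At}(P_T)^{\vee,c}$ requires $I|_{U'}\to\mathrm{At}(P_T)^{\vee,c}$ to be injective. It is in fact an isomorphism, by the five lemma from the corrected Step 1 and $I_{\mathrm{rel}}|_{U'}\cong\mathfrak t^\vee_{P_T}$. Alternatively, just use $\lambda(J)\subseteq\Omega^c$ together with injectivity of $\Omega^c\to\mathrm{At}(P_T)^{\vee,c}$.

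Once repaired, Steps 1--3 collapse to one observation. $\ker(\Omega^c\to D)$ is a subsheaf of the line bundle $\Omega^c$ on the reduced scheme $U$. It vanishes on the dense open $U'$, because $\Omega^c\to D\to\mathrm{At}(P_T)^{\vee,c}$ is injective there. Hence it is zero, and $D$ is an extension of $D_{\mathrm{rel}}|_U$ by $\Omega^c$.

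The paper never proves sheaf-level injectivity directly. It uses the map $D\to\mathrm{At}(P_T)^{\vee,c}$ on $U'$ only to get generic rank $\dim\mathfrak t+1$. Upper semicontinuity then gives the lower bound on fibre dimension everywhere on $U$. The right-exact fibre sequence $\Omega^c|_{k(x)}\to D|_{k(x)}\to D_{\mathrm{rel}}|_{k(x)}\to 0$ gives the upper bound, and constant fibre dimension on a reduced Noetherian scheme finishes. Your version trades this rank count for torsion-freeness of $\Omega^c$ plus the fact that an extension of vector bundles is a vector bundle. It yields sheaf-level exactness first, with fibrewise exactness following by local splitting. Both arguments rest on the same inputs: density of $U'$ in each branch of $T_f[[z]]$, and $c_\alpha|_{U'}=\mathrm{At}(h)$.
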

\begin{proof}
    Let $U'$ denote the dense open subset of $U$ where the two $B$-structures are opposed. Over $U'$ we obtain a $T$-structure $P_T$ on $P$ such that the bundle obtained by inducing up against $T\subset B$ identifies with $P_B^+$ and inducing up against $T\subset wBw^{-1}$ gives $P_B^-$. In particular we obtain a commutative diagram
    \[ 
\begin{tikzcd}
                                   & \mathrm{At}(P)^{\vee,c }\arrow[ld] \arrow[rd] &                                    \\
\mathrm{At}(P_B^+)^{\vee,c} \arrow[rd] &                                           & \mathrm{At}(P_B^-)^{\vee,c} \arrow[ld] \\
                                   & \mathrm{At}(P_T)^{\vee,c}                     &                                   
\end{tikzcd}\]
    which gives a map $I\to \mathrm{At}(P_T)^{\vee,c}$ over $U'$. Since the  map  $I\to \mathrm{At}(P_T)^{\vee,c}$ coequalizes $\rho_\alpha^+$ and $\rho_{w\cdot \alpha}^- \circ c_{\alpha}$, we obtain a map $D\to \mathrm{At}(P_T)^{\vee,c}$ over $U'$. By construction of the dual Atiyah bundle, the map $\Omega^c\to \mathrm{At}(P_T)^\vee$ is injective on fibers. So the map $\Omega^c \to D$ is also injective on fibers over $U'$ and the sequence $0\to \Omega_{U/T_f}^c\to D\to D_\mathrm{rel}|_U\to 0$ is exact on $U'$. In particular it follows that $D$ has rank $\text{dim } \mathfrak{t}+1$ generically. Since $D$ is coherent and $T_f[[z]]$ is Noetherian and reduced, the rank of $D$ can only increase under specialization. So for every point of $U$, the rank of $D$ is at least $\text{dim }\mathfrak{t}+1$.

    Let $x$ be a point of $U$. Since the fiber of the cokernel is the cokernel of the fibers we obtain an exact sequence
    \[ \Omega_{U/T_f}^c|_{k(x)}\to D|_{k(x)}\to D_\mathrm{rel}|_{k(x)}\to 0\]
    Since $D$ has fibers of dimension at least $\text{dim }\mathfrak{t}+1$, $\Omega_{U/T_f}^c|_{k(x)}$ has dimension $1$ and $D_\mathrm{rel}|_{k(x)}$ has dimension $\text{dim }\mathfrak{t}$, it follows that $D|_{k(x)}$ has dimension exactly $\text{dim }\mathfrak{t}+1$ and that the above sequence is also exact to the left. Since $D$ is a coherent sheaf and $T_f[[z]]$ is reduced and Noetherian, it follows that $D$ vector bundle. Since the sequence of vector bundles is exact on fibers, it is a short exact sequence of vector bundles.
\end{proof}
This gives the desired fiberwise dense open subset $U\subseteq T_f[[z]]$ and quotient bundle $\mathrm{At}(P)^{\vee, c}\to D$ such that $\Omega^c\to D$ is a subvector bundle. The last thing to verify to prove Proposition 4.6 is that our given absolute Higgs field $v$ is in the kernel of the map $H^0(T_f[[z]],\mathrm{At}(P)^{\vee,c}\otimes \Omega_{T_f[[z]]/T_f}^c)\to H^0(U,D\otimes \Omega_{T_f[[z]]/T_f}^c)$. 

\begin{mprop}
The absolute Higgs field $v$ is in the kernel of the map
\[H^0(T_f[[z]],\mathrm{At}(P)^{\vee,c} \otimes \Omega_{T_f[[z]]/T_f}^c)\to H^0(U,D \otimes \Omega_{T_f[[z]]/T_f}^c).\]
\end{mprop}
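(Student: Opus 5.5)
The plan is to check the vanishing of the image of $v$ separately on the two branches $U_+$ and $U_-$ of $U$, and then glue, using that $U$ is reduced and is "covered" by its two branches in the sense that a function vanishing on both vanishes. The basic structural input is that the quotient $\mathrm{At}(P)^{\vee,c}|_U\to D$ factors as
\[
\mathrm{At}(P)^{\vee,c}\to \mathrm{At}(P_B^\pm)^{\vee,c}\to I\to D,
\]
for either choice of sign, by the definition of $I$ as a pushout and of $D$ as a coequalizer quotient of $I|_U$.

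First I will restrict to the branch $U_+=U\times_{T_f}T_{f+}$. Pullback along $i_+$ is right exact, so $i_+^*$ of the composite $\mathrm{At}(P)^{\vee,c}\to\mathrm{At}(P_B^+)^{\vee,c}\to D$ is the corresponding composite of the pulled-back sheaves. Tensoring with the line bundle $\Omega^c$ is harmless here. Hence the restriction to $U_+$ of the image $\bar v\in H^0(U,D\otimes\Omega^c_{U/T_f})$ is the image of $i_+^*v$ under
\[
H^0\bigl(T_{f+}[[z]],\mathrm{At}(P)^{\vee,c}\otimes\Omega^c\bigr)\to H^0\bigl(T_{f+}[[z]],\mathrm{At}(P_B^+)^{\vee,c}\otimes\Omega^c\bigr),
\]
restricted to $U_+$ and then pushed into $D$. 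The hypothesis of Section 4.5 says exactly that $i_+^*v$ dies under this map, so $\bar v|_{U_+}=0$. The same argument, using the factorization through $\mathrm{At}(P_B^-)^{\vee,c}$, gives $\bar v|_{U_-}=0$.

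Next I will glue the two vanishings. By Proposition 5.7, $D\otimes\Omega^c$ is a vector bundle on $U$. Working Zariski locally on an affine open $\mathrm{Spec}\,A\subseteq U$ where it is free, $\bar v$ becomes a tuple of functions in $A$, each lying in $xA\cap(x-f)A$. So it suffices to show that $xA\cap (x-f)A=0$. For this I use the injection
\[
R[x]/x(x-f)\hookrightarrow R\times R,
\]
which is injective because $f\neq 0$ and $R$ is a domain. It stays injective after passing to $(-)[[z]]$, since that is coefficientwise. It then stays injective after passing to the open subscheme $\mathrm{Spec}\,A$, since localization is flat. An element of $xA\cap(x-f)A$ has both components zero under this map, so the intersection vanishes. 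Equivalently, $U$ is reduced and set-theoretically the union of the closed subschemes $U_+$ and $U_-$, so a section of a vector bundle vanishing on both is zero.

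The step requiring the most care is this gluing. One must make sure that the open $U$, which is pulled back from $T[[z]]$ via $q$, still embeds into $U_+\sqcup U_-$ at the level of functions, since the two branches meet in the special fibre. I expect the flatness and localization argument above to handle this. The remaining compatibility of the restriction maps with the factorization through $I$ is formal.
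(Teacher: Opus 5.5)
Your proposal is correct and follows the paper's proof. You factor the map through $\mathrm{At}(P_B^\pm)^{\vee,c}$ to kill the image of $v$ on each branch $U_\pm$, then conclude it vanishes on $U$. Your gluing via the injection $A\hookrightarrow A/xA\times A/(x-f)A$ (from $R[x]/x(x-f)\hookrightarrow R\times R$ and flatness of the open immersion) is an explicit version of the paper's appeal to reducedness of $T_f[[z]]$ together with $U=U_+\cup U_-$ set-theoretically.
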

\begin{proof}
    By construction of $D$, the map factors as
    \[H^0(T_f[[z]],\mathrm{At}(P)^{\vee,c} \otimes \Omega_{T_f[[z]]/T_f}^c)\to H^0(T_f[[z]],\mathrm{At}(P_B^+)^{\vee,c} \otimes \Omega_{T_f[[z]]/T_f}^c) \to H^0(U,D \otimes \Omega_{T_f[[z]]/T_f}^c)\]
    By assumption $i_+^*v$ is in the kernel of the first map after restricting to $T_{f+}[[z]]$. Hence the image of $v$ in $H^0(U,D \otimes \Omega_{T_f[[z]]/T_f}^c)$ is fiberwise $0$ on $T_{f+}[[z]]$. Similarly by construction of $D$, the map factors as
    \[H^0(T_f[[z]],\mathrm{At}(P)^{\vee,c} \otimes \Omega_{T_f[[z]]/T_f}^c)\to H^0(T_f[[z]],\mathrm{At}(P_B^-)^{\vee,c} \otimes \Omega_{T_f[[z]]/T_f}^c) \to H^0(U,D \otimes \Omega_{T_f[[z]]/T_f}^c).\]
    Again by assumption $i_-^*v$ is in the kernel of the first map after restricting to $T_{f-}[[z]]$. Hence the image of $v$ in $H^0(U,D \otimes \Omega_{T_f[[z]]/T_f}^c)$ is fiberwise $0$ on $T_{f-}[[z]]$. Thus the image of $v$ is a global section of $D\otimes \Omega_{U/T_f}^c$ which vanishes in each fiber over $T_f[[z]]$. Since $T_f[[z]]$ is reduced, this implies that the image of $v$ is $0$.
\end{proof}
This finishes the proof of Proposition 4.6 and hence also Proposition 4.3. Thus we conclude Proposition 1.1.

\begin{proof}[Proof of Corollary 1.2]
    Since non-characteristicness can be checked smooth locally on $\mathrm{Bun}_G$, it suffices to consider $f^*\mathcal{H}om(\mathcal{F},\mathcal{G})$ and $f^*(\mathcal{F}\otimes \mathcal{G})$ for any smooth morphism $f:U\to \mathrm{Bun}_G$ with $U$ a scheme finite type over $k$. Since $f$ is smooth $f^!(-)=f^*(-)\otimes f^!(1)$ and by the projection formula
    \begin{align*}
        f^*\mathcal{H}om(\mathcal{F},\mathcal{G})&=f^!\mathcal{H}om(\mathcal{F},\mathcal{G}) \otimes f^!(1)^{\otimes(-1)}\\
        &=
        \mathcal{H}om(f^*\mathcal{F}, f^*\mathcal{G}\otimes f^!(1))\otimes f^!(1)^{\otimes(-1)}=\mathcal{H}om(f^*\mathcal{F},f^*\mathcal{G}).
    \end{align*}
    By symmetric monoidality $f^*(\mathcal{F}\otimes \mathcal{G})=f^*\mathcal{F}\otimes f^*\mathcal{G}$. Now $U$ is a smooth variety over $k$, hence it follows by Kashiwara and Schapira's estimate for the singular support of $\mathcal{H}om(-,-)$ and $-\otimes-$ and Proposition 1.1 that these sheaves on $U$ are non-characteristic over $\mathcal{M}_g$. 
\end{proof}

\section{Counterexamples for Higher Rank Groups}

In this section we will construct a family of curves $X\to T=\operatorname{Spec} R$ of genus $4$, where $R$ is a discrete valuation ring, and an absolute Higgs field $v$ on a $\mathrm{GL}_3$-bundle $P$ over $X$, which on the generic fiber is the sum of two nilpotent absolute Higgs fields and on the special fiber is given by a non-zero element in the subspace
\[H^0(X_s,\Omega_{X_s}^{\otimes 2})\to H^0(X_s,\mathrm{At}(P|_{X_s})^\vee\otimes \Omega_{X_s}).\]
Thus providing a counterexample to the analogue of Proposition 1.1 in this case. The example immediately generalizes to $\mathrm{GL}_n$ for $n\ge 3$ and a curve of genus $g\ge 4$.

Let $t\in R$ be a uniformizer, $k$ be the residue field, and $K$ be the field of fractions.

We start by providing a formal counterexample, which we will then algebraize later.

\subsection{A formal counterexample}

We want to find an example of an absolute formal Higgs field $v$ over the trivial $\mathrm{GL}_3$-bundle $P_G$ on $\operatorname{Spec } R[[z]]$, which when restricted to the generic fiber $R[[z]]\otimes_R K$ is a sum of two nilpotent absolute Higgs field and when restricted to the special fiber $k[[z]]$ is a non-zero element in the image of the map
\[H^0(\operatorname{Spec } k[[z]],\Omega^{c}_{k[[z]]/k})\to H^0(\operatorname{Spec } k[[z]],\mathrm{At}(P_G)^{\vee,c}).\]
In the formal setting we will refer to elements of $H^0(\operatorname{Spec }R[[z]],\mathrm{At}(P_G)^{\vee,c})$ as absolute Higgs fields, since we have a canonical choice of trivialization of the sheaf of differential forms this difference does not matter.

Since $P_G$ is the trivial $\mathrm{GL}_3$-bundle, the dual Atiyah bundle splits as $\mathrm{At}(P_G)^{\vee,c}=\mathfrak{gl}_3^\vee\oplus \Omega^c$, where $\mathfrak{gl}_3^\vee$ is considered as a trivial vector bundle over $R[[z]]$. Let $v\in H^0(\operatorname{Spec } R[[z]],\mathrm{At}(P_G)^{\vee,c})$ be the absolute Higgs field given by the pair $(N,\omega)$ where $N\in H^0(\operatorname{Spec } R[[z]],\mathfrak{gl}_3^\vee)$ is given by the matrix
\[N=\begin{pmatrix}
    0&-tz(z-1)&t^2z(z-1)\\
    0&0&tz(z-1)\\
    0&0&0
\end{pmatrix}\]
and $\omega\in H^0(T[[z]],\Omega^c)$ is the differential from $-dz$. Clearly $N\equiv 0 \text{ mod }t $ and $-dz\not\equiv 0 \text{ mod }t$, so the only thing to prove is that $(N,\omega)$ is generically a sum of two nilpotent absolute Higgs fields. To do this consider the pairs $(N_0,\omega_0)$ and $(N_1,\omega_1)$ defined over $\operatorname{Spec }R[[z]]\otimes_R K$, where $N_0$ is given by the matrix
\[N_0=\begin{pmatrix}
    0&0&0\\
    t^{-1}(z-1)&0&0\\
    -t^{-2}&t^{-1}z&0
\end{pmatrix}\]
and $\omega_0=0$, and $N_1$ is given by the matrix
\[N_1=\begin{pmatrix}
    0&-tz(z-1)&t^2z(z-1)\\
    -t^{-1}(z-1)&0&tz(z-1)\\
    t^{-2}&-t^{-1}z&0
\end{pmatrix}\]
and $\omega_1=-dz$. It is clear that $(N,\omega)=(N_0,\omega_0)+(N_1,\omega_1)$ over $\operatorname{Spec } R[[z]]\otimes_R K$. It remains to show that $(N_0,\omega_0)$ and $(N_1,\omega_1)$ are nilpotent as absolute Higgs fields. To see that $(N_0,\omega_0)$ is nilpotent note that it is killed by the standard $B$-structure on the trivial $\mathrm{GL}_3$-bundle, where $B$ is the Borel subgroup  of upper triangular matrices, so it is nilpotent. To see that $(N_1,\omega_1)$ is nilpotent, we will show that it is killed by the $B$-structure given by translating the standard $B$-structure by the matrix
\[A=\begin{pmatrix}
    1&0&0\\
    t&1&0\\
    t^2z&t&1
\end{pmatrix}\in \mathrm{GL}_3(R[[z]]).\]
We have to determine the image of $(N_1,\omega_1)$ under the map
\[ H^0(\mathrm{Spec } R[[z]],\mathrm{At}(P_G)^{\vee,c})\to  H^0(\mathrm{Spec } R[[z]],\mathrm{At}(P_G)^{\vee,c})\] 
induced by $A$ on the dual Atiyah bundles. This map sends a pair $(N_1,\omega_1)$ to the pair given by  $(\mathrm{ad}^\vee(A) N_1, \omega_1+m_A(N_1))$, where $\mathrm{ad}^\vee(A):\mathfrak{gl}_3^\vee\to \mathfrak{gl}_3^\vee $ is the coadjoint action by $A$ and $m_A:\mathfrak{gl}_3^\vee \to H^0(T[[z]],\Omega^c)$ is given by pulling back the invariant differential forms along the map $A:T[[z]]\to \mathrm{GL}_3$. 

The image of $N_1$ under the coadjoint action by $A$ is given by the 
matrix
\begin{align*}
    \mathrm{ad}^\vee(A)N_1=&\begin{pmatrix}
    1&-t&-t^2z+t^2\\
    0&1&-t\\
    0&0&1
\end{pmatrix}\begin{pmatrix}
    0&-tz(z-1)&t^2z(z-1)\\
    -t^{-1}(z-1)&0&tz(z-1)\\
    t^{-2}&-t^{-1}z&0
\end{pmatrix}\begin{pmatrix}
    1&t&t^2z\\
    0&1&t\\
    0&0&1
\end{pmatrix}\\
=&\begin{pmatrix}
    0& 0&0\\
    -t^{-1}z&z&tz(z-1)\\
    t^{-2}&-t^{-1}z&0
\end{pmatrix}\begin{pmatrix}
    1&t&t^2z\\
    0&1&t\\
    0&0&1
\end{pmatrix}\\
=&\begin{pmatrix}
    0&0&0\\
    -t^{-1}z&0&0\\
    t^{-2} & -t^{-1}(z-1)&0
\end{pmatrix}
\end{align*}
This matrix is killed by the standard $B$-structure, so it only remains to show that $\omega_1+m_A(N_1)=0$ or equivalently that $m_A(N_1)=dz$. Let $M$ denote the matrix entry coordinates on $\mathrm{GL}_3$, the left invariant $1$-forms on $\mathrm{GL}_3$ is given by the matrix entries of $M^{-1}dM$, thus after pulling back along the map $A:T[[z]]\to \mathrm{GL}_3$, we are interested in the matrix entries of 
\[A^{-1}dA=\begin{pmatrix}
    1&0&0\\
    -t&1&0\\
    -t^2z+t^2&-t&1
\end{pmatrix}\begin{pmatrix}
    0&0&0\\
    0&0&0\\
    t^2dz&0&0
\end{pmatrix}=\begin{pmatrix}
    0&0&0\\
    0&0&0\\
    t^2dz&0&0
\end{pmatrix}.\]
So we see that $m_A(N_1)=t^{-2}t^2dz=dz$.

\subsection{Algebraizing the formal collision}

We wish to upgrade the example of a characteristic formal collision of nilpotent absolute Higgs fields to a collision over a smooth proper family of curves. To do this, let $X$ be a family of hyperelliptic curves of genus 4 over $\mathrm{Spec } R$. The property of such families that will be important to us is that the canonical morphism of $X$ factors as a finite morphism $f:X\to \mathbb{P}^1_R$ followed by the degree $3$ Veronese embedding $\mathbb{P}^1_R\to \mathbb{P}^3_R$. In other words, $f$ comes with an isomorphism $f^*\mathcal{O}_{\mathbb{P}^1_R}(3)\cong \Omega_{X/T}^1$.

The strategy is now the following, produce a $\mathrm{GL}_3$-bundle $V$ over $\mathbb{P}^1_R$ and an element $v\in H^0(\mathbb{P}^1_R,\mathrm{At}(V)^\vee\otimes \mathcal{O}(3))$ such that $v=v_0+v_1$ over the generic fiber $\mathbb{P}^1_K$, where $v_0,v_1\in H^0(\mathbb{P}^1_K,\mathrm{At}(V)^\vee\otimes \mathcal{O}(3))$ are nilpotent, and over the special fiber $v$ is a non-zero element in the subspace $H^0(\mathbb{P}^1_k,\Omega_{\mathbb{P}^1_k/k}\otimes \mathcal{O}(3))$. Pulling back along $f$ and using the identification $f^*\mathcal{O}(3)\cong \Omega_{X/T}$ this data gives rise to an absolute Higgs field over $X$ which is generically the sum of two nilpotent Higgs fields and over the special fiber is given by a non-zero characteristic vector. 

Over $\mathbb{P}^1_R$ the constructions will live on the rank $3$ bundle $V=\mathcal{O}(-1)\oplus \mathcal{O}^2$. The maps from $V^\vee$ to $V^\vee(3)= V^\vee\otimes \mathcal{O}(3)$ are given by matrices of the from
\[\begin{pmatrix}
    \mathcal{O}(3)& \mathcal{O}(4) &\mathcal{O}(4)\\
    \mathcal{O}(2) & \mathcal{O}(3)& \mathcal{O}(3)\\
    \mathcal{O}(2) &\mathcal{O}(3)&\mathcal{O}(3)
\end{pmatrix},\]
where we have ordered the factors such that the first factor is $\mathcal{O}(-1)^\vee=\mathcal{O}(1)$ and the last two factor are $\mathcal{O}^\vee=\mathcal{O}$. Let $z$ denote the coordinate on $\mathbb{P}^1_R$ and let $1,z\in H^0(\mathbb{P}^1_R,\mathcal{O}(1))$ be the corresponding basis. Over the generic fiber $\mathbb{P}^1_K$ consider the map $N_0:V^\vee\to V^\vee(3)$ given by the matrix
\[N_0=\begin{pmatrix}
    0&0&0\\
    t^{-1}(z-1)&0&0\\
    -t^{-2}&t^{-1}z&0
\end{pmatrix}\]
This is nilpotent and killed by the standard flag. Consider also the map $N_1:V^\vee\to V^\vee(3)$ over $\mathbb{P}^1_K$ given by
\[N_1=\begin{pmatrix}
    0&-tz(z-1)&t^2z(z-1)\\
    -t^{-1}(z-1)&0&tz(z-1)\\
    t^{-2}&-t^{-1}z&0
\end{pmatrix}\]
This matrix is also nilpotent and by the same calculation as in the formal setting it is killed by the flag given by translating the standard flag by following automorphism of $V$
\[A=\begin{pmatrix}
    1&0&0\\
    t&1&0\\
    t^2z&t&1
\end{pmatrix}\in \mathrm{Aut}(V).\]
For any flag $F_\bullet$ on $V$ the following diagram is a pullback
\[
\begin{tikzcd}
\mathrm{At}(V)^\vee \otimes \mathcal{O}(3) \arrow[d] \arrow[r] & \mathrm{At}(F_\bullet)^\vee \otimes \mathcal{O}(3) \arrow[d] \\
\mathrm{End}(V)^\vee\otimes \mathcal{O}(3) \arrow[r]           & \mathrm{End}(F_\bullet)^\vee\otimes \mathcal{O}(3)          
\end{tikzcd}\]
Since $N_0$ and $N_1$ are nilpotent, they are killed by a pair of flags and hence we may lift them uniquely to elements of $v_0,v_1\in H^0(\mathbb{P}^1_K,\mathrm{At}(V)^\vee \otimes \mathcal{O}(3))$, which are killed by the same respective flags. Now we have to show that $v_0+v_1$ extends to a section over all of $\mathbb{P}^1_R$, which over the special fiber $\mathbb{P}^1_k$ is a non-zero vector in the subspace $H^0(\mathbb{P}^1_k,\Omega_{\mathbb{P}_k^1/k}\otimes \mathcal{O}(3))$.

Consider the cover of $\mathbb{P}^1_R=\operatorname{Spec } R[z]\cup \operatorname{Spec } R[z^{-1}]$ given by the coordinate $z$. It suffices to prove the statement over $\operatorname{Spec} R[z]$ and $\mathrm{Spec } R[z^{-1}]$. Over $\operatorname{Spec } R[z]$ we have a trivialization of $V$ and $N_0$ is killed by the standard flag, so $(N_0,0)\in H^0(\operatorname{Spec} K[z],\mathrm{End}(V)^\vee \oplus \mathcal{O}dz)$ is a nilpotent element lifting $N_0$ so $v_0=(N_0,0)$. On the other hand $N_1$ is killed by the the flag given by translating the standard flag by $A$. Hence it follows by the same calculations as in the formal setting that $v_1=(N_1,-dz)$ is the nilpotent lift of $N_1$. From this it follows that $v=v_0+v_1=(N_0+N_1,-dz)$ over $\operatorname{Spec } K[z]$. This extends to all of $\operatorname{Spec } R[z]$ and on the special fiber it is given by a non-zero differential form. Exactly the same calculation in the coordinate $z^{-1}$ shows that this is true over all of $\mathbb{P}^1_R$.

Let $V_X=f^*V$, pulling back $v$ along the map $f:X\to \mathbb{P}^1_R$ and using the isomorphism $f^*\mathcal{O}(3)\simeq \Omega_{X/T}$, we obtain an element of $H^0(X,\mathrm{At}(V_X)^\vee\otimes f^*\mathcal{O}(3))\cong H^0(X,\mathrm{At}(V_X)^\vee\otimes \Omega^1_{X/T})$. This is an absolute Higgs field on $X$ and generically it is a sum of the pullback of the nilpotent absolute Higgs fields $v_0$ and $v_1$ and on the special fiber it is given by a non-zero element in the image of the map
\[H^0(X_s,\Omega^{\otimes 2}_{X_s})\to H^0(X_s,\mathrm{At}(V_X)^\vee \otimes \Omega_{X_s}).\]
\printbibliography
\end{document}